\DeclareMathAlphabet\mathbfcal{OMS}{cmsy}{b}{n}
\newcommand{\R}{{\mathbb R}}
\newcommand{\N}{{\mathbb N}}
\newcommand{\Poi}[1]{{\mathbfcal{P}(#1)}}
\renewcommand{\d}{{\mathrm d}}
\renewcommand{\div}{{\mathrm{div}}}
\newcommand{\density}[1]{f_{#1}}
\newcommand{\RNderivative}[2]{{\frac{\d{#1}}{\d{#2}}}}
\newcommand{\BBEnergy}{S}
\newcommand{\dom}{D}
\newcommand{\domDelta}{\mathcal{D}}
\newcommand{\domDeltaHalf}{D_{\delta/2}}
\newcommand{\boundDomDelta}{{\partial\domDelta\times\partial\domDelta}}
\newcommand{\momentum}{\eta}
\newcommand{\spacetime}{{[0,T]\times\dom}}
\newcommand{\numParticles}{n}
\newcommand{\numNonScattered}{m}
\newcommand{\numScattered}{N_s}
\newcommand{\numRegionsA}{n}
\newcommand{\numRegionsB}{m}
\newcommand{\numParticlesBayes}{J}
\newcommand{\norm}[1]{\left\lVert#1\right\rVert}
\newcommand{\absNorm}[1]{\left\lvert#1\right\rvert}
\DeclareMathOperator*{\argmin}{argmin} % no space, limits underneath in displays
\newcommand{\Log}[1]{\log\left(#1\right)}
\newcommand{\baseMeasMeas}{P_{\mathrm{meas}}}
\newcommand{\baseMeasPart}{P_{\mathrm{part}}}
\newcommand{\baseMeasPartCondTimePoints}[1]{P_{\mathrm{part}|#1}}
\newcommand{\baseMeasPartTimePoints}[1]{P_{#1}}
\newcommand{\probMeasPartCondTimePoints}[2]{P_{#1|#2}}
\newcommand{\meas}{{\mathcal M}}
\newcommand{\measp}{{\mathcal M_+}}
\newcommand{\restr}{{\mbox{\LARGE$\llcorner$}}}
\newcommand\funcRestr[2]{{% we make the whole thing an ordinary symbol
		\left.\kern-\nulldelimiterspace % automatically resize the bar with \right
		#1 % the function
		\vphantom{\big|} % pretend it's a little taller at normal size
		\right|_{#2} % this is the delimiter
}}
\newcommand{\pushforward}[2]{{{#1}_{\#}#2}}
\newcommand{\hd}{\mathcal{H}}
\newcommand{\weakstarto}{{\stackrel*\rightharpoonup}}
\newcommand{\halflife}{{T_{1/2}}}
\newcommand{\halflifeHat}{{\hat{T}_{1/2}}}
\newcommand{\att}{{\mathrm a}}
\newcommand{\sct}{{\mathrm s}}
\newcommand{\dt}{{\mathrm d}}
\newcommand{\Aq}[1]{A^{#1}}
\newcommand{\discrete}{{\mathrm dscrt}}
\newcommand{\cont}{{\mathrm cont}}
\newcommand{\vol}{{\mathrm{vol}}}
\newcommand{\Mv}{\mathrm{Mv}}
\newcommand{\eval}{\mathrm{eval}}
\newcommand{\reconMap}{\mathrm{rec}}
\newcommand{\recon}{R}
\newcommand{\quant}{Q}
\theoremstyle{plain}
\newtheorem{theorem}{Theorem}[section]
\newtheorem{proposition}[theorem]{Proposition}
\newtheorem{lemma}[theorem]{Lemma}
\newtheorem{corollary}[theorem]{Corollary}
\newtheorem{remark}[theorem]{Remark}
\newcommand{\notinclude}[1]{}
\title{A Bayesian model for dynamic mass reconstruction from PET listmode data}
\author{Marco Mauritz \and Bernhard Schmitzer \and Benedikt Wirth}
\begin{document}
\maketitle

\begin{abstract}
Positron emission tomography (PET) is a classical imaging technique to reconstruct the mass distribution of a radioactive material.
If the mass distribution is static, this essentially leads to inversion of the X-ray transform.
However, if the mass distribution changes temporally, the measurement signals received over time (the so-called listmode data) belong to different spatial configurations.
We suggest and analyse a Bayesian approach to solve this dynamic inverse problem that is based on optimal transport regularization of the temporally changing mass distribution.
Our focus lies on a rigorous derivation of the Bayesian model and the analysis of its properties,
treating both the continuous as well as the discrete (finitely many detectors and time binning) setting.
\end{abstract}

\section{Introduction}
A typical procedure in positron emission tomography (PET) is to inject a radioactive tracer into a patient or a lab animal.
This tracer then binds to molecules, cells, or tissue of interest, and reconstructing the tracer distribution provides information about the distribution of molecules, cells, or tissue.
Another widespread approach is to directly feed leukocytes with radioactive material and to subsequently inject those, then PET allows to follow the leukocyte distribution over time.\\
The principle of PET is as follows:
Each time $t$ a radioactive atom decays, a pair of photons is emitted in opposite directions.
This photon pair is then detected at two locations $a,b\in\partial \domDelta$ by detectors sitting on the boundary $\partial \domDelta$ of the PET scanning device $\domDelta\subset\R^3$.
The PET measurement thus consists of a list $E=(t_k,a_k,b_k)_{k=1,\ldots,K}\in(\R\times\partial \domDelta\times\partial \domDelta)^K$ of such triples,
so-called \emph{listmode data}, indicating that at time $t_k$ a radioactive decay happened on the line segment connecting $a_k$ and $b_k$, the so-called \emph{line of response}.

If the radioactive mass distribution is static, lines of response intersect in exactly those places where the radioactive material sits.
However, if it changes over time, for instance if the radioactively labelled leukocytes travel within the body,
then the lines of response are all induced by decays at different positions, and a temporal regularization becomes necessary to connect the information from the different lines of response.

\subsection{Contribution of the article}
For reconstructing a temporally changing radioactive mass distribution $\rho\in\measp(\spacetime)$
(a nonnegative Radon measure living on the time interval $[0,T]$ and the convex compact PET scanning area $\dom\subset \domDelta$),
in previous work \cite{ScScWi20} we proposed to minimize the functional
\begin{equation}\label{eqn:basicFunctional}
J^{E,q}(\rho,\momentum)=\norm{A\rho}-\int_{[0,T]\times\partial \domDelta\times\partial \domDelta}\log\left(\RNderivative{\Aq{\rho}}{\nu}\right)\d E+\beta\BBEnergy(\rho,\momentum)
\end{equation}
(note that in this notation we interpret the list $E$ as the linear combination $\sum_{k=1}^K\delta_{(t_k,a_k,b_k)}$ of Dirac measures).
Here, $A$ is the linear forward operator, mapping a mass distribution $\rho$ to an expected photon pair intensity on $\R\times\partial \domDelta\times\partial \domDelta$,
and $\RNderivative{\Aq{\rho}}{\nu}$ is the Radon-Nikodym derivative of $A^q$ (which is a particular modification of $A$ with a parameter $q>0$) with respect to a suitable measure $\nu$.  The modification becomes necessary since photons can sometimes be scattered
and thereby lead to incorrect lines of response among the measurements.
The auxiliary variable $\momentum\in\meas(\spacetime)^3$ is an $\R^3$-valued Radon measure.
It has the interpretation of the physical momentum associated with the motion of the mass $\rho$
and therefore satisfies the \emph{continuity equation}
\begin{equation}\label{eqn:continuity}
\partial_t\rho+\div\momentum=0
\end{equation}
on $(0,T)\times\R^3$ in the distributional sense,
which is known \cite[Lemma 1.1.2]{chizat_unbalancedOT} to guarantee the disintegration
\begin{equation}
\rho=\d t\otimes\rho_t
\end{equation}
into the temporal Lebesgue measure $\d t$ and time slices $\rho_t\in\measp(\dom)$.
Finally, the parameter $\beta>0$ is a regularization weight,
and $\BBEnergy$ is the so-called Benamou--Brenier functional \cite{BeBr00}
\begin{equation}
\BBEnergy(\rho,\momentum)=
\begin{cases}
\int_0^T\int_\dom\left(\frac{\d\momentum_t}{\d\rho_t}\right)^2\,\d\rho_t\,\d t&\text{if $\rho\geq0$ and \eqref{eqn:continuity} holds,}\\
\infty&\text{else.}
\end{cases}
\end{equation}
The Benamou--Brenier functional is a dynamic formulation of optimal transport:
Minimizing it under the constraint $\rho_0=\mu$, $\rho_T=\nu$ for two nonnegative measures $\mu,\nu\in\measp(\dom)$ on $\dom$ of equal mass
yields exactly $T^{-1}$ times the squared Wasserstein-2 distance between $\mu$ and $\nu$.

While in \cite{ScScWi20} we mainly described the functional and numerically confirmed its efficacy for reconstructing temporally moving mass distributions,
the aim of the current article is to rigorously derive and analyse the functional.
Our contributions are the following.
\begin{itemize}
\item
In \crefrange{sec:BayesianAnsatz}{sec:posterior} we rigorously derive a functional $\hat J^E$ (of which $J^{E,q}$ will be a modification)
as the negative logarithm of a Bayesian posterior density
and thereby interpret its minimizer as a maximum a postiori (MAP) estimate for the reconstruction.
In contrast to other approaches in the literature we do not restrict the Bayesian approach to the discrete setting, in which detectors and time measurements have a finite resolution,
but directly apply it to the continuum limit, which for modern detector sizes and temporal resolution is a good approximation
and which is more interesting and significant from the viewpoint of resolution independence.
The difficulty here is that the Bayesian approach involves probability densities,
however, on infinite-dimensional spaces there is no canonical probability distribution with respect to which the density can be expressed. To circumvent this, we aim for reconstructing finitely many time marginals only and provide corresponding probability distributions; note, though, that the negative log-posterior $\hat J^E$ will turn out to be independent of those.
The discrete setting can then be obtained with small modifications as a special variant of the continuous setting.
\item
The minimizer of $\hat J^E$ will have the flaw that it misinterprets incorrect lines of response (arising from photon scattering) as correct ones.
Actually, MAP estimates are notorious for such behaviour.
In \crefrange{sec:bias}{sec:biasRemoval} we introduce a remedy by modifying the functional $\hat J^E$ to $J^{E,q}$,
which contains an additional tuning parameter $q>0$.
The functional $J^{E,q}$ is derived as a convex relaxation of a more elaborate MAP estimate of mixed integer type
which in addition to $\rho$ and $\momentum$ also tries to estimate which measurements were produced by photon scattering.
We illustrate in a reduced toy model the influence of parameter the $q$, yielding heuristics for its choice depending either on the detector size or the so-called positron range.
We furthermore justify the introduction of $q$ in \cref{thm:debiasing,thm:pProperties}
by interpreting it as a Lagrange multiplier and relating it to the estimate of measurements from scattered photons.
\item
In \cref{sec:existence} we prove existence of minimizers,
almost surely with respect to the measurement $E$ (which actually is a random variable depending on the ground truth mass distribution $\rho^\dagger\in\measp(\spacetime)$).
\item
In \cref{sec:scaling} we analyse the invariances of the minimization problem under certain parameter changes, which leads to a heuristic for choosing the regularization parameter $\beta$.
Essentially, this is a nondimensionalization of our functional, however, with the complication that the measurement $E$ cannot simply be rescaled (since it is a list of lines of response).
Thus, for a rigorous analysis of the invariances we need to take into account the stochastic nature of the measurements and thus also the functional and its minimizers:
We will show that the law of the minimizers transforms in a specific way under certain parameter scalings.
\item
In \cref{sec:minimizerStructure} we employ the recent result \cite{Bredies_extremePointsBB} to show
that our reconstruction $\rho$ will (almost surely with respect to $E$) represent a finite number of particle trajectories.
\item
In \cref{sec:lifting} we relate our reconstruction approach to the model proposed in \cite{Lee2015} that reconstructs single cell trajectories:
If one extends that model to multiple cell trajectories (which turns it into a complicated mixed integer or combinatorial optimization),
then our functional can be viewed as a convex relaxation.
\end{itemize}

\subsection{Preliminaries and notation}
Let us briefly introduce some notation, part of which we actually already used above.
The Banach space of Radon measures on a compact domain $B$ will be denoted $\meas(B)$ with norm $\norm{\cdot}$, the subset of nonnegative measures by $\measp(B)$.
For two measures $\mu,\nu\in\meas(B)$ with $\mu$ absolutely continuous with respect to $\nu$, the Radon--Nikodym derivative of $\mu$ with respect to $\nu$ is denoted $\RNderivative{\mu}{\nu}$.
The restriction of a measure $\mu$ to some $\mu$-measurable set $S$ is denoted $\mu\restr S$,
and the pushforward of $\mu$ under some $\mu$-measurable map $f$ is denoted $\pushforward{f}\mu$.
By $\mathcal{L}^d$ and $\hd^d$ we denote the $d$-dimensional Lebesgue and Hausdorff measure, where for $d=1$ we may drop the exponent,
and $\delta_a$ denotes the Dirac measure at some point $a$.
Sometimes we will for simplicity also refer to the Lebesgue measure in time by $\d t$.
Furthermore, we will indicate random variables by boldface letters such as $\bm E$
while their realizations have normal font, thus $E=\bm E(\omega)$ for $\omega$ a random element of the standard probability space $(\Omega,\mathcal{F},P)$.
For the densities of such random variables $\bm E$ (or rather their probability distributions) with respect to a base probability measure (that will be fixed in the context),
rather than using Radon--Nikodym derivatives we introduce the specific notation $\density{\bm E}(E)$
and $\density{\bm E}(E|A)$ for the density conditioned on some event $A$
(frequently $A$ will be a specific realization of a random variable, in which case we will just write this realization instead of $A$).
Finally, given a measure $\lambda$, by $\Poi{\lambda}$ we denote the Poisson point process with intensity $\lambda$.
We will only consider $\sigma$-finite intensities on $\R^3$ so that the corresponding Poisson point process is proper and simple and thus can be interpreted as a random set of points
(see \cite{bookPPP_last_penrose, ReissPP} for an introduction into Poisson point processes).

We will further employ the notation $a\lesssim b$ to indicate the existence of an independent constant $c>0$ such that $a\leq cb$
(analogously, $b\gtrsim a$ stands for $a\lesssim b$ and $a\approx b$ for $a\lesssim b$ and $b\lesssim a$).
Finally, we introduce some function spaces. $L^p$, $p\ge1$, denotes the standard $L^p$-space and $C$, $C^1$, $C_c^1$ denotes continuous, continuously differentiable (and compactly supported) functions. $C_\dom([0,T]\times\R^3)$ denotes continuous functions being supported inside $\dom$ at time $0$.
For the reader's convenience below we provide a reference list of further model-specific symbols and quantities frequently used throughout the article.

\setlength\extrarowheight{3pt}
\begin{longtable}{lp{12cm}}
	$A^\att, A^\sct, A^\dt$ & Forward operators describing attenuation, scattering and normal detection. They are either defined on time slices, i.e.\ on $\measp(\dom)$, or on $\measp(\spacetime)$ via $A^{a/c/d}\rho = \d t\otimes A^{a/c/d}\rho_t$, see \cref{sec:forwardOperator}\\
	
	$A, \Aq{q}$ & total forward operator $A = p^\sct A^\sct + p^\dt A^\dt$ and unbiased forward operator $\Aq{q} = qp^\sct A^\sct + p^\dt A^\dt $, see \cref{sec:forwardOperator}, \cref{eqn:finalFunctional}  \\
	
	$\mathcal{C}$ & $\mathcal C=\{(\theta,s)\in S^2\times\R^3\,|\,s\in\pi_{\theta^\perp}(\domDeltaHalf)\}$ with $\pi_{\theta^\perp}$ being the projection onto $\theta^\perp$. The X-Ray (see \cref{definitionXRayTransform}) transform maps onto $L^1(\mathcal C)$\\
	
	$\dom\subset\R^3$ & compact and convex set where the tracer material stays \\
	
	$\domDelta\subset\R^3, \delta$ & compact and convex set such that $\dom\subset\domDelta$ and $\mathrm{dist}(\dom,\partial\domDelta)\ge\delta$ for some $\delta>0$. The detectors are located at the boundary $\partial\domDelta$.\\
	
	$E$, $\absNorm{E}$ & measurement, realization of a Poisson point process $\bm E$ with intensity measure $\frac{1}{\halflife}A\rho^\dagger$. To be interpreted as either a set or equivalently as a discrete empirical measure. $\absNorm{E}$ denotes the number of elements in the set \\
	
	$\eval_t$, $\eval_{t_0,\ldots,t_K}$ & $\eval_{t_0,\ldots,t_K}:C([0,T];D)\to\dom^K, \eval_{t_0,\ldots,t_K}\gamma=(\gamma(t_0),\ldots,\gamma(t_K))$\\
	
	$\density{\Poi{A\rho}}$ & density of the random variable $\Poi{A\rho}$ with respect to a suitable reference measure\\
	
	$G_y(x)$ & smooth compactly supported convolution kernel $G_y:\domDeltaHalf\to[0,\infty)$ ($\mathrm{supp}(G_y)\subset B_{\delta/2}(y)$) describes the probability density of an annihilation of a positron emitted from $y$ with an electron \\
	
	$\Gamma_k\times\Gamma_l\subset\boundDomDelta$, $M$ & $\Gamma_k$, $i=1,\ldots,M$ are the detectors in the discrete setting. For $k\neq l$ we have the detector pairs $\Gamma_k\times\Gamma_l$ where photon pairs are registered\\
	
	$\absNorm{\Gamma_1\times\Gamma_2}$ & measure of the set $\Gamma_1\times\Gamma_2\subset\boundDomDelta$, i.e. $\absNorm{\Gamma_1\times\Gamma_2}=\hd^2\otimes\hd^2(\Gamma_1\times\Gamma_2)$  \\
	
	$\hd^d$ & $d$-dimensional Hausdorff measure \\
	
	$\momentum, \momentum^\dagger\in\meas^3(\spacetime)$         & measures describing the material flux corresponding to the temporal variation of the mass distribution $\rho,\rho^\dagger$     \\ 
	
	$I$ & Identity matrix \\
	
	$\hat J^{E}$ & first reconstruction functional, see (\cref{eqn:DefJhat}) \\
	
	$\bar J^{E,q}$ & reconstruction functional taking into account the bias. Includes Lagrange parameter $q$, see \cref{eqn:max} \\
	
	$J^{E,q}$ & final reconstruction functional, see \cref{eqn:finalFunctional}\\

	$\meas(X)$, $\measp(X)$, $\meas(X)^3$ &  space of (non-negative) Radon measures and three dimensional Radon measures on $X$\\ 
		
	$\absNorm{\cdot}$ & Euclidean norm\\
	
	$P$ & X-ray transform, see \cref{definitionXRayTransform}\\
	
	$P_{\bm E|\bm\theta}$, $P_{\bm E}$, $P_{\bm\theta}$ &(conditional) probability distribution of $\bm E$ (given $\bm \theta$) and of $\theta$ \\
	
	$\baseMeasMeas$, $\baseMeasPart$, $\baseMeasPartCondTimePoints{t_0,\ldots,t_K}$ & base measures on the space of measurements, space of particles and space of particle positions at time points $t_0,\ldots,t_K$ \\
	
	$\Poi{\mu}$ & Poisson point process with intensity measure $\mu$ \\

	$p^\att, p^\sct, p^\dt$ & probabilities for attenuation, scattering and normal detection. It holds $p^\att+p^\sct+p^\dt=1$\\
	
	$\pi_L$	& orthogonal projection onto subspace $L$ \\
	
	$\nu$& $\nu=\dt t\otimes(\hd^2\restr\partial\domDelta)\otimes(\hd^2\restr\partial\domDelta)$. The forward operator $\dt t\otimes A\rho_t$ is absolutely continuous w.r.t. $\nu$. In the discrete case we have $\nu=\sum_{i=1}^N\sum_{j,k=1}^M\delta_{((i-\frac{1}{2})\Delta T, z_j, z_k)}$\\

	$q$, $\Aq{q}$ & Lagrange parameter $q>0$ that weighs the influence of the scatter part of the forward operator. It is $\Aq{q} = qp^\sct A^\sct + p^\dt A^\dt$ (see \cref{eqn:finalFunctional}) \\
	
	$R$ & $R\colon\left(\domDeltaHalf\right)\times S^2\to\boundDomDelta,\;
	R(x,v)=\partial\domDelta\cap(x+\R v)\,,$ is the measurement function that maps a point $x$ (where an annihilation has happened) and a direction $v$ onto the photon pair's detection location. This function is comparable to the classical Radon transform.\\

	$\rho, \rho^\dagger\in\measp(\spacetime)$ &        measures describing tracer/mass distribution in spacetime. $\rho^\dagger$ represents the ground truth distribution     \\   
	
	$S^2$ & Sphere in $\R^3$, i.e.\ $S^2 = \{x\in\R^3 \ | \ \absNorm{x}=1\}$ \\
	
	$\BBEnergy$ & Benamou-Brenier functional \\
	
	$\halflife$ & half-life of the considered radionuclide \\
	
	$[0,T]$ & time interval during which the measurements are taking place\\
	
	$t_1,\ldots,t_K\in[0,T]$, $K$ & time points (containing the time points when a photon pair was detected) for Bayesian inference\\
	
	$\tau_i\subset[0,T]$, $N$ & $\tau_i$, $i=1,\ldots,N$ are the time intervals of the discrete setting\\
	
	$\theta_{t_1,\ldots,t_K}$ & time discrete measure of $\theta\in\measp(C([0,T];D))$, $\theta_{t_1,\ldots,t_K}=\pushforward{\eval_{t_1,\ldots,t_K}}\theta\in\measp(\dom^K)$\\
	
	$\theta^\perp$ & orthogonal complement of $\theta$, i.e.\ $\theta^\perp=\{x \ | \ x\cdot\theta=0\}$ \\

	$Z_v^{\Gamma_k\times\Gamma_l}$ & this is the set of points $\{x\in\domDeltaHalf \ | \ R(x,v)\in\Gamma_k\times\Gamma_l\}$, i.e.\ all points possibly contributing to a detection in detector pair $\Gamma_k\times\Gamma_l$ for a given direction $\nu\in S^2$\\

\end{longtable}

\section{The Bayesian dynamic reconstruction model}\label{sec:reconstructionFunctional}
In this section we derive the negative log-posterior $\hat J^E$ for the reconstruction of a spatiotemporally changing radioactive mass distribution $\rho$.
Actually we will derive the negative log-posterior for a different variable $\theta$ instead of $\rho$.
The description in terms of $\rho$ will then result from a final equivalent reformulation.
In the following we will detail the model and the strategy underlying our Bayesian approach,
after which we provide the (linear) forward operator, the likelihood and the prior distribution to finally arrive at the posterior distribution and the functional $\hat J^E$.
Before, we briefly fix the scanning geometry:
The interior of the PET scanner (the measurement volume) will be denoted by $\dom\subset\R^3$ (a compact and convex domain, see \cref{fig:sketch}). The detections take place on $\partial\domDelta$, where $\dom\subset\domDelta$ with $\text{dist}(\dom,\partial\domDelta)\ge\delta>0$ for some compact and convex set $\domDelta\subset\R^3$ with smooth boundary,
and measurements will be taken over a time interval $[0,T]$.\\

\begin{figure}
	\centering
	\vspace*{\fill} % Vertically center the graphic
	\begin{tikzpicture}
	\node [
	above right,
	inner sep=0] (image) at (3,0) {\includegraphics[width=0.5\textwidth]{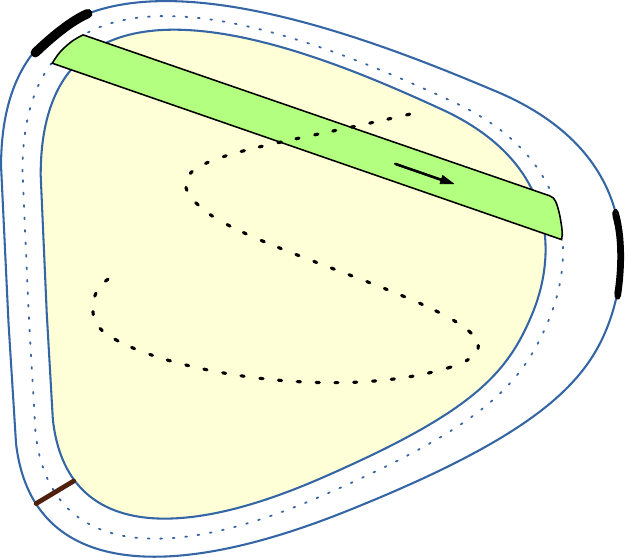}};
	\node at (4.25,5.0){$\dom$};
	\node at (9.,0.75){$\partial\domDelta$};
	\node at (3.25,7.){$\Gamma_i$};
	\node at (11.65,4.25){$\Gamma_j$};
	\node at (7.25,4.2){$\rho_t^\dagger$};
	\node at (3.8,0.7){$\delta$};
	\node at (6.3,5.83){\rotatebox{-21}{$Z_v^{\Gamma_i\times\Gamma_j}$}};
	\node at (8.05,5.25){$v$};
	\end{tikzpicture}\\
	\vspace*{\fill}
	\caption{Two-dimensional sketch of a PET scanning geometry indicating the used notation. The dotted line is the boundary of $\domDeltaHalf$.}
	\label{fig:sketch}
\end{figure}

\begin{remark}[Nonsmooth $\partial\domDelta$]
To improve readability we restrict ourselves to smooth $\partial\domDelta$, even though the extension to arbitrary convex $\domDelta$ is straightforward (and more realistic).
For instance, for piecewise smooth $\partial\domDelta$ one could exploit that the measurement $E$ hits the set of nondifferentiability with probability zero.
The map $g$ in \cref{LemmaForwardDensitySplitting} and thus $\RNderivative{\Aq{\rho}}{\nu}$ in \eqref{eqn:basicFunctional} then will be continuous except on this set
so that our reconstruction functional stays well-defined almost surely.
In the more general case one would have to replace the reference measure $\nu$ from \cref{sec:likelihood} by something more appropriate,
for instance a projection of the Hausdorff measure on the four-dimensional torus $S^2\times S^2$ onto $\boundDomDelta$.
\end{remark}

\subsection{Forward operator}\label{sec:forwardOperator}

We start by introducing the linear forward operator $A\colon\measp(\dom)\to\measp(\boundDomDelta)$ describing the measurement process, i.e. the transformation of radioactive positron decays into photon detections in the detector domain. The forward operator is a weighted sum
\begin{equation*}
A=p^\att A^\att+p^\sct A^\sct+p^\dt A^\dt,
\end{equation*}
where the superscripts stand for the following three possibilities that a photon pair can undergo:
\begin{enumerate}[a)]
	\item \textbf attenuation: The emitted photon pair is not detected, for instance due to absorption.
	\item \textbf scattering: At least one of the photons undergoes substantial scattering, significantly altering its direction.
	\item \textbf detection: The photons undergo at most minor scattering
	before being registered by a pair of detectors.
\end{enumerate}
The parameters $p^\sct,p^\att,p^\dt=1-p^\sct-p^\att\in[0,1]$ denote the probabilities for scattering, attenuation, and scatterless detection, respectively.
For simplicity we assume those probabilities to be spatiotemporally constant,
but they could be replaced by functions of space and time without substantially modifying the approach.
During the remainder of the section we detail the three operators $A^\att$, $A^\sct$, and $A^\dt$.
The forward operator describing attenuation simply discards all intensity,
\begin{equation*}
A^\att\colon\measp(\dom)\to\measp(\boundDomDelta),\quad\rho_t\mapsto0.
\end{equation*}
Concerning scattering, we assume for simplicity that the photon rays are randomly redirected such that the probability of arriving at a point $(a,b)\in\boundDomDelta$ is homogeneous,
\begin{equation*}
A^\sct\colon \measp(\dom) \to \measp(\boundDomDelta),\quad
\rho_t \mapsto \frac{\rho_t(\dom)}{\hd^2(\partial\domDelta)^2}
\cdot
(\hd^2\otimes\hd^2)\restr(\boundDomDelta)
\end{equation*}
(our results could in principle also be extended to spatially inhomogeneous scattering with a correspondingly more elaborate operator $A^\sct$,
as long as the probability density of detecting a scattered photon pair is uniformly bounded away from zero,
that is, $\RNderivative{A^\sct\rho_t}{(\hd^2\otimes\hd^2)\restr(\boundDomDelta)}>\epsilon\rho_t(\dom)$ for some $\epsilon>0$ and any $\rho_t\in\measp(\dom)$).
The forward operator of scatterless detection will finally be modelled as a composition of linear operators
\begin{equation*}
A^\dt=B_{\mathrm{detectors}} B_{\mathrm{lines}} B_{\mathrm{pr}}.
\end{equation*}
The operator $B_{\mathrm{pr}}$ models the so-called \emph{positron range}:
The radioactive decay of an atom does actually not directly lead to photon emission but only produces a positron.
This positron may travel some small distance (depending on the material) before it annihilates with an electron,
which then results in the emission of the photon pair.
Let $G:B_{\delta/2}(0)\to[0,\infty)$ denote the (smooth, compactly supported) probability density of the annihilation location of a positron emitted at the origin,
where $B_r(x)$ denotes the open ball of radius $r$ centered at $x$.
Then, abbreviating $\domDeltaHalf=\dom+B_{\delta/2}(0)$, the convolution operator
\begin{equation*}
B_{\mathrm{pr}}\colon\measp(\dom)\to\measp(\domDeltaHalf),\;
\rho_t\mapsto G*\rho_t
\end{equation*}
transforms the intensity of radioactive decays into the intensity of photon emissions. We assumed that annihilation happens within $\domDeltaHalf$, i.e. within the detector's field of view, in order to get a well posed forward operator.
$G$ can also be used to approximately model minor scattering and small deviations of the photon-photon emission angle from $\pi$. In these cases the photon emission position is still close to the imaginary straight line between the two detector positions. This deviation could be modelled statistically and incorporated into the kernel $G$.
In the following we will therefore assume that the emitted photons travel exactly on a straight line.
Note that for simplicity of presentation we picked a spatially homogeneous kernel,
however, without changing the overall approach and the results one can make the kernel depend on position
(since in reality it depends on the surrounding material and the location of the detectors)
as long as one can find some $r>0$ such that the kernel stays uniformly bounded away from zero on the ball of radius $r$.
The next operator $B_{\mathrm{lines}}$ transforms the intensity of photon emission into an intensity on the space $\left(\domDeltaHalf\right)\times S^2$ of position-direction pairs, where $S^2=\{x\in\R^3 \ | \ \norm{x}=1\}$ denotes the sphere in $\R^3$.
A point $(x,v)\in\left(\domDeltaHalf\right)\times S^2$ stands for a photon pair emitted at $x$ along direction $v$. By choosing $v\in S^2$ we induce a symmetry in or forward model since $v$ and $-v$ correspond to the same photon pair.
On $S^2$ there is a natural normalized and uniform volume measure $\vol_{S^2}$, and since the photon directions after annihilation are distributed uniformly, the corresponding operator reads
\begin{equation*}
B_{\mathrm{lines}}\colon\measp(\left(\domDeltaHalf\right))\to\measp(\left(\domDeltaHalf\right)\times S^2),\;
\rho_t\mapsto\rho_t\otimes\vol_{S^2}\,.
\end{equation*}
Finally, each unscattered photon pair $(x,v)\in\left(\domDeltaHalf\right)\times S^2$ will be detected at the positions $R(x,v)$ with
\begin{equation*}
R\colon\left(\domDeltaHalf\right)\times S^2\to\boundDomDelta,\;
R(x,v)=\partial\domDelta\cap(x+\R v)\,,
\end{equation*}
where for simplicity we identify two-element subsets of $\partial\domDelta$ with a point in $\boundDomDelta$ in the following way: The point $(a,b)=R(x,v)\in\boundDomDelta$ is chosen such that $(b-a) / \absNorm{b-a} = v$. 
The operator transforming intensities in the space of photon pairs to intensities in the space of detector pairs thus is the pushforward
\begin{equation*}
B_{\mathrm{detectors}}\colon\measp(\left(\domDeltaHalf\right)\times S^2)\to\measp(\boundDomDelta),\;
\rho_t\mapsto\!\pushforward{R}{\rho_t}.
\end{equation*}

Later, we will work in a dynamic setting and consider measures that evolve in time. The forward operators on spacetime or path measures will be denoted by the same symbols:
It will be clear from the context whether the operators act on $\measp(\dom)$ or on $\measp(\spacetime)$ or on $\measp(C([0,T];\dom))$,
where the relation between them is
\begin{equation*}
\tilde A\theta=\tilde A\rho=\d t\otimes\tilde A\rho_t
\qquad\text{for }\tilde A=A,A^\dt,A^\sct,\text{ or }A^\att.
\end{equation*}
Note, that one could have modelled the directions of photon emissions using the Grassmannian manifold $G^{1,3}$ (one dimensional subspaces in $\R^3$) instead of $S^2$. Since a pair of photons is emitted in opposite directions, the line of emission has no natural sign and $G^{1,3}$ would be a more realistic model on first sight. Because we identified two-element subsets of $\partial\domDelta$ with a point in $\boundDomDelta$, using $G^{1,3}$ instead of $S^2$ would introduce an asymmetry in the detection part of the forward operator, which is why we decided to work with the double covering $S^2$. In the end, interpreting measurements adequately, both formulations are (in some sense) equivalent.
\begin{remark}[Extension of forward operator to $\R^3$]\label{rmk:GeneralizationOfForwardOperatorToR3}
	We defined the forward operator for measures on \dom. This can easily be extended to measures on $\R^3$ by applying the forward operator to the measure restricted to D. This is reasonable as real scanners for example discard events that have happened too closely to the detectors (which would happen for measures having support outside of \dom).
\end{remark}

\subsection{Bayesian ansatz and variables}\label{sec:BayesianAnsatz}
Since the considered underlying spaces are infinite dimensional, the main difficulty in the Bayesian modelling in our case is finding a suitable base measure with respect to which we can write down a density of the prior distribution. 
Sometimes this problem is circumvented by discretization.
For instance we may partition the domain $\dom$ into $\numRegionsA$ regions $\dom^1,\ldots,\dom^\numRegionsA$ of equal size,
thereby discretizing the measure $\rho_0\in\measp(\dom)$ as the vector $r^\numRegionsA=(\rho_0(\dom^1),\ldots,\rho_0(\dom^\numRegionsA))\in[0,\infty)^\numRegionsA$.
On $[0,\infty)^\numRegionsA$ one then considers the Lebesgue measure as the canonical base measure
and models the prior distribution of $\bm r^\numRegionsA$ by a density that is independent of the spatial mass distribution,
for instance $\density{\bm r^\numRegionsA}(r^\numRegionsA)=\prod_{i=1}^\numRegionsA\exp(-r_i^\numRegionsA)=\exp(-\|\rho_0\|)$
(our reasoning will be independent of the specific choice).
The density thus obtained turns out to be independent of the chosen discretization:
If we subdivide each region $\dom^i$ into $\numRegionsB$ smaller regions to obtain $\bar \numRegionsA=\numRegionsA \numRegionsB$ regions in total and thus a finer discretization $r^{\bar \numRegionsA}\in[0,\infty)^{\bar \numRegionsA}$ of $\rho_0$,
then following the same ansatz we again pick the density $\density{\bm r^{\bar \numRegionsA}}(r^{\bar \numRegionsA})=\prod_{i=1}^{\bar\numRegionsA}\exp(-r_i^{\bar \numRegionsA})=\exp(-\|\rho_0\|)$.
Thus one may be tempted to pick $\density{\bm\rho_0}(\rho_0)=\exp(-\|\rho_0\|)$ as the prior density for $\rho_0$ needed for the Bayesian reconstruction functional.
However, in doing so one overlooks the fact that the chosen prior distributions at the different discretization levels are actually incompatible with each other!
Indeed, if $r^{\bar \numRegionsA}$ is distributed according to $\density{\bm r^{\bar \numRegionsA}}(r^{\bar \numRegionsA})(\mathcal L\restr[0,\infty))^{\bar \numRegionsA}$,
then the probability to have at most mass $r_1^\numRegionsA$ in the coarser subdomain $\dom^1$ reads
\begin{multline*}
P(\bm r_1^{\bar \numRegionsA}+\ldots+\bm r_\numRegionsB^{\bar \numRegionsA}\leq r_1^\numRegionsA)
=\int_0^{r_1^\numRegionsA}\int_0^{{r_1^\numRegionsA}-r_\numRegionsB^{\bar \numRegionsA}}\ldots\int_0^{{r_1^\numRegionsA}-r_\numRegionsB^{\bar \numRegionsA}-\ldots-r_2^{\bar \numRegionsA}}\prod_{i=1}^\numRegionsB\exp(-r_i^{\bar \numRegionsA})\,\d r_1^{\bar \numRegionsA}\ldots\d r_\numRegionsB^{\bar \numRegionsA}\\
=1-\exp(-{r_1^\numRegionsA})\sum_{i=0}^{\numRegionsB-1}\frac{(r_1^\numRegionsA)^i}{i!},
\end{multline*}
as can readily be verified via induction in $\numRegionsB$.
Thus, $\bm r_1^\numRegionsA$ is distributed according to the density $\partial P(\bm r_1^{\bar \numRegionsA}+\ldots+\bm r_\numRegionsB^{\bar \numRegionsA}\leq r_1^\numRegionsA)/\partial r_1^\numRegionsA=\exp(-r_1^\numRegionsA)\frac{(r_1^\numRegionsA)^{\numRegionsB-1}}{(\numRegionsB-1)!}$
with respect to the Lebesgue measure, and not according to the density $\exp(-r_1^\numRegionsA)$ that we chose on the coarser discretization level!\\
	
Hence, we follow a different path and instead assume that the radioactive material is lumped into small particles that start in $\dom$ and travel around in $\R^3$ (allowing the particles to move in $\R^3$ instead of just $\dom$ simplifies the modelling) over the time interval $[0,T]$. \\
This assumption adequately describes many situations, for instance, if the radioactive material is carried by travelling leukocytes or other cells (which then represent the above particles).
Even if the radioactive material actually behaves like a diffuse quantity,
then on a mesoscale an infinitesimal volume element still contains many radioactive atoms and can thus be thought of as an imaginary radioactive particle.
For simplicity we consider the situation in which the radionuclide half-life $\halflife$ is much longer than the measurement time $T$
so that the radioactive particles may be assumed to radiate at constant rate.
We will later in \cref{rem:radiodecay} comment on how the approach has to be modified if $T$ becomes comparable to $\halflife$.

As a consequence, the sought quantity will be a nonnegative measure $\theta\in\measp(C_D([0,T];\R^3))$ on the space $C_D([0,T];\R^3)$ of continuous curves in $\R^3$ starting in $\dom$ with its Borel $\sigma$-algebra:
Each path corresponds to a particle trajectory, and the measure indicates which amount of particles follows a given set of trajectories.
The temporally changing material distribution is then given by $\rho=\d t\otimes\rho_t$ with $\rho_t=\Mv_t\theta$ for the mass moving operator
\begin{equation}\label{eq:DefMassMovingOperator}
\Mv_t:\meas(C_D([0,T];\R^3))\to\meas(\R^3),
\
\Mv_t\theta=\pushforward{\eval_t}\theta
\quad\text{with }
\eval_t:C_D([0,T];\R^3)\to\R^3,
\
\eval_t\gamma=\gamma(t).
\end{equation}

% measurement:
The radioactive decay and associated photon emission then happens according to a Poisson point process with intensity
\begin{equation*}
\d t\otimes\lambda_t=\d t\otimes\frac{\ln2}{\halflife}\rho_t,
\end{equation*}
Equivalently, the number of decays within a time interval $[t_1,t_2)$ and subset $B\subset\R^3$ is Poisson-distributed with mean $\int_{t_1}^{t_2}\lambda_t(B)\,\d t$.
To simplify the notation we will in the following simply neglect the factor $\ln 2$ as this does not change any of the calculations (alternatively one could consider a rescaled mass distribution $\theta$ and $\rho$). Similarly, the value of the half-life does not influence most of the calculations so that we may without loss of generality consider $\halflife=1$ unless otherwise stated and we will use $\rho_t$ instead of $\lambda_t$.

As already explained in the introduction, each radioactive decay produces a photon pair which eventually is detected on $\boundDomDelta$.
As a consequence, the measurement $E$ will also be a realization of a Poisson point process, this time on $[0,T]\times\boundDomDelta$,
with intensity $\d t\otimes A\rho^\dagger_t$ ($\dt t\otimes\rho^\dagger_t$ is the ground truth material density to be reconstructed),
\begin{equation*}
\boldsymbol E=\Poi{\d t\otimes A\rho^\dagger_t},
\end{equation*}
where the linear forward operator $A\colon\measp(\R)\to\measp(\boundDomDelta)$ describes the detection process in the PET scanner (see \cref{rmk:GeneralizationOfForwardOperatorToR3} for the generalization of $A$ to $\meas_+(\R^3)$). Since Poisson point processes are proper (up to equality in distribution, see \cite[Cor. 3.7]{bookPPP_last_penrose}), we may restrict our considerations to such processes without loss of generality. A measurement thus is almost surely a Radon measure of the form $E=\sum_{k=1}^K\delta_{(t_k,a_k,b_k)}$.

%Bayesian ansatz:
Our task is to reconstruct $\theta$ given a measurement $E$.
Taking a Bayesian approach, $(E,\theta)$ is viewed as realization of a random variable $(\bm E,\bm\theta)$
with some joint probability distribution of the form $\density{\bm E,\bm\theta}(E,\theta)\baseMeasMeas\otimes \baseMeasPart$
where $\density{\bm E,\bm\theta}$ is a density and $\baseMeasMeas$ and $\baseMeasPart$ are base or reference measures (not necessarily probability measures)
on the space of measurements and particle configurations, respectively.
By integrating with respect to $\theta$ or $E$ one obtains the probability distributions $\density{\bm E}\baseMeasMeas$ of measurements and $\density{\bm\theta}\baseMeasPart$ of particle configurations with
\begin{equation*}
\density{\bm E}(E)=\int \density{\bm E,\bm\theta}(E,\theta)\,\d \baseMeasPart(\theta)
\qquad\text{and}\qquad
\density{\bm\theta}(\theta)=\int \density{\bm E,\bm\theta}(E,\theta)\,\d \baseMeasMeas(E)
\end{equation*}
as well as the conditional probability distributions $\density{\bm E|\bm\theta}(E|\theta)\baseMeasMeas$ of $\bm E$ given $\theta$ and $\density{\bm\theta|\bm E}(\theta|E)\baseMeasPart$ of $\bm \theta$ given $E$ with
\begin{equation*}
\density{\bm E|\bm\theta}(E|\theta)=\density{\bm E,\bm\theta}(E,\theta)/\density{\bm\theta}(\theta)
\qquad\text{and}\qquad
\density{\bm\theta|\bm E}(\theta|E)=\density{\bm E,\bm\theta}(E,\theta)/\density{\bm E}(E).
\end{equation*}
The function $\density{\bm E| \bm \theta}(E|\theta)$ is called the \emph{likelihood};
given a particular measurement $E$, a particle configuration $\theta$ will then be considered the more likely the higher the associated likelihood value $\density{\bm E|\bm\theta}(E|\theta)$ is
(the maximizing $\rho$ is known as maximum likelihood estimate).
The function $\density{\bm\theta|\bm E}(\theta|E)$ is the so-called \emph{posterior distribution} and can by Bayes' rule be expressed as
\begin{equation*}
\density{\bm\theta|\bm E}(\theta|E)=\density{\bm E|\bm\theta}(E|\theta)\density{\bm\theta}(\theta)/\density{\bm E}(E).
\end{equation*}
Its maximizer is known as maximum a posteriori estimate and will be our reconstruction.
Equivalently one expresses the reconstruction as minimizer of the negative log-posterior $-\log\density{\bm\theta|\bm E}(\theta|E)$, which is the functional we aim to derive.

Note that above we simply assumed the distribution of $(\bm E,\bm \theta)$ to have a density with respect to $\baseMeasMeas\otimes \baseMeasPart$.
In fact this structure is implied by the existence of the (conditional) probability distributions $\density{\bm E|\bm\theta}(E|\theta)\baseMeasMeas$ and $\density{\bm\theta}\baseMeasPart$,
so in the remainder of this section we will provide appropriate base measures $\baseMeasMeas$ and $\baseMeasPart$,
model the prior distribution $\density{\bm\theta}\baseMeasPart$, and derive the likelihood $\density{\bm E|\bm\theta}(E|\theta)$.
Now it turns out that while we can readily provide a reasonable prior distribution,
the latter is difficult to express via a density function $\density{\bm\theta}$
since a canonical base measure $\baseMeasPart$ on the infinite-dimensional space $\measp(C_D([0,T];\R^3))$ is lacking. We will therefore follow a trick (which is for instance also used when defining the Wiener measure):
Instead of aiming for a full reconstruction of $\theta$ we just reconstruct
\begin{multline*}
\theta_{t_0,\ldots,t_K}=\pushforward{\eval_{t_0,\ldots,t_K}}\theta\in\measp(\dom\times(\R^3)^K)\\
\quad\text{with }
\eval_{t_0,\ldots,t_K}:C_D([0,T];\R^3)\to\dom\times(\R^3)^K,
\
\eval_{t_0,\ldots,t_K}\gamma=(\gamma(t_0),\ldots,\gamma(t_K))
\end{multline*}
for a fixed chosen number of time points $t_0,\ldots,t_K\in[0,T]$ (containing also $t_0=0$ as well as the time points where a photon pair was recorded).
The projection of the prior distribution for $\theta$ onto the distribution of $\theta_{t_0,\ldots,t_K}$ can more easily be expressed as a density times a base measure.
The final reconstruction functional will then in fact turn out to be independent of the choice of included time points.
Furthermore, we will see that the measurement $E$ (whose photon detection time points are included in $\{t_0,\ldots,t_K\}$) satisfies
\begin{equation*}
\density{\bm E|\bm\theta}(E|\theta)
=\density{\bm E|\bm{\theta}_{t_0,\ldots,t_K}}(E|\theta_{t_0,\ldots,t_K})
\end{equation*}
(the latter being the conditional density of measurements given the partial information $\theta_{t_0,\ldots,t_K}$ of the particle configuration).
Thus, for our Bayesian reconstruction functional we slightly modify our above plan for the remainder of the section:
We will provide base measures $\baseMeasMeas$ and $\baseMeasPartCondTimePoints{t_0,\ldots,t_K}$ (base measure for the partial information $\theta_{t_0,\ldots,t_K}$ of the particle configuration),
model the prior distribution $\density{\bm\theta_{t_0,\ldots,t_K}}\baseMeasPartCondTimePoints{t_0,\ldots,t_K}$, and derive the likelihood $\density{\bm E|\bm\theta}(E|\theta)$.

\subsection{Likelihood function}\label{sec:likelihood}
As explained before, given a radioactive material density $\rho=\d t\otimes\rho_t=\d t\otimes\Mv_t\theta\in\measp([0,T]\times\R^3)$,
the resulting measurements can be described (up to the factor $\ln2/\halflife$ which we agreed to ignore) by the Poisson point process
\begin{equation*}
\bm E=\Poi{A\theta}=\Poi{\d t\otimes A\rho_t}
\end{equation*}
on $[0,T]\times\boundDomDelta$.
We also discussed that for a Bayesian model
we need to express the probability distribution of observations $E$ of $\bm E$ as a density $\density{\bm E|\bm \theta}(E|\theta)$, the likelihood, with respect to some base measure $\baseMeasMeas$.
At first glance it may seem that the choice of the base measure $\baseMeasMeas$ influences the final Bayesian reconstruction functional $\hat J^E$ and thus the reconstruction,
however, in the end this will actually not be the case due to properties of Poisson processes
($\hat J^E$ will be independent of $\baseMeasMeas$).
Hence we may choose any $\baseMeasMeas$ such that the distribution of $\bm E=\Poi{A\theta}=\Poi{A\rho}$ has a density with respect to $\baseMeasMeas$.
To this end, by \cite[Thm.\,3.1.1]{ReissPP} we may for instance choose $\baseMeasMeas$ to be the distribution of $\Poi{\nu}$
for any finite measure $\nu\in\measp([0,T]\times\boundDomDelta)$ with respect to which $A\theta=A\rho=\d t\otimes A\rho_t$ is absolutely continuous,
and the corresponding density reads
\begin{equation*}
\density{\bm E|\bm\theta}(E|\theta)=\left(\prod_{(t,a,b)\in E}\RNderivative{A\theta}{\nu}(t,a,b)\right)\exp\left( ||\nu||-||A\theta|| \right).
\end{equation*}
Later, in \cref{LemmmaBoundednessForwardOp}, we will show that $\d t\otimes A\rho_t$ is absolutely continuous with respect to
\begin{equation*}
\nu=\d t\otimes(\hd^2\restr\partial\domDelta)\otimes(\hd^2\restr\partial\domDelta)
\end{equation*}
for any measure $\rho\in\measp([0,T]\times\R^3)$ of the form $\rho=\d t\otimes\rho_t$, so we fix this choice of $\nu$ from now on.

Next assume that the measurement $E$ detects photon pairs at a subset of the times $t_0,\ldots,t_K$.
The conditional probability density of this particular realization $E$ with respect to $\baseMeasMeas$, given $\theta_{t_0,\ldots,t_K}$, then reads
\begin{equation}\label{eq:LikelihoodTimeConstant}
\density{\bm E|\bm\theta_{t_0,\ldots,t_K}}(E|\theta_{t_0,\ldots,t_K})
=\int\density{\bm E|\bm\theta}(E|\theta)\,\d \probMeasPartCondTimePoints{\bm \theta}{\bm\theta_{t_0,\ldots,t_K}}(\theta|\theta_{t_0,\ldots,t_K}),
\end{equation}
where $\probMeasPartCondTimePoints{\bm \theta}{\bm\theta_{t_0,\ldots,t_K}}(\theta|\theta_{t_0,\ldots,t_K})$ denotes the conditional probability distribution of $\theta$
given its (continuous) projection $\theta_{t_0,\ldots,t_K}=\pushforward{\eval_{t_0,\ldots,t_K}}\theta$.
Now note that our above expression for $\density{\bm E|\bm\theta}(E|\theta)$ actually only depends on $\RNderivative{A\theta}{\nu}$ at the measurement times.
Indeed, assuming constant mass in time inside $\dom$ (this is consistent with the final reconstruction formula \cref{eqn:finalFunctional} where the continuity equation constraint implies constant mass in time) we have
\begin{equation*}
\density{\bm E|\bm\theta}(E|\theta)=\left(\prod_{(t,a,b)\in E}\RNderivative{A\rho_t}{(\hd^2\restr\partial\domDelta)\otimes(\hd^2\restr\partial\domDelta)}(a,b)\right)\exp\left( ||\nu||-T||A\rho_{t_0}|| \right).
\end{equation*}
Therefore the integrand in \cref{eq:LikelihoodTimeConstant} is constant so that
\begin{equation}\label{eqn:likelihood}
\density{\bm E|\bm\theta_{t_0,\ldots,t_K}}(E|\theta_{t_0,\ldots,t_K})
=\density{\bm E|\bm\theta}(E|\theta)\int\,\d \probMeasPartCondTimePoints{\bm \theta}{\bm\theta_{t_0,\ldots,t_K}}(\theta|\theta_{t_0,\ldots,t_K})
=\density{\bm E|\bm\theta}(E|\theta).
\end{equation}

\subsection{Prior distribution}
Next we need to model the prior distribution of the random variable $\bm\theta$
and then express it or, as discussed in \cref{sec:BayesianAnsatz}, rather express the induced prior distribution of $\bm\theta_{t_0,\ldots,t_K}$
as a density $\density{\bm\theta_{t_0,\ldots,t_K}}(\theta_{t_0,\ldots,t_K})$ with respect to some base measure $\baseMeasPartCondTimePoints{t_0,\ldots,t_K}$.
Our model will be based on the following considerations:
We aim for a generic prior distribution that encompasses all kinds of spatiotemporal particle motions
and therefore abstain from using a specific physical or biological model.
Hence, without further information on the particle motion we should assume it to be more or less random,
so as prior distribution for each single particle trajectory we will assume the distribution of Brownian motions starting in $\dom$.
In addition we will have to specify the initial distribution of particles, which we will try to do as uniformly as possible.

Consider first the initial spatial particle configuration $\bm\rho_0$.
Intuitively we would like to put no prior information in and to let all configurations be equally likely
(except for maybe a decreasing likeliness with increasing total particle mass).
This intuition is a little deceptive, though:
In order to say that two configurations are equally likely we already have to implicitly assume the existence of some base measure on the space $\measp(\dom)$ of particle configurations
(so that we can evaluate the probability density at both configurations to compare their likeliness).
However, there is no canonical base measure on an infinite-dimensional space. An appropriate approach to achieve a spatially uniform prior probability distribution for the initial particle configuration $\bm \rho_0$ is achieved
by taking into account that the radioactivity is actually quantized by the radioactivity $\quant$ of a single atom.
One can thus consider the initial material distribution $\rho_0$ as $\quant$ times a realization of a Poisson point process on $\dom$ with uniform intensity
(to make the total amount of radioactive material independent of the domain and the employed radionuclide this intensity should be $\frac1{\quant\mathcal L^3(\dom)}\mathcal L^3\restr\dom$).
In fact, this distribution serves equally well as prior distribution and as the base measure with respect to which the prior distribution is expressed as a density:
It is canonical in the sense that the location of each particle is uniformly distributed on $\dom$ (the amount of particles is Poisson distributed, see \cite[Section 1.2]{ReissPP}). Moreover, it was shown in \cite{defVolumeMeasure} that (mixed) Poisson measures $\mu$ are exactly those measures making gradient and divergence operators dual operators on $L^2(\Gamma_X, \mu)$, where $\Gamma_X$ is the so-called configuration space over $X$ which is the space of all locally finite point measures. This makes Poisson measures a natural choice for a volume measure on the configuration space as they resemble properties of the Lebesgue measure on the Euclidean space.\\
Starting from the initial material distribution $\bm \rho_0$ each particle now moves independently according to a Brownian motion. According to the discussion above we only consider the marginals of the Brownian motions at the time points $t_0,\ldots,t_K$, where the positions at $t_0$ are determined via $\bm\rho_0$. Fixing the number of particles to be $J$ for the moment we view any realization of $\bm\theta_{t_0,\ldots,t_K}$ as a quantized sum $\theta_{t_0,\ldots,t_K}=\quant\sum_{i=1}^J\delta_{(x_0^i,\ldots,x_K^i)}$ and for each $i$ the positions $x_1^i,\ldots,x_K^i$ are drawn from a $K$-dimensional joint normal distribution on $(\R^3)^K$, whereas the initial locations $x_0^i$ are drawn from a uniform distribution \cite[Lemma 1.2.1]{ReissPP}
\begin{equation*}
\frac{\mathcal L^3\restr\dom}{\quant\mathcal L^3(\dom)\frac{\mathcal L^3(\dom)}{\quant\mathcal L^3(\dom)}} = \frac1{\mathcal L^3(\dom)}\mathcal{L}^3\restr\dom
\end{equation*}
 This leads to the density 
\begin{equation*}
\density{(\bm x^i_0,\ldots,\bm x^i_K)_{i=1}^J}((x^i_0,\ldots,x^i_K)_{i=1}^J)=\prod_{i=1}^\numParticlesBayes\prod_{k=1}^K\exp\left(-\tilde\beta\frac{|x_k^i-x_{k-1}^i|^2}{t_k-t_{k-1}}\right)\tilde h(J)
\end{equation*}
for some diffusion coefficient $\frac{1}{2\tilde\beta}>0$ and w.r.t. the base measure $\baseMeasPartTimePoints{(\bm x_0^i,\ldots,\bm x_K^i)_{i=1}^J}^J$ (being a scaled multidimensional Lebesgue measure)
\begin{align*}
\baseMeasPartTimePoints{(\bm x_0^i,\ldots,\bm x_K^i)_{i=1}^J}^J(( \d x_0^i,\ldots,\d x_K^i)_{i=1}^\numParticlesBayes)=h(J)\left(\frac{1}{\mathcal L^3(\dom)}\right)^J\bigotimes_{i=0}^J\left((\mathcal{L}^3\restr\dom)(\d x_0^i)\bigotimes_{k=1}^K\mathcal{L}^3(\d x_k^i)\right)
\end{align*}
for the functions $\tilde h(J)=1$ and
\begin{align*}
h\colon\N\to\R,\quad h(\numParticlesBayes)=\prod_{k=1}^K\left(\frac{\tilde\beta}{\pi(t_k-t_{k-1})}\right)^{3\numParticlesBayes/2}.
\end{align*}
At first sight, this choice seems rather arbitrary. However, it is a natural choice since otherwise the maximizing atom number $\numParticlesBayes$ of the prior density would depend on the chosen times $t_0,\ldots,t_K$.
In other words, only by this choice of $h$ neither the prior distribution nor its density induce a bias on $\numParticlesBayes$ that depends on the measurement times.\\
Finally, we include the randomness of the number of particles $\numParticlesBayes$ and arrive at the density
\begin{align*}
\density{\bm\theta_{t_0,\ldots,t_K}}(\theta_{t_0,\ldots,t_K})=\density{(\bm x_0^i,\ldots,\bm x_K^i)_{i=1}^J}((x_0^i,\ldots,x_K^i)_{i=1}^J)\quad\text{for }\theta_{t_0,\ldots,t_K}=\quant\sum_{i=1}^J\delta_{(x_0^i,\ldots,x_K^i)}
\end{align*}
w.r.t. the measure
\begin{align*}
\baseMeasPartCondTimePoints{t_0,\ldots,t_K}(\d\theta_{t_0,\ldots,t_K}) = \baseMeasPartTimePoints{(\bm x_0^i,\ldots,\bm x_K^i)_{i=1}^\numParticlesBayes}^\numParticlesBayes(( \d x_0^i,\ldots,\d x_K^i)_{i=1}^\numParticlesBayes)\otimes\mathrm{Poi}_{\mathcal{L}^3(\dom)/\quant}(\d\numParticlesBayes),
\end{align*}
where $\mathrm{Poi}_{\mathcal{L}^3(\dom)/\quant}$ is the Poisson distribution with parameter $\mathcal{L}^3(\dom)/\quant$. Since we chose both the prior distribution and base measure for the initial particle configuration to be the same, we do not get additional factors in the final density. Note, that for every fixed number of particles $J$
\begin{align*}
\density{(\bm x_0^i,\ldots,\bm x_K^i)_{i=1}^J}((x_0^i,\ldots,x_K^i)_{i=1}^J)\baseMeasPartTimePoints{(\bm x_0^i,\ldots,\bm x_K^i)_{i=1}^\numParticlesBayes}^\numParticlesBayes(( \d x_0^i,\ldots,\d x_K^i)_{i=1}^\numParticlesBayes)
\end{align*}
has unit mass by our choices of $h$ and $\tilde h$.

Consequently, if multiple radioactive atoms happen to travel together in $\numParticlesBayes$ particles of masses $m_1,\ldots,m_\numParticlesBayes$, then the density turns into
\begin{equation}\label{eqn:priorDensity}
\density{\bm\theta_{t_0,\ldots,t_K}}(\theta_{t_0,\ldots,t_K})
=\prod_{i=1}^\numParticlesBayes\prod_{k=1}^K\exp\left(-m_i\beta\frac{|x_k^i-x_{k-1}^i|^2}{t_k-t_{k-1}}\right)
\quad\text{if }
\theta_{t_0,\ldots,t_K}=\sum_{i=1}^\numParticlesBayes m_i\delta_{(x_0^i,\ldots,x_K^i)},
\end{equation}
where we abbreviated $\beta=\tilde\beta/\quant$.

We derived this prior density assuming that only quantized measures $\theta_{t_0,\ldots,t_K}$ can occur.
However, an optimization over quantized measures is difficult, so we simply extend the above density to all discrete nonnegative measures $\theta_{t_0,\ldots,t_K}$ (also nonquantized ones).
On the level of the optimization problem introduced in the next paragraph this corresponds to performing a standard convex relaxation.

\begin{remark}[Alternative to quantization]
Instead of working with quantized measures so that a Poisson point process can serve as the base measure,
one could also consider base measures that allow distributions of particles with random locations as well as random masses.
To this end one would employ so-called marked Poisson point processes in which the mark assigns each point a random mass.
However, in that case one would have to propose a model of how the diffusion constant of the Brownian motion should depend on that mass,
and it woud be more involved to take care that the choice of times $t_0,\ldots,t_K$ does not influence the prior.
\end{remark}

\begin{remark}[Discrete particle paths]
We only defined our density $\density{\bm\theta_{t_0,\ldots,t_K}}$ for discrete measures $\theta_{t_0,\ldots,t_K}$
(a natural relaxation to arbitrary nonnegative measures will be performed in the next paragraph).
Later in \cref{thm:minimizerStructure} we will show that indeed there exists a reconstruction that consists of finitely many particles.
At first sight one might be worried that those reconstructions are all very special
in that many atoms are lumped together in larger particles and travel together
(a situation that has probability zero with respect to our above introduced base measure $P_{\bm\theta_{t_0,\ldots,t_K}}$).
However, actually this is the expected behaviour of MAP estimates:
The most likely configuration often shows much stronger regularity than a typical configuration.
\end{remark}

\subsection{Posterior distribution and Bayesian functional}\label{sec:posterior}
As usual let the measurement times in the measurement $E$ be a subset of $\{t_0,\ldots,t_K\}$,
and abbreviate $\rho_{t_k}=\pushforward{\pi_k}{\theta_{t_0,\ldots,t_K}}$ for the projection $\pi_k:(x_1,\ldots,x_K)\mapsto x_k$.
By Bayes' rule we have
\begin{equation*}
\density{\bm\theta_{t_0,\ldots,t_K}|\bm E}(\theta_{t_0,\ldots,t_K}|E)
=\density{\bm E|\bm\theta_{t_0,\ldots,t_K}}(E|\theta_{t_0,\ldots,t_K})\density{\bm\theta_{t_0,\ldots,t_K}}(\theta_{t_0,\ldots,t_K})/\density{\bm E}(E).
\end{equation*}
Taking the negative logarithm and inserting the expressions from \eqref{eqn:likelihood} and \eqref{eqn:priorDensity}
we arrive (up to an additive constant depending on $E$) at the following Bayesian reconstruction functional for $\theta_{t_0,\ldots,t_K}=\sum_{i=1}^\numParticlesBayes m_i\delta_{(x_1^i,\ldots,x_K^i)}$,
\begin{equation*}
\tilde J^E(\theta_{t_0,\ldots,t_K})
=T\|A\rho_{t_0}\|-\sum_{(t,a,b)\in E}\log\left(\RNderivative{A\rho_t}{(\hd^2\restr\partial\domDelta)\otimes(\hd^2\restr\partial\domDelta)}(a,b)\right)
+\beta\sum_{k=1}^K\sum_{i=1}^\numParticlesBayes m_i\frac{|x_k^i-x_{k-1}^i|^2}{t_k-t_{k-1}},
\end{equation*}
whose minimizer is the desired Bayesian reconstruction. Note, that the above functional is oblivious to stationary mass being placed outside of $\dom$ since neither of the three different components of the functional would be influenced by this. Hence, we can restrict the reconstruction to particle positions inside $\dom$.
By associating with $\theta_{t_0,\ldots,t_K}$ the measure $\theta=\sum_{i=1}^\numParticlesBayes m_i\delta_{\gamma_i}\in\measp(C([0,T];\dom))$
with $\gamma_i$ the piecewise linear interpolation of the points $x_0^i,\ldots,x_K^i$ at times $t_0,\ldots,t_K$, we obtain
\begin{multline*}
\tilde J^E(\theta_{t_0,\ldots,t_K})
=\min_{\substack{\theta\in\measp(C([0,T];\dom))\\\theta_{t_0,\ldots,t_K}=\pushforward{\eval_{t_0,\ldots,t_K}}\theta}}\bar J^E(\theta)\\
\text{for }
\bar J^E(\theta)
=\|A\theta\|-\sum_{(t,a,b)\in E}\log\left(\RNderivative{A\theta}{\nu}(t,a,b)\right)
+\beta\int\int_0^T|\dot\gamma(t)|^2\,\d t\d\theta(\gamma).
\end{multline*}
The functional $\tilde J^E$ obviously is independent of the chosen time points $t_0,\ldots,t_K$ (as long as they contain the measurement time points)
so that its minimizer may be viewed as the Bayesian reconstruction of the particle configuration $\theta\in\measp(C([0,T];\dom))$.
Finally, the equivalence of the Benamou--Brenier formulation to a generalized flow formulation of optimal transport \cite[\S\,4.2 \& 5.3]{Be21} implies
\begin{multline}\label{eqn:DefJhat}
\min_{\theta\in\measp(C([0,T];\dom))}\bar J^E(\theta)
=\min_{\rho\in\measp([0,T]\times\dom),\momentum\in\meas([0,T]\times\dom)^3}\hat J^E(\rho,\momentum)\\
\text{for }
\hat J^E(\rho,\momentum)=
\|A\rho\|-\sum_{(t,a,b)\in E}\log\left(\RNderivative{A\rho}{\nu}(t,a,b)\right)
+\beta\BBEnergy(\rho,\momentum),
\end{multline}
where the optimal $\theta$ and $(\rho,\momentum)$ are related by $\rho=\d t\otimes\Mv_t\theta$ and $\momentum=\d t\otimes\Mv_t(\theta\dot\gamma)$.
Thus, the optimal $\rho$ and $\momentum$ describe the temporally changing radioactive mass distribution and mass flux associated with the particle configuration $\theta$.
The functional $\hat J^E$ is that formulation of the negative log posterior that we aimed for and which we will consider in the remainder of the article (up to subtle changes due to a bias removal, see next section).
In essence, its Benamou--Brenier term implies that we assign a higher likeliness to a path $(\rho,\momentum)$ the less mass moves along that path.
This acts as a temporal or kinetic regularization of the particle trajectories.

\begin{remark}[Nonnegligible radioactive decay]\label{rem:radiodecay}
As mentioned previously, we assumed $T\ll\halflife$ so that the amount of radioactive material does essentially not change over time
and the continuity equation \eqref{eqn:continuity} is valid.
If in contrast $T$ becomes comparable to $\halflife$,
then the continuity equation \eqref{eqn:continuity} has to be complemented with an additional decay term,
\begin{equation*}
\partial_t\rho+\div\momentum=-\frac{\ln2}{\halflife}\rho
\end{equation*}
(which then is related to so-called unbalanced optimal transport \cite{ChPeScVi18}).
\end{remark}

\begin{remark}[Alternative derivation]
The same functional $\hat J^E$ could also have been obtained by assuming a different prior density $\density{\bm\theta_{t_0,\ldots,t_K}}(\theta_{t_0,\ldots,t_K})$:
We could have chosen
\begin{equation*}
\density{\bm\theta_{t_0,\ldots,t_K}}(\theta_{t_0,\ldots,t_K})
\sim\exp\left(-\beta\sum_{k=2}^K \frac{W_2^2(\rho_{t_{k-1}},\rho_{t_k})}{t_k-t_{k-1}}\right),
\end{equation*}
where again $\rho_{t_k}=\pushforward{\pi_k}{\theta_{t_0,\ldots,t_K}}$ and $W_2^2$ denotes the squared Wasserstein distance.
However, with the Wasserstein optimal transport distance there comes along an identification of particles between $\rho_{t_{k-1}}$ and $\rho_{t_k}$,
and this does not have to correspond to the actual identification encoded in $\theta_{t_0,\ldots,t_K}$.
Only after finding the most likely reconstruction by minimizing the Bayesian reconstruction functional
the optimized variables $\rho_{t_k}$ and $\theta_{t_0,\ldots,t_K}$ allow a consistent particle identification.
\end{remark}

\subsection{Discrete measurements}\label{sec:discrete}

In practice, a measurement is not taken in the continuous spacetime setting, but rather in a discretized fashion:
The photons are collected in detectors that have a nonvanishing spatial extent, and detected photon pairs are binned into time intervals of (short, but) positive length $\Delta T$.
The photon detectors partition $\partial\domDelta$ into $M$ measurable disjoint regions
\begin{equation*}
\Gamma_j\subset\partial\domDelta
\qquad\text{ with }\hd^2(\partial\Gamma_j)=0,\quad j=1,\ldots,M,
\end{equation*}
and the measurement time is partitioned into $N$ disjoint time bins
\begin{equation*}
\tau_i=[(i-1)\Delta T,i\Delta T),\quad i=1,\ldots,N=\tfrac T{\Delta T}.
\end{equation*}
A measurement $E$ then is a list of indices, indicating which detector pairs detected a photon pair in which time bins.
Equivalently, each detector $\Gamma_j$ is identified with a point $z_j\in\Gamma_j$ and each time bin $\tau_i$ with its centre $(i-\frac12)\Delta T$
so that as before the measurement consists of a list $E=(t_k,a_k,b_k)_{k=1,\ldots,K}\in(\R\times\partial \domDelta\times\partial \domDelta)^K$
with $a_k,b_k\in\{z_1,\ldots,z_M\}$ and $t_k\in\{\frac{\Delta T}2,\ldots,T-\frac{\Delta T}2\}$.

To account for the discrete measurement we modify the forward operator $A$ from \cref{sec:forwardOperator} by applying an additional discretization operator
\begin{equation*}
B_\discrete:\meas([0,T]\times\boundDomDelta)\to\meas([0,T]\times\boundDomDelta),\quad
B_\discrete\mu=\sum_{i=1}^N\sum_{j,k=1}^M\mu(\tau_i\times\Gamma_j\times\Gamma_k)\delta_{((i-\frac12)\Delta T,z_j,z_k)}.
\end{equation*}
To unify the notation for both the continuous and the discrete scenario,
in the latter we simply redefine $A=B_\discrete A_\cont$ for $A_\cont$ being the operator $A$ from \cref{sec:forwardOperator} and analogously for $A^\att$, $A^\sct$, and $A^\dt$.
Likewise we need to adapt the reference measure $\nu$ that we employed to calculate the likelihood in \cref{sec:likelihood}:
Since it has to dominate $A\rho$, in the discrete case we can simply redefine it as $\nu=\sum_{i=1}^N\sum_{j,k=1}^M\delta_{((i-\frac12)\Delta T,z_j,z_k)}$.
Apart from this modification of the operator $A$ and the measure $\nu$
the derivation of the Bayesian reconstruction functional $\hat J^E$ is exactly the same in the discrete setting as in the continuous one.

\begin{remark}[Vanishing positron range in discrete setting]\label{rem:discreteVsPositronRange}
Let us mention that in the discrete setting we can even deal with vanishing positron range
(for instance if the positron range is so small that one would like to neglect it in the modelling),
which corresponds to the convolution kernel $G$ from \cref{sec:forwardOperator} being the Dirac measure.
Indeed, in the discrete setting, due to the definition of the discretization operator $B_\discrete$ it is trivial that $\nu$ dominates $A\rho$ (whether with or without positron range)
so that our calculation of the likelihood in \cref{sec:likelihood} is valid.
This is not so in the continuous setting, in which one needs a nonvanishing positron range for $\nu$ to dominate $A\rho$ (compare \cref{LemmmaBoundednessForwardOp}).
\end{remark}

\section{Unbiasing for scattered events}
The MAP estimate is the minimizer of the Bayesian reconstruction functional $\hat J^E$ or equivalently $\tilde J^E$,
which in turn is the sum of the negative logarithms of the likelihood function $\density{\bm E|\bm\theta_{t_0,\ldots, t_K}}(E|\theta_{t_0,\ldots,t_K})$
and the prior density $\density{\bm\theta_{t_0,\ldots,t_K}}(\theta_{t_0,\ldots,t_K})$.
Unfortunately, this MAP estimate is strongly biased towards declaring every detected photon pair as being unscattered.
Essentially, this is due to the first summand, $-\log\density{\bm E|\bm\theta_{t_0,\ldots,t_K}}(E|\theta_{t_0,\ldots,t_K})$,
and more specifically due to the scatter operator $p^\sct A^\sct$ being very small compared to the operator $p^\dt A^\dt$ associated with unscattered photon pairs in the definition of $A$:
Indeed, $A^\sct$ distributes the intensity evenly over all detectors, while $A^\dt$ concentrates all the intensity on a few detectors and therefore produces much higher intensities there.
This is a well-known deficiency of the maximum likelihood (ML) estimator (the minimizer of just the first summand) and consequently also the MAP estimator,
another variant of which is the fact that, for a number of samples drawn from a fixed distribution, the empirical density
(which is the ML estimate of the distribution)
explains every sample by a Dirac mass at the sample position rather than a more evenly spread distribution.
In this section we provide a remedy for this deficiency.
To this end we first explore the bias in more detail in simplified 1D examples, then introduce and analyse our remedy, and finally showcase its effect on the simplified examples. As these example focus in the likelihood, no (kinetic) regularization for $\rho$ is considered.

\subsection{Bias of MAP estimate towards nonscattered events}\label{sec:bias}
Our simplified 1D examples consider the domain
\begin{equation*}
D=[0,1]
\end{equation*}
with some sought ground truth distribution $\rho^\dagger\in\measp(D)$ and a simplified forward operator $A:\measp(D)\to\measp(D)$.
For simplicity we will equip $D$ with periodic boundary conditions, that is, we will identify the location $0$ with $1$
so that convolution of periodic functions on $D$ is well-defined.

We start by looking at a continuous ground truth distribution and the simplest possible forward operator
\begin{equation*}
\rho^\dagger=\numParticles\mathcal{L}\restr D
\qquad\text{and}\qquad
A\rho=\rho
\end{equation*}
for some $\numParticles\in\N$.
Thus, the measurement $E$ is a realization of $\Poi{\rho^\dagger}$ and may for instance be given by $E=\sum_{i=1}^\numParticles\delta_{x_i}$ for some $x_1,\ldots,x_\numParticles\in D$.
The negative logarithm of the likelihood (taking $\nu=\mathcal{L}\restr D$) in this case reads
\begin{equation*}
\hat J^E(\rho)=\norm{\rho}-\int\log\left( \frac{\d\rho}{\d\mathcal{L}} \right)\d E=\norm\rho-\sum_{i=1}^\numParticles\log\left( \frac{\d\rho}{\d\mathcal{L}}(x_i)\right).
\end{equation*}
Now let $\RNderivative{\rho_\varepsilon}{\mathcal{L}}(x)=\sum_{i=1}^\numParticles\frac1\varepsilon\varphi_\varepsilon(\frac{x-x_i}\varepsilon)$ for a mollifier $\varphi:\R\to\R$ of compact support, unit mass and positive $\varphi(0)$.
Then $\rho_\varepsilon\weakstarto\rho^\dagger$ and $\hat J^E(\rho_\epsilon)\to-\infty$ as $\epsilon\to0$,
which shows the overwhelming tendency of the ML estimate to explain every single sample point by a Dirac measure,
even though the ground truth is a multiple of the Lebesgue measure.

Our next example is a little closer to the actual situation during a PET measurement, including positron range and scatter: Let $p^\sct\in[0,1]$ denote the probability of scatter and consider
\begin{equation*}
\rho^\dagger=\numParticles\delta_{x_0}\in\measp(D)
\qquad\text{and}\qquad
A\rho=p^\sct\|\rho\|\mathcal{L}\restr D+(1-p^\sct)G\ast\rho
\end{equation*}
for some $\numParticles\in\N$ and $x_0\in D$ and a smooth positron range kernel $G$ with unit integral and maximum at $0$.
Again, the measurement $E$ is a realization of $\Poi{A\rho^\dagger}$ and may for instance be given by $E=\numNonScattered\delta_{x_0}+\sum_{i=1}^{\numParticles-\numNonScattered}\delta_{x_i}$
for some $\numNonScattered\in\N$ and scattered detection positions $x_1,\ldots,x_{\numParticles-\numNonScattered}\in D$ (where typically $\numNonScattered/\numParticles\approx1-p^\sct$).
The corresponding negative logarithm of the likelihood (again taking $\nu=\mathcal{L}\restr D$) then reads
\begin{equation*}
\hat J^E(\rho)=\norm{\rho}-\int\log\left( \frac{\d A\rho}{\d\mathcal{L}} \right)\d E
=\norm\rho-\numNonScattered\log\left( \frac{\d A\rho}{\d\mathcal{L}}(x_0) \right)-\sum_{i=1}^{\numParticles-\numNonScattered}\log\left( \frac{\d A\rho}{\d\mathcal{L}}(x_i)\right).
\end{equation*}
If the $x_i$ are sufficiently far from each other with respect to the width of $G$,
it is not difficult to see that this is minimized by a measure of the form $\rho=\alpha\delta_{x_0}+\beta\sum_{i=1}^{\numParticles-\numNonScattered}\delta_{x_i}$ for some $\alpha,\beta>0$,
so all scatter events are explained by a nonscattered Dirac measure.

As discussed in \cref{rem:discreteVsPositronRange}, another reasonable setting to consider is the one in which instead of a positron range we have discrete measurements,
which is our last example.
To this end let $D$ be discretized into $M$ half-open intervals $\Gamma_1,\ldots,\Gamma_M$ of width $\frac1M$ and with centres $z_1,\ldots,z_M\in D$ and take
\begin{equation*}
\rho^\dagger=\numParticles\delta_{x_0}\in\measp(D)
\qquad\text{and}\qquad
A\rho=B_\discrete(p^\sct\|\rho\|\mathcal{L}\restr D+(1-p^\sct)\rho)
\qquad\text{with}\qquad
B_\discrete\mu=\sum_{i=1}^M\mu(\Gamma_i)\delta_{z_i}.
\end{equation*}
Assuming $x_0\in\Gamma_j$, a typical measurement would be $E=\numNonScattered\delta_{z_j}+\sum_{i\in I}\delta_{z_i}$ for some integer $\numNonScattered\approx(1-p^\sct)\numParticles$ and $I\subset\{1,\ldots,M\}$ with $\numParticles-\numNonScattered$ elements.
Taking $\nu=\sum_{i=1}^M\delta_{z_i}$ the corresponding negative logarithm of the likelihood reads
\begin{equation*}
\hat J^E(\rho)=\norm{\rho}-\int\log\left( \frac{\d A\rho}{\d\nu} \right)\d E
=\norm\rho-\numNonScattered\log\left( A\rho(\{z_j\}) \right)-\sum_{i\in I}\log\left( A\rho(\{z_i\})\right).
\end{equation*}
Again it is not difficult to see that this is minimized by a measure of the form $\rho=\alpha\delta_{z_j}+\beta\sum_{i\in I}\delta_{z_i}$ for some $\alpha,\beta>0$,
so again all scattered events are explained by a nonscattered Dirac measure.

\subsection{Mixed integer MAP estimate and convex relaxation}\label{sec:relaxation}
To mitigate the influence of the MAP estimate's bias we modify the considered functional
(note that with the conventions introduced in \cref{sec:discrete} we consider the continuous and discrete setting simultaneously).
To this end we introduce the new auxiliary random variable $\bm{E}^\sct$ which denotes the photon detections after a scatter event has occurred.
Analogously to \cref{sec:likelihood}, given a radioactive material density $\rho=\d t\otimes\rho_t=\d t\otimes\Mv_t\theta\in\measp([0,T]\times\dom)$, this random variable is given by
\begin{equation*}
\bm E^\sct=\Poi{p^\sct A^\sct\theta}=\Poi{\d t\otimes p^\sct A^\sct\rho_t}.
\end{equation*}
Accordingly, the events resulting from nonscattered detections are described by
\begin{equation*}
\bm{E}^\dt=\Poi{p^\dt A^\dt\theta}=\Poi{\d t\otimes p^\dt A^\dt\rho_t},
\end{equation*}
which conditioned on the realization $\theta$ of the random variable $\bm\theta$ is independent of $\bm{E}^\sct$.
The actual measurement is then a realization of the random variable $\bm{E}=\bm{E}^\dt+\bm{E}^\sct$.
With this splitting of the measurement we repeat the derivation of a MAP estimate, this time for $\theta_{t_1,\ldots,t_K}$ as well as for the realization $E^\sct$ of $\bm{E}^\sct$.
We employ the same base measure $\baseMeasMeas$ for $\bm E^\d$ and $\bm E^\sct$ (again the final MAP estimate will be independent of that choice).
Using the same notation as before, Bayes' rule yields
\begin{align*}
&\density{\bm\theta_{t_1,\ldots,t_K},\bm E^\sct|\bm E}(\theta_{t_1,\ldots,t_K}, E^\sct|E)
=\frac{\density{\bm\theta_{t_1,\ldots,t_K},\bm E^\sct, \bm E}(\theta_{t_1,\ldots,t_K}, E^\sct, E)}{\density{\bm E}(E)}\\
=&\frac{\density{\bm E^\sct, \bm E|\bm\theta_{t_0,\ldots,t_K}}(E^\sct, E|\theta_{t_1,\ldots,t_K})\density{\bm\theta_{t_1,\ldots,t_K}}(\theta_{t_1,\ldots,t_K})}{\density{\bm E}(E)}.
\end{align*}
The only new term is $\density{\bm E^\sct, \bm E|\theta_{t_0,\ldots,t_K}}(E^\sct, E|\theta_{t_1,\ldots,t_K})$,
and by the same argument as in \cref{sec:likelihood} it equals $\density{\bm E^\sct, \bm E|\bm\theta}(E^\sct, E|\theta)$.
Exploiting the conditional independence of $\bm{E}^\dt$ and $\bm{E}^\sct$ given $\theta$ we can calculate
\begin{equation*}
\density{\bm E^\sct, \bm E|\bm \theta_{t_1,\ldots,t_K}}(E^\sct, E|\theta_{t_1,\ldots,t_K})
=\density{\bm E^\sct, \bm E|\bm\theta}(E^\sct, E|\theta)
=\density{\bm E^\sct, \bm E^\dt|\bm\theta}(E^\sct, E-E^\sct|\theta)
=\density{\bm E^\sct|\bm\theta}(E^\sct|\theta)
\density{\bm E^\dt|\bm\theta}(E-E^\sct|\theta)
\end{equation*}
with
\begin{equation*}
\density{\bm E^i|\bm\theta}(\tilde E|\theta)
=\left(\prod_{(t,a,b)\in\tilde E}\RNderivative{(p^iA^i\theta)}{\nu}(t,a,b)\right)\exp\left( ||\nu||-||p^iA^i\theta|| \right)
\end{equation*}
for $i\in\{\sct,\dt\}$, just like in \cref{sec:likelihood}.

Repeating now the steps from \cref{sec:posterior} we finally arrive at the functional
\begin{equation*}
\hat J^E(\rho,\momentum,E^\sct)=
\|A\rho\|
-\sum_{(t,a,b)\in E^\sct}\log\left(\RNderivative{(p^\sct A^\sct\rho)}{\nu}(t,a,b)\right)
-\sum_{(t,a,b)\in E\setminus E^\sct}\log\left(\RNderivative{(p^\dt A^\dt\rho)}{\nu}(t,a,b)\right)
+\beta\BBEnergy(\rho,\momentum),
\end{equation*}
to be minimized for $\rho\in\measp(\spacetime)$, $\momentum\in\meas(\spacetime)^3$, and $E^\sct\subset E$.
So far, nothing is won, yet, the bias of the MAP estimate (the minimizer of this new functional) of course persists.
However, it is now easy to counteract this bias:
Since one knows that approximately the fraction $p^\sct$ of the total number $|E|$ of photon pair detections must actually have been scattered,
one may restrict the optimization in $E^\sct$ to subsets of $E$ with exactly this estimated number of elements.
Then the prescribed fraction of photon pair detections is no longer misinterpreted as unscattered events.
Actually, one would prescribe $|E^\sct|$ slightly larger than its expected value in order to get an acceptably small probability of underestimating the true value:
An underestimation would not sufficiently reduce the bias of hallucinating radioactive material where there is actually none,
while a slight overestimation is less problematic from the application viewpoint -- it will just slightly decrease the spatiotemporal accuracy of the reconstruction.

Unfortunately, the above suggested minimization in the set $E^\sct$ is a numerically difficult combinatorial problem.
Therefore we replace the mixed integer optimization problem
\begin{equation}\label{eqn:combinatorial}
\text{minimize }\hat J^E(\rho,\momentum,E^\sct)\text{ such that }E^\sct\subset E\text{ with }|E^\sct|=\numScattered
\end{equation}
with the continuous optimization problem
\begin{equation}\label{eqn:max}
\text{minimize }\bar J^{E,q}(\rho,\momentum)
=\norm{A\rho}
-\sum_{(t,a,b)\in E}\log\max\left\{q\RNderivative{(p^\sct A^\sct\rho)}{\nu}(t,a,b),\RNderivative{(p^\dt A^\dt\rho)}{\nu}(t,a,b)\right\}
+\beta\BBEnergy(\rho,\momentum)
\end{equation}
in which we now have to fix the parameter $q$ instead of $\numScattered$
(note that $\log q$ can be thought of like a Lagrange multiplier for the constraint $|E^\sct|=\numScattered$).
For a proper pairing of the tuning parameters $q$ and $\numScattered$ both optimization problems are indeed related as we will show below.
In more detail, for every $q$ we can find a $\numScattered$ such that \eqref{eqn:combinatorial} has the same solution as \eqref{eqn:max}.
The other direction does unfortunately not hold:
There may in principle exist values of $\numScattered$ such that \eqref{eqn:max} is not equivalent to \eqref{eqn:combinatorial} for any $q$.
However, the relation between $q$ and $\numScattered$ is at least monotone with several more desirable properties as we show below.

\begin{remark}[Existence of solutions]
	We do not prove existence of solutions to \eqref{eqn:combinatorial} or \eqref{eqn:max} here
	since they do not represent our final minimization problem.
	However, we note that both problems do admit solutions
	by almost exactly the same proof as for our existence result \cref{thm:ExistenceMinimizers} for the final minimization problem.
	At least this proof applies as long as $E^\sct$ in \eqref{eqn:combinatorial} is fixed; the subsequent optimization over $E^\sct$ then is just a finite optimization and thus well-posed.
\end{remark}

We first show that for every $q$ there is a corresponding $\numScattered$ with an equivalent optimization problem.
Essentially, any element of the measurement $E$ at which $q\RNderivative{(p^\sct A^\sct\rho)}{\nu}$ dominates $\RNderivative{(p^\dt A^\dt\rho)}{\nu}$ is interpreted as scattered,
therefore we introduce the notation
\begin{align*}
\overline E^\sct(q,\rho)&\textstyle=\{(t,a,b)\in E\,|\,q\RNderivative{(p^\sct A^\sct\rho)}{\nu}(t,a,b)\geq\RNderivative{(p^\dt A^\dt\rho)}{\nu}(t,a,b)\},\\
\underline E^\sct(q,\rho)&\textstyle=\{(t,a,b)\in E\,|\,q\RNderivative{(p^\sct A^\sct\rho)}{\nu}(t,a,b)>\RNderivative{(p^\dt A^\dt\rho)}{\nu}(t,a,b)\}\subset\overline E^\sct(q,\rho)
\end{align*}
for the maximal and minimal set of detections interpreted as scatter.

\begin{proposition}[Equivalence of minimization problems]\label{thm:debiasing}
	For every $q>0$ and every solution $(\rho,\momentum)$ of \eqref{eqn:max}
	there exists a $\numScattered\in\N$ and some $E^\sct\subset E$ such that $(\rho,\momentum,E^\sct)$ solves \eqref{eqn:combinatorial} and $\absNorm{E^\sct}=\numScattered$.
\end{proposition}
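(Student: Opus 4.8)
The plan is to exploit the fact that $\log q$ acts as a Lagrange multiplier coupling the two problems: once the cardinality of the scatter set is fixed, the two functionals differ only by the constant $\numScattered\log q$, so that a minimizer of one yields a minimizer of the other. Concretely, given a solution $(\rho,\momentum)$ of \eqref{eqn:max} I would take $E^\sct=\overline E^\sct(q,\rho)$ and $\numScattered=\absNorm{E^\sct}$, and show this triple solves \eqref{eqn:combinatorial}.

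First I would record the pointwise identity underlying everything. Writing for brevity $d^\sct=\RNderivative{(p^\sct A^\sct\rho)}{\nu}$ and $d^\dt=\RNderivative{(p^\dt A^\dt\rho)}{\nu}$, for any $(\rho,\momentum)$ and any subset $\tilde E^\sct\subset E$ one has, term by term,
\[
\hat J^E(\rho,\momentum,\tilde E^\sct)-\absNorm{\tilde E^\sct}\log q
=\norm{A\rho}-\sum_{(t,a,b)\in\tilde E^\sct}\log\!\big(q\,d^\sct(t,a,b)\big)-\sum_{(t,a,b)\in E\setminus\tilde E^\sct}\log d^\dt(t,a,b)+\beta\BBEnergy(\rho,\momentum).
\]
Since at every measurement point both $\log(q\,d^\sct)$ and $\log d^\dt$ are bounded above by $\log\max\{q\,d^\sct,d^\dt\}$, summing yields the key inequality
\[
\hat J^E(\rho,\momentum,\tilde E^\sct)-\absNorm{\tilde E^\sct}\log q\ge\bar J^{E,q}(\rho,\momentum),
\]
valid for all $(\rho,\momentum,\tilde E^\sct)$, with equality precisely when $\tilde E^\sct$ selects the dominant alternative at each point, i.e.\ whenever $\underline E^\sct(q,\rho)\subset\tilde E^\sct\subset\overline E^\sct(q,\rho)$. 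In particular equality holds for the choice $E^\sct=\overline E^\sct(q,\rho)$.

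With $E^\sct$ and $\numScattered$ as above (well defined, since finiteness of $\bar J^{E,q}(\rho,\momentum)$ forces $\max\{q\,d^\sct,d^\dt\}>0$ at every point of $E$, hence all terms of $\hat J^E(\rho,\momentum,E^\sct)$ are finite), I would then compare against an arbitrary feasible competitor $(\rho',\momentum',\tilde E^\sct)$ for \eqref{eqn:combinatorial} with $\absNorm{\tilde E^\sct}=\numScattered$. Combining the inequality (which converts $\hat J^E$ into $\bar J^{E,q}$ up to the constant $\numScattered\log q$ because the cardinality is fixed) with optimality of $(\rho,\momentum)$ for $\bar J^{E,q}$ and the equality case for $E^\sct$ gives
\[
\hat J^E(\rho',\momentum',\tilde E^\sct)\ge\bar J^{E,q}(\rho',\momentum')+\numScattered\log q\ge\bar J^{E,q}(\rho,\momentum)+\numScattered\log q=\hat J^E(\rho,\momentum,E^\sct),
\]
so $(\rho,\momentum,E^\sct)$ is optimal among all competitors with $\numScattered$ scatter events, i.e.\ it solves \eqref{eqn:combinatorial}.

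There is no serious analytic obstacle here; the argument is essentially a one-line Lagrangian computation once the pointwise identity is in place. The only points requiring care are bookkeeping: the $\numScattered\log q$ terms cancel only because the cardinality is pinned to exactly $\absNorm{\overline E^\sct(q,\rho)}$, and one must confirm the densities entering $\hat J^E(\rho,\momentum,E^\sct)$ are strictly positive so no spurious $-\infty$ arises — both consequences of finiteness of the optimal value of $\bar J^{E,q}$. It is worth stressing that this establishes only the ``for every $q$ there is an $\numScattered$'' direction, via $\rho\mapsto\overline E^\sct(q,\rho)$; the reverse correspondence need not be surjective, consistent with the discussion preceding the proposition.
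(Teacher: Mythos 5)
Your proposal is correct and follows essentially the same argument as the paper: fix $E^\sct$ between $\underline E^\sct(q,\rho)$ and $\overline E^\sct(q,\rho)$ (the paper allows any such set, you pick the maximal one), use the pointwise bound $\log(q\,d^\sct),\log d^\dt\le\log\max\{q\,d^\sct,d^\dt\}$ to get $\hat J^E(\cdot,\cdot,\tilde E^\sct)\ge\bar J^{E,q}(\cdot,\cdot)+\numScattered\log q$ for competitors with $|\tilde E^\sct|=\numScattered$, and conclude via optimality of $(\rho,\momentum)$ for $\bar J^{E,q}$ together with equality for your chosen $E^\sct$. The chain of inequalities matches the paper's proof line for line.
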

\begin{proof}
	Take any $E^\sct$ satisfying $\underline E^\sct(q,\rho)\subset E^\sct\subset\overline E^\sct(q,\rho)$ and $\numScattered=|E^\sct|$.
	Now let $(\tilde\rho,\tilde\momentum,\tilde E^\sct)$ be a competitor for \eqref{eqn:combinatorial} satisfying $|\tilde E^\sct|=\numScattered$, then 
	\begin{align*}
	\hat J^E(\tilde\rho,\tilde\momentum,\tilde E^\sct)&=\norm{A\tilde\rho}-\int\log\left( \RNderivative{(p^\sct A^\sct\tilde\rho)}{\nu} \right)\d \tilde E^\sct-\int\log\left( \RNderivative{(p^\dt A^\dt\tilde\rho)}{\nu} \right)\d(E-\tilde E^\sct)+\beta\BBEnergy(\tilde\rho,\tilde\momentum)\\
	&\textstyle=\norm{A\tilde\rho}-\int \log(q\RNderivative{(p^\sct A^\sct\tilde\rho)}{\nu})\,\d \tilde E^\sct-\int \log(\RNderivative{(p^\dt A^\dt\tilde\rho)}{\nu})\,\d (E-\tilde E^s)+\numScattered\log(q)+\beta\BBEnergy(\tilde\rho,\tilde\momentum)\\
	&\ge \bar{J}^{E,q}(\tilde\rho,\tilde\momentum)+\numScattered\log q\\
	&\ge \bar{J}^{E,q}(\rho,\momentum)+\numScattered\log q\\
	&\textstyle=\norm{A\rho}-\int \log(q\RNderivative{(p^\sct A^\sct\rho)}{\nu})\,\d  E^\sct-\int \log(\RNderivative{(p^\dt A^\dt\rho)}{\nu})\,\d (E- E^\sct)+\numScattered\log(q)+\beta\BBEnergy(\rho,\momentum)\\
	&=\hat{J}^E(\rho,\momentum,E^\sct).
	\qedhere
	\end{align*}
\end{proof}

Even though the map assigning a number $\numScattered$ of scattered events to a tuning parameter $q$ may not be surjective
(that is, some $\numScattered$ may not be reached by any $q$),
it is still monotonically increasing (and therefore, due to the discreteness of $\numScattered$ also piecewise constant) as we show below.
This means that one can readily tune the number $\numScattered$ of measurements interpreted as scatter by in- or decreasing $q$ and that this number $\numScattered$ is robust to changes in $q$.
To state the result let us abbreviate by
\begin{align*}
\overline \numScattered(q)&=\max\big\{|\overline E^\sct(q,\rho)|\,\big|\,(\rho,\momentum)\text{ minimizes }\bar J^{E,q}\big\},\\
\underline \numScattered(q)&=\min\,\big\{|\underline E^\sct(q,\rho)|\,\big|\,(\rho,\momentum)\text{ minimizes }\bar J^{E,q}\big\}\leq\overline \numScattered(q)
\end{align*}
the maximum and minimum number of events that can be interpreted as scatter in a solution of \eqref{eqn:max}
(and thus by \cref{thm:debiasing} equivalently in a solution of \eqref{eqn:combinatorial}).

\begin{proposition}[Scatter interpretations as function of tuning parameter]\label{thm:pProperties}\hfill
	\begin{enumerate}
		\item
		There exists a monotonically increasing, piecewise constant, integer-valued function $\numScattered:[0,\infty)\to\N$ with $\numScattered(0)=0$ and $\numScattered(q)=|E|$ for all large enough $q$ such that
		$\overline \numScattered$ is the upper and $\underline \numScattered$ the lower semi-continuous envelope of $\numScattered$.
		
		\item
		The function $q\mapsto\min\bar J^{E,q}$ is non-increasing and continuous. Any minimizer $(\rho,\momentum)$ of $\bar J^{E,q}$ also minimizes $\bar J^{E,\tilde q}$ for any $\tilde q$ with $|\underline E^\sct(q,\rho)|\leq \numScattered(\tilde q)\leq|\overline E^\sct(q,\rho)|$.
	\end{enumerate}
\end{proposition}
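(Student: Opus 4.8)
The plan is to reduce both assertions to the one-dimensional convex-analytic structure of the value function in the variable $p:=\log q$. Set $\phi(p):=\min_{\rho,\momentum}\bar J^{E,e^p}(\rho,\momentum)$ and, for a fixed pair $(\rho,\momentum)$, consider $g_{\rho,\momentum}(p):=\bar J^{E,e^p}(\rho,\momentum)$. Writing each likelihood summand as $\log\max\{e^p\RNderivative{(p^\sct A^\sct\rho)}{\nu}(t,a,b),\RNderivative{(p^\dt A^\dt\rho)}{\nu}(t,a,b)\}=\max\{p+\log\RNderivative{(p^\sct A^\sct\rho)}{\nu}(t,a,b),\log\RNderivative{(p^\dt A^\dt\rho)}{\nu}(t,a,b)\}$, one sees that each summand is convex, nondecreasing and piecewise affine in $p$ with slopes $0$ and $1$ and a single breakpoint located exactly where $q\RNderivative{(p^\sct A^\sct\rho)}{\nu}=\RNderivative{(p^\dt A^\dt\rho)}{\nu}$. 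Hence $g_{\rho,\momentum}$ is concave, nonincreasing and piecewise affine with integer slopes in $\{0,-1,\dots,-\absNorm E\}$, and inspecting the kinks gives $(g_{\rho,\momentum})'_+(p)=-\absNorm{\overline E^\sct(q,\rho)}$ and $(g_{\rho,\momentum})'_-(p)=-\absNorm{\underline E^\sct(q,\rho)}$, so that the (concave) superdifferential is $\partial^+ g_{\rho,\momentum}(p)=[-\absNorm{\overline E^\sct(q,\rho)},-\absNorm{\underline E^\sct(q,\rho)}]$.

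As an infimum of concave nonincreasing functions, $\phi$ is concave and nonincreasing in $p$; since $q\mapsto p=\log q$ is an increasing homeomorphism, $\min\bar J^{E,q}=\phi(\log q)$ is nonincreasing in $q$ and, being a finite concave function, continuous on $(0,\infty)$, which is the first assertion of (2). Finiteness and attainment of the minimum rest on the existence result (the same proof as \cref{thm:ExistenceMinimizers}). Moreover, at any $p$ where $\phi$ is differentiable a minimizer $g_{\rho,\momentum}$ touches $\phi$ from above, which by comparison of difference quotients forces $g_{\rho,\momentum}$ to be differentiable there with the same slope; since that slope is an integer, $\phi$ has integer slopes wherever differentiable, and being concave with slopes confined to $[-\absNorm E,0]$ it is piecewise affine with finitely many breakpoints. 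Consequently $\phi'_+$ is an integer-valued, nonincreasing, right-continuous step function.

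Next I would prove the identities $\overline\numScattered(q)=-\phi'_+(p)$ and $\underline\numScattered(q)=-\phi'_-(p)$. From $g_{\rho,\momentum}\ge\phi$ with equality at $p$ one gets $\phi'_+(p)\le (g_{\rho,\momentum})'_+(p)$ and $\phi'_-(p)\ge (g_{\rho,\momentum})'_-(p)$ for every minimizer, whence $-\phi'_+(p)\ge\overline\numScattered(q)$ and $-\phi'_-(p)\le\underline\numScattered(q)$. For the reverse, on the (nondegenerate, by piecewise-linearity) affine piece of $\phi$ adjacent to $p$ on the right, pick an interior $p'$ and a minimizer $(\rho',\momentum')$ at $p'$; tangency plus concavity force $g_{\rho',\momentum'}=\phi$ on the whole closed piece, so $(\rho',\momentum')$ also minimizes at $p$ and satisfies $\absNorm{\overline E^\sct(q,\rho')}=-\phi'_+(p)$, giving $\overline\numScattered(q)\ge-\phi'_+(p)$; the left piece gives the companion bound for $\underline\numScattered$. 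Defining $\numScattered(q):=-\phi'_+(\log q)=\overline\numScattered(q)$ then furnishes a nondecreasing, piecewise constant, integer-valued function; it is right-continuous, hence equals its own upper semicontinuous envelope $\overline\numScattered$, while its lower semicontinuous (left-continuous) envelope equals $-\phi'_-=\underline\numScattered$ because $\phi'_+(p^-)=\phi'_-(p)$. The boundary values follow from the asymptotic slopes: testing $\phi$ with any fixed nonzero finite-energy $\rho$ yields $\phi(p)\le-\absNorm E\,p+O(1)$ as $p\to+\infty$, and testing with the pure-detection minimizer shows $\phi$ is bounded above as $p\to-\infty$; combined with concavity and $-\absNorm E\le\phi'\le0$ this forces slope $0$ on the left ray and $-\absNorm E$ on the right ray, i.e.\ $\numScattered(0)=0$ and $\numScattered(q)=\absNorm E$ for large $q$.

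For (2b) I would use a single supporting line. Let $(\rho,\momentum)$ minimize $\bar J^{E,q}$, so $g_{\rho,\momentum}\ge\phi$ with equality at $p$, and let $\tilde p=\log\tilde q$ satisfy $\absNorm{\underline E^\sct(q,\rho)}\le\numScattered(\tilde q)\le\absNorm{\overline E^\sct(q,\rho)}$, which by the derivative formulas means $m:=\phi'_+(\tilde p)\in[(g_{\rho,\momentum})'_+(p),(g_{\rho,\momentum})'_-(p)]=\partial^+ g_{\rho,\momentum}(p)$. The affine function $\ell(\cdot)=\phi(p)+m(\cdot-p)$ dominates $g_{\rho,\momentum}$ (since $m\in\partial^+ g_{\rho,\momentum}(p)$), while $m\in\partial^+\phi(\tilde p)$ gives $\phi(\tilde p)\ge\ell(\tilde p)$; combining with $\phi\le g_{\rho,\momentum}\le\ell$ yields $\ell(\tilde p)\le\phi(\tilde p)\le g_{\rho,\momentum}(\tilde p)\le\ell(\tilde p)$, so $g_{\rho,\momentum}(\tilde p)=\phi(\tilde p)$ and $(\rho,\momentum)$ minimizes $\bar J^{E,\tilde q}$. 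I expect the delicate step to be the two identities $\overline\numScattered=-\phi'_+$ and $\underline\numScattered=-\phi'_-$: the easy inequalities are immediate from tangency, but the reverse requires producing minimizers that realize the extreme scatter counts, which hinges on $\phi$ being genuinely piecewise affine (so adjacent pieces are nondegenerate) and on existence of minimizers at neighbouring parameters. Establishing the piecewise-affine, integer-slope structure of $\phi$ is thus the structural heart of the argument; once it is in place, the envelope description in (1) and the supporting-line argument for (2b) are routine.
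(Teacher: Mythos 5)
Your proof is correct, but it takes a genuinely different route from the paper's. The paper argues directly on the family $\{\overline\numScattered,\underline\numScattered\}$: monotonicity via a two-point comparison of minimizers at $q_1<q_2$, semicontinuity and the limit values via compactness of minimizing sequences together with the coercivity and forward-operator continuity results (\cref{thm:coercivity}, \cref{LemmaContinuityForwardOp}), and continuity of $q\mapsto\min\bar J^{E,q}$ as a pointwise infimum of Lipschitz functions of $\log q$. You instead extract everything from the one-dimensional convex-analytic structure of the value function $\phi(p)=\min\bar J^{E,e^p}$: concavity and monotonicity are immediate from the infimum of concave nonincreasing piecewise-affine functions, the integer-slope/piecewise-affine structure of $\phi$ follows from tangency at differentiability points, and the identities $\overline\numScattered=-\phi'_+$, $\underline\numScattered=-\phi'_-$ turn statement (1) into the standard description of the one-sided derivatives of a concave function and statement (2b) into a supporting-line argument. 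Your route buys a cleaner conceptual picture -- $\numScattered$ is literally the sensitivity $-\d(\min\bar J^{E,q})/\d\log q$, which sharpens the paper's informal remark that $\log q$ acts as a Lagrange multiplier -- and it gets the continuity and monotonicity of $q\mapsto\min\bar J^{E,q}$ for free; the paper's route, by contrast, produces the extremal minimizers realizing $\overline\numScattered$ and $\underline\numScattered$ as explicit weak-* limits and reuses compactness machinery needed elsewhere anyway. Both arguments lean on attainment of the minimum of $\bar J^{E,q}$ for every $q$ (the paper's existence remark), and your reverse inequalities $\overline\numScattered(q)\ge-\phi'_+(\log q)$ additionally need the adjacent affine pieces of $\phi$ to be nondegenerate, which you correctly secure from the finitely-many-breakpoints structure; these are the only places where the argument is not pure one-variable convex analysis, and you flag them appropriately.
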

\begin{proof}
	The first statement is an immediate consequence of the following three properties, which we will subsequently prove.
	\begin{enumerate}
		\item[(a)]\label{enm:monotonicity}
		$q_1<q_2$ implies $\overline \numScattered(q_1)\leq\underline \numScattered(q_2)$.
		\item[(b)]\label{enm:semi-continuity}
		$\overline \numScattered(q)$ is upper and $\underline \numScattered(q)$ lower semi-continuous in $q$.
		\item[(c)]\label{enm:limits}
		$\underline \numScattered(0)=0$ and $\overline \numScattered(q)=|E|$ for all $q$ large enough.
	\end{enumerate}
	(a) Let $q_1<q_2$ and assume $\overline \numScattered(q_1)>\underline \numScattered(q_2)$, where the maximum and minimum are realized by $(\rho_1,\momentum_1)$ and $(\rho_2,\momentum_2)$, respectively. Then
	\begin{align*}
	\bar{J}^{E,q_2}(\rho_1,\momentum_1)
	&\le  \norm{A\rho_1} -\int\log\left(q_2\RNderivative{(p^\sct A^\sct\rho_1)}{\nu}\right)\d\overline E^\sct(q_1, \rho_1)\\
	 &\quad\quad\quad-\int\log\left(\RNderivative{(p^\dt A^\dt\rho_1)}{\nu}\right)\d(E-\overline E^\sct(q_1,\rho_1)) +\beta\BBEnergy(\rho_1,\momentum_1)\\
	&= \bar J^{E,q_1}(\rho_1,\momentum_1)-\overline \numScattered(q_1)\log\frac{q_2}{q_1}\\
	&\le \bar J^{E,q_1}(\rho_2,\momentum_2)-\overline \numScattered(q_1)\log\frac{q_2}{q_1}\\
	&< \bar J^{E,q_1}(\rho_2,\momentum_2)-\underline \numScattered(q_2)\log\frac{q_2}{q_1}\\
	&\le-\underline \numScattered(q_2)\log\frac{q_2}{q_1}\!+\!\norm{A\rho_2}
	\!-\!\int\!\!\log\left(q_1\RNderivative{(p^\sct A^\sct\rho_2)}{\nu}\right)\d\underline E^\sct(q_2,\rho_2)
	\!\\
	&\quad\quad\quad-\! \int\!\!\log\left(\RNderivative{(p^\dt A^\dt\rho_2)}{\nu}\right)\d(E\!-\!\underline E^\sct(q_2,\rho_2))
	\!+\! \beta\BBEnergy(\rho_2,\momentum_2)\\
	&=\bar{J}^{E,q_2}(\rho_2,\momentum_2)
	\end{align*}
	contradicts the fact that $(\rho_2,\momentum_2)$ minimizes $\bar J^{E,q_2}$.\\
	(b) We show upper semi-continuity of $\overline \numScattered(q)$; lower semi-continuity of $\underline \numScattered(q)$ follows analogously. Let $q_n\to q$ and denote the corresponding minimizers from the definition of $\overline \numScattered(q_n)$ by $(\rho_n,\momentum_n)$. Without loss of generality we may assume $\limsup_{n\to\infty}\overline \numScattered(q_n)=\lim_{n\to\infty}\overline \numScattered(q_n)$ (else just pass to a sub-sequence). Due to $\bar{J}^{E,q_n}(\rho_n,\momentum_n)\le\bar{J}^{E,q_n}(\tilde\rho,0)\le\bar J^{E,0}(\tilde\rho, 0)<\infty$ for $\tilde\rho=\mathcal L^4\restr(\spacetime)$ the corresponding energies are uniformly bounded from which we can derive (\cref{thm:coercivity}) that the total variations $\norm{\rho_n}$ and $\norm{\momentum_n}$ are uniformly bounded.
	Consequently, there exists a weakly-* converging subsequence (still indexed by $n$) such that $(\rho_n,\momentum_n)\weakstarto(\rho,\momentum)$.
	Now in \cref{LemmaContinuityForwardOp} we will show continuity properties of the forward operator;
	in particular, we will show that the boundedness of $\BBEnergy(\rho_n,\momentum_n)$ and the weak-* convergence $(\rho_n,\momentum_n)\weakstarto(\rho,\momentum)$ imply
	uniform convergence $\RNderivative{(p^i A^i\rho_n)}{\nu}\to\RNderivative{(p^i A^i\rho)}{\nu}$ as $n\to\infty$ for $i\in\{\sct,\dt\}$.
	Together with the weak lower semi-continuity of $\BBEnergy$ this implies
	\begin{equation*}
	\bar J^{E, q}(\rho,\momentum)
	\leq\liminf_{n\to\infty}\bar J^{E,q_n}(\rho_n,\momentum_n)
	\le\lim_{n\to\infty}\bar J^{E,q_n}(\tilde\rho,\tilde\momentum)
	=\bar J^{E,q}(\tilde\rho,\tilde\momentum)
	\end{equation*}
	for any competitor $(\tilde\rho, \tilde\momentum)$, thus $(\rho,\momentum)$ minimizes $\bar J^{E, q}$.
	Furthermore, $\rho_n\weakstarto\rho$ implies \\  $\limsup_{n\to\infty}\overline E^\sct(q_n,\rho_n)=\overline E^\sct(q,\rho)$ because by continuity of the forward operator \cref{LemmaContinuityForwardOp}
	\begin{align*}
\limsup_{n\to\infty}\mathds{1}_{\overline E^\sct(q_n,\rho_n)}=\limsup_{n\to\infty}\mathds{1}_{\left\{q_n\RNderivative{A^\sct\rho_n}{\nu}(t,a,b)\ge\RNderivative{A^{\dt}\rho_n}{\nu}(t,a,b)\right\}}=\mathds{1}_{\overline E^\sct(q,\rho)}.
	\end{align*}
	Therefore
	\begin{equation*}
	\overline \numScattered(q)\geq|\overline E^\sct(q,\rho)|\geq\liminf_{n\to\infty}|\overline E^\sct(q_n,\rho_n)|=\lim_{n\to\infty}\overline \numScattered(q_n).
	\end{equation*}
	(c) $\underline \numScattered(0)=0$ follows from $\underline E^\sct(0,\rho)=\emptyset$ for all $\rho$.
	To show $\overline \numScattered(q)=|E|$ for $q$ large enough, assume to the contrary that there is an increasing sequence $q_n\to\infty$ with corresponding minimizers $(\rho_n,\momentum_n)$ of $\bar J^{E,q_n}$ and points $(t_n, a_n, b_n)\in E$ where $q_n\RNderivative{(p^\sct A^\sct\rho)}{\nu}(t,a,b)<\RNderivative{(p^\dt A^\dt\rho)}{\nu}(t,a,b)$.
	Again, $\bar J^{E,q_n}(\rho_n,\momentum_n)\le \bar J^{E,0}(\mathcal{L}^4,0)$ is uniformly bounded giving rise to a weakly-* converging subsequence $\rho_n\weakstarto\rho$.
	By continuity of the forward operator we thus have
	$$\RNderivative{(p^\sct A^\sct\rho_n)}{\nu}-\RNderivative{(p^\dt A^\dt\rho_n)}{\nu}/q_n\to\RNderivative{(p^\sct A^\sct\rho)}{\nu}$$
	uniformly as $n\to\infty$.
	Consequently, $\RNderivative{(p^\sct A^\sct\rho_n)}{\nu}(t_n,a_n,b_n)\leq\RNderivative{(p^\dt A^\dt\rho_n)}{\nu}(t_n,a_n,b_n)/q_n$ for arbitrary $n$ can only hold for $\rho=0$.
	The nonnegativity of $\rho_n$ therefore implies $\rho_n\to 0$ strongly and hence
	\begin{align*}
	-\log\max\left\{q_n\RNderivative{(p^\sct A^\sct\rho_n)}{\nu}(t_n, a_n, b_n),\RNderivative{(p^\dt A^\dt\rho_n)}{\nu}(t_n, a_n, b_n)\right\}
	=&-\log\RNderivative{(p^\dt A^\dt\rho_n)}{\nu}(t_n, a_n, b_n)\\
	\geq&-\log(C\norm{\rho_n})\to\infty
		\end{align*}
	as $n\to\infty$.
	This in turn implies $\bar J^{E,q_n}(\rho_n,\momentum_n)\to\infty$ contradicting the uniform boundedness of $\bar J^{E, q_n}(\rho_n,\momentum_n)$.\newline
	
	As for the second statement, let $|\underline E^\sct(q,\rho)|\leq \numScattered(\tilde q)\leq|\overline E^\sct(q,\rho)|$
	and let $(\tilde\rho,\tilde\momentum)$ minimize $\bar J^{E,\tilde q}$ with $|\underline E^\sct(\tilde q,\tilde\rho)|\leq \numScattered(\tilde q)\leq|\overline E^\sct(\tilde q,\tilde\rho)|$ (such a minimizers exists by definition of $\numScattered$). Then
	\begin{equation*}
	\bar J^{E,\tilde q}(\rho,\momentum)
	\leq\bar J^{E,q}(\rho,\momentum)-\numScattered(\tilde q)\log\frac{\tilde q}q
	\leq\bar J^{E,q}(\tilde\rho,\tilde\momentum)-\numScattered(\tilde q)\log\frac{\tilde q}q
	\leq\bar J^{E,\tilde q}(\tilde\rho,\tilde\momentum)
	\end{equation*}
	so that $(\rho,\momentum)$ minimizes $\bar J^{E,\tilde q}$.
	Finally, we show that $q\mapsto\min\bar J^{E,q}$ is nonincreasing and continuous. Since for $(\rho,\momentum)\in\measp(\spacetime)\times\meas(\spacetime)^3$ the functions $q\mapsto\bar J^{E,q}(\rho,\momentum)$ are not Lipschitz, we cannot conclude continuity of the pointwise minimum. Instead, we consider $r\mapsto\min\bar J^{E,\exp(r)}$ which is the pointwise infimum over all maps $r\mapsto\bar J^{E,\exp(r)}(\rho,\momentum)$ for $(\rho,\momentum)\in\measp(\spacetime)\times\meas(\spacetime)^3$.
	Since each of these maps is nonincreasing and Lipschitz continuous with Lipschitz constant bounded by $|E|$, so is their pointwise infimum.
	As a consequence, $q\mapsto\min\bar J^{E,q}$ is nonincreasing and continuous.
\end{proof}

As a last simplifying step we convexify the energy $\bar J^{E,q}$ by replacing $-\log\max\{a,b\}$ with its convex envelope $-\log(a+b)$. We thus arrive at our final reconstruction functional
\begin{equation}\label{eqn:finalFunctional}
J^{E,q}(\rho,\momentum)=\norm{A\rho}-\int\log\left(\RNderivative{(\Aq{q}\rho)}{\nu}\right)\d E+\beta\BBEnergy(\rho,\momentum)
\qquad\text{with}\qquad
\Aq{q}=qp^\sct A^\sct+p^\dt A^\dt,
\end{equation}
where by convention we set $J^{E,q}(\rho,\momentum)=\infty$ if $\rho\geq0$ or \eqref{eqn:continuity} are violated.\\
As we will verify in exemplary calculations below, choosing $q$ appropriately indeed removes the bias.

\subsection{Bias removal by sufficiently high tuning parameter}\label{sec:biasRemoval}
We resume the two PET-like examples from \cref{sec:bias} showing that our new reconstruction functional effectively removes the bias.

First recall the setting on the periodic domain $D=[0,1]$ with ground truth, forward operator, and measurement
\begin{equation*}
\rho^\dagger=\numParticles\delta_{x_0},
\qquad
A\rho=p^\sct\|\rho\|\mathcal{L}\restr D+(1-p^\sct)G\ast\rho,
\qquad\text{and}\qquad
E=\numNonScattered\delta_{x_0}+\sum_{i=1}^{\numParticles-\numNonScattered}\delta_{x_i}.
\end{equation*}
The modified reconstruction functional in this setting reads
\begin{align*}
J^{E,q}(\rho)
&=\norm{\rho}-\int\log\left(qp^\sct\|\rho\|+(1-p^\sct)G\ast\rho\right)\d E\\
&=\norm\rho-\numNonScattered\log\left(qp^\sct\|\rho\|+(1-p^\sct)(G\ast\rho)(x_0)\right)-\sum_{i=1}^{\numParticles-\numNonScattered}\log\left(qp^\sct\|\rho\|+(1-p^\sct)(G\ast\rho)(x_i)\right).
\end{align*}
Again, if the $x_i$ are sufficiently far from each other with respect to the width of $G$,
this is minimized by a measure of the form $\rho=\alpha\delta_{x_0}+\beta\sum_{i=1}^{\numParticles-\numNonScattered}\delta_{x_i}$, for which $(G\ast\rho)(x_0)=G(0)\alpha$ and $(G\ast\rho)(x_i)=G(0)\beta$.
It is now straightforward to check via the optimality conditions of minimizing $J^{E,q}$ for $\alpha,\beta\geq0$
that the unique solution is given by $\alpha=\numParticles$ and $\beta=0$ or equivalently $\rho=\rho^\dagger$ if and only if the tuning parameter is chosen sufficiently large,
\begin{equation*}
q\geq\frac{(1-p^\sct)G(0)}{p^\sct(\numNonScattered-1)}.
\end{equation*}
Essentially, this confirms the expectation that the tuning parameter $q$ has to be so large
that at every scattered measurement, $q$ times the scatter part of the forward operator must dominate the nonscatter part of a hallucinated Dirac measure.
Then scattered events are no longer interpreted as nonscattered.
Of course the above calculation breaks down if the total mass $\numParticles$ is so large
that the $p^\sct \numParticles$ many scattered events come within distance of the positron range kernel diameter of each other (this diameter roughly behaves like $1/G(0)$);
in that case $\frac{(1-p^\sct)G(0)}{p^\sct(\numNonScattered-1)}\lesssim1$ and the parameter $q$ is not needed (meaning that it can be set to one).

In the second, discrete measurement setting we used
\begin{equation*}
\rho^\dagger=\numParticles\delta_{x_0},
\qquad
A\rho=B_\discrete(p^\sct\|\rho\|\mathcal{L}\restr D+(1-p^\sct)\rho),
\qquad\text{and}\qquad
E=\numNonScattered\delta_{z_j}+\sum_{i\in I}\delta_{z_i}
\end{equation*}
for $z_i$ the centres of the $M$ discrete detector intervals, $I\subset\{1,\ldots,M\}$ with $\numParticles-\numNonScattered$ elements, and $x_0\in\Gamma_j$.
The modified reconstruction functional in this setting reads
\begin{align*}
J^{E,q}(\rho)
&=\norm{\rho}-\int\log\left(\RNderivative{\Aq{q}\rho}{\nu}\right)\d E\\
&=\norm\rho-\numNonScattered\log\left(\tfrac{qp^\sct\|\rho\|}M+(1-p^\sct)\rho(\Gamma_j)\right)-\sum_{i\in I}\log\left(\tfrac{qp^\sct\|\rho\|}M+(1-p^\sct)\rho(\Gamma_i)\right).
\end{align*}
It is readily seen that this is minimized by a measure of the form $\rho=\alpha\delta_{x_0}+\beta\sum_{i\in I}\delta_{z_i}$
(of course, the Dirac masses may also be arbitrarily shifted within each $\Gamma_i$, since the reconstruction functional is oblivious to the exact position within $\Gamma_i$).
Again by checking the optimality conditions we obtain that $(\alpha,\beta)=(\numParticles,0)$ or equivalently $\rho=\rho^\dagger$ minimizes $J^{E,q}$ if and only if
\begin{equation*}
q\geq\frac{(1-p^\sct)M}{p^\sct(\numNonScattered-1)}.
\end{equation*}
Compared to the previous case with positron range, the positron length scale $1/G(0)$ was simply replaced with the detector length scale $1/M$.
Again, the bias to interpret scattered events as nonscattered is removed, if $q$ times the scatter part of the forward operator dominates the nonscatter part of a hallucinated Dirac measure.
If the total mass is large enough to fill every detector with scatter events, $q$ is again no longer needed and can be set to one.

These observations can be summarized in the following heuristic for model \eqref{eqn:finalFunctional}.
\begin{remark}[Heuristic for choice of $q$]\label{rem:heuristicDebiasing}
	In the discrete setting, the probability of a scattered photon pair being detected in the detector pair $(\Gamma_i,\Gamma_j)$ during time interval $\tau_k$
	is $p^\sct A^\sct\rho^\dagger=\Delta Tp^\sct\norm{\rho^\dagger}\hd^2(\Gamma_i)\hd^2(\Gamma_j)/\hd^2(\domDelta)^2$.
	On the other hand the probability that a nonscattered photon pair is detected in $(\Gamma_i,\Gamma_j)$,
	which emanated from some Dirac mass $\delta_x$ at a point $x$ in between $\Gamma_i$ and $\Gamma_j$,
	is roughly $\Delta Tp^\dt\min\{\hd^2(\Gamma_i)/\mathrm{dist}(x,\Gamma_i),\hd^2(\Gamma_j)^2/\mathrm{dist}(x,\Gamma_j)^2\}$.
	Now $q$ should be chosen larger than the ratio between the latter and the former probability,
	\begin{equation*}
	q\geq\max_{i\neq j,x\in\dom}\frac{p^\dt\hd^2(\domDelta)^2}{p^\sct\norm{\rho^\dagger}\hd^2(\Gamma_i)\hd^2(\Gamma_j)}
	\min\left\{\frac{\hd^2(\Gamma_i)}{\mathrm{dist}(x,\Gamma_i)^2},\frac{\hd^2(\Gamma_j)}{\mathrm{dist}(x,\Gamma_j)^2}\right\}.
	\end{equation*}
	In addition we should pick $q\geq1$ (recall that $q=1$ yields the original MAP estimate).
	For $M$ detectors of equal area $\hd^2(\partial\domDelta)/M$ this becomes
	\begin{equation*}
	q\geq\max\left\{1,\frac{p^\dt M}{p^\sct\norm{\rho^\dagger}}\frac{\hd^2(\domDelta)}{\delta^2}\right\}.
	\end{equation*}
	In the continuous setting, using an analogous reasoning, the number $M$ of detectors simply has to be replaced with $G(0)$, the maximum of the positron range kernel, yielding
	\begin{equation*}
	q\geq\max\left\{1,\frac{p^\dt G(0)}{p^\sct\norm{\rho^\dagger}}\frac{\hd^2(\domDelta)}{\delta^2}\right\}.
	\end{equation*}
\end{remark}

\section{Model properties}
In this section we show existence of minimizers to \eqref{eqn:finalFunctional}, derive the dual optimization problem, and prove some scale invariances of the reconstruction problem.

\subsection{Existence of minimizers and properties of forward operator}\label{sec:existence}
As a preparation to prove existence of minimizers to $J^{E,q}$ we require some continuity properties of the forward operator.
To this end it is convenient to rewrite the detection part $A^\dt$ with the help of the so-called X-ray transform:
For $\theta\in S^2$ a vector in the unit sphere define
\begin{equation*}
\theta^\perp=\{s\in\R^3\,|\,s\cdot\theta=0\}
\end{equation*}
to be the orthogonal complement of $\theta$,
and let $\pi_{\theta^\perp}:\R^3\to\theta^\perp$ denote the orthogonal projection onto $\theta^\perp$.
The X-ray transform is then defined as
\begin{align}\label{definitionXRayTransform}
P:L^1(\domDeltaHalf)\to L^1(\mathcal C),
\quad
Pf(\theta,s)=\int_{\{r\in\R\,|\,s+r\theta\in\domDeltaHalf\}} f(s+r\theta)\,\d\mathcal L(r),\\
\qquad\text{where }
\mathcal C=\{(\theta,s)\in S^2\times\R^3\,|\,s\in\pi_{\theta^\perp}(\domDeltaHalf)\}\nonumber.
\end{align}
Note that the X-ray transform satisfies the symmetry $Pf(\theta,s)=Pf(-\theta,s)$. 
On $\mathcal C$ we will use the Borel measure $\hd^2\otimes\hd^2$, defined by dual pairing with any continuous function $f:\mathcal C\to\R$ as
\begin{equation*}
\int_{\mathcal C}f\,\d(\hd^2\otimes\hd^2)
=\int_{S^2}\int_{\theta^\perp}f(\theta,s)\,\d\hd^2(s)\,\d\hd^2(\theta).
\end{equation*}

Furthermore, the convolution $G\ast\lambda$ of some $\lambda\in\measp(\dom)$ with the continuous positron range kernel $G$ is absolutely continous with respect to $\mathcal L^3$
(it is even a continuous function) and will therefore be identified with its $\mathcal L^3$-density so that we may for instance write $P[G\ast\lambda]$.

\begin{lemma}[Scatterless detection operator]\label{thm:scatterlessDetection}
In the nondiscrete setting, for any $\lambda\in\measp(\dom)$ we have $A^\dt\lambda=\pushforward{R_\mathcal{C}}{[\frac1{4\pi}P[G\ast\lambda]\cdot(\hd^2\otimes\hd^2)]}$ with 
\begin{align*}
	R_\mathcal{C}\colon\mathcal{C}\to\boundDomDelta, \quad(\theta,x)\mapsto R(x,\theta).
\end{align*}
\end{lemma}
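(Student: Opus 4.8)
The plan is to unfold the definition of $A^\dt$ and then reduce the claimed identity to a fibrewise application of Fubini's theorem in which the X-ray transform emerges naturally. First I would write out the composition $A^\dt=B_{\mathrm{detectors}}B_{\mathrm{lines}}B_{\mathrm{pr}}$, which gives
\begin{equation*}
A^\dt\lambda=\pushforward{R}{\big[(G\ast\lambda)\otimes\vol_{S^2}\big]},
\end{equation*}
where, as noted just before the lemma, $g:=G\ast\lambda$ is a continuous function supported in $\domDeltaHalf$ (since $\mathrm{supp}\,G\subset B_{\delta/2}(0)$ and $\mathrm{supp}\,\lambda\subset\dom$). Since two Radon measures on $\boundDomDelta$ coincide once they integrate every continuous test function $\phi$ identically, it suffices to establish
\begin{equation*}
\int_{\domDeltaHalf\times S^2}\phi(R(x,v))\,g(x)\,\d\mathcal{L}^3(x)\,\d\vol_{S^2}(v)
=\frac1{4\pi}\int_{\mathcal{C}}\phi(R_{\mathcal{C}}(\theta,s))\,Pg(\theta,s)\,\d(\hd^2\otimes\hd^2)(\theta,s).
\end{equation*}

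Next I would exploit that $\vol_{S^2}=\frac1{4\pi}\hd^2\restr S^2$ and, for each fixed direction $\theta\in S^2$, split $\R^3=\theta^\perp\oplus\R\theta$ by writing $x=s+r\theta$ with $s=\pi_{\theta^\perp}(x)\in\theta^\perp$ and $r\in\R$, so that $\d\mathcal{L}^3(x)=\d r\,\d\hd^2(s)$. The crucial geometric observation is that $R(s+r\theta,\theta)=\partial\domDelta\cap(s+\R\theta)$ is independent of $r$ and equals $R_{\mathcal{C}}(\theta,s)$; this is precisely why the detection point may be reparametrised by the line rather than by the emission point along it. Pulling the $r$-independent factor $\phi(R_{\mathcal{C}}(\theta,s))$ out of the inner integral leaves $\int_{\{r\,:\,s+r\theta\in\domDeltaHalf\}}g(s+r\theta)\,\d r=Pg(\theta,s)$ by the definition \eqref{definitionXRayTransform} of the X-ray transform. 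Reassembling the $\theta$-integration against $\frac1{4\pi}\hd^2\restr S^2$ and comparing with the reference measure on $\mathcal{C}$ then yields the displayed identity, and hence the lemma.

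Two points warrant care and constitute the only genuine obstacles. First, one must check that $R_{\mathcal{C}}$ is well defined on $\mathcal{C}$: for $(\theta,s)\in\mathcal{C}$ the point $s$ lies in $\pi_{\theta^\perp}(\domDeltaHalf)$, and since $\domDeltaHalf\subset\mathrm{int}(\domDelta)$ (because $\mathrm{dist}(\dom,\partial\domDelta)\ge\delta$ forces every point of $\domDeltaHalf$ to sit at distance at least $\delta/2$ from $\partial\domDelta$) with $\domDelta$ convex, the line $s+\R\theta$ meets $\partial\domDelta$ in exactly two points, so that $R_{\mathcal{C}}$ is well defined and $R$ is continuous on the interior set $\domDeltaHalf\times S^2$. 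Second, because $g$ is supported in $\domDeltaHalf$ we have $Pg(\theta,s)=0$ whenever $s\notin\pi_{\theta^\perp}(\domDeltaHalf)$, so the $s$-integral over $\theta^\perp$ automatically restricts to the $\theta$-slice of $\mathcal{C}$, matching the right-hand side; joint measurability of $(\theta,s)\mapsto\phi(R_{\mathcal{C}}(\theta,s))Pg(\theta,s)$ together with integrability (from $g\in L^1$ and boundedness of $\phi$) justifies the use of Fubini throughout. Finally, the symmetry $Pg(\theta,s)=Pg(-\theta,s)$ together with $R_{\mathcal{C}}(-\theta,s)$ being the detector pair of $R_{\mathcal{C}}(\theta,s)$ with swapped entries confirms that the double cover by $S^2$ is consistent with the orientation convention and introduces no spurious factor of two.
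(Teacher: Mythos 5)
Your proposal is correct and follows essentially the same route as the paper: unfold $A^\dt=\pushforward{R}{[(G\ast\lambda)\otimes\vol_{S^2}]}$ and reduce to the identity $\pushforward{F}{((f\,\mathcal L^3\restr\domDeltaHalf)\otimes\vol_{S^2})}=\frac1{4\pi}Pf\cdot(\hd^2\otimes\hd^2)$ for $F(x,v)=(v,\pi_{v^\perp}(x))$, which the paper declares straightforward and you verify explicitly by testing against continuous $\phi$ and slicing $\mathcal L^3$ along lines $s+\R\theta$ via Fubini. Your additional checks (well-definedness of $R_\mathcal C$, support of $Pg$, the $S^2$ double-cover symmetry) only flesh out details the paper leaves implicit.
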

\begin{proof}
Let 
\begin{equation*}
F\colon \domDeltaHalf\times S^2\to\mathcal C,
\qquad(x, v)\mapsto(v, \pi_{v^\perp}(x)).
\end{equation*}
This way, it holds $R=R_\mathcal{C}\circ F$. Thus, for any measure $\mu\in\measp(\domDeltaHalf\times S^2)$ we have $\pushforward{R}\mu=\pushforward {R_\mathcal{C}}{\pushforward{F}\mu}$. The scatterless detection operator is then given by
\begin{equation*}
A^\dt\lambda
=\pushforward{R}{(B_{\mathrm{lines}} B_{\mathrm{pr}}\lambda)}
=\pushforward{R_\mathcal{C}}{\pushforward{F}{(B_{\mathrm{lines}} B_{\mathrm{pr}}\lambda)}}
=\pushforward{R_\mathcal{C}}{\pushforward{F}{(G\ast\lambda)\otimes\vol_{S^2}}}.
\end{equation*}
The desired result now follows from the straightforward relation
\begin{equation*}
\frac1{4\pi}P(f)\cdot(\hd^2\otimes\hd^2)
=\pushforward{F}{((f\mathcal L^3\restr\domDeltaHalf)\otimes\vol_{S^2})}
\end{equation*}
for any $f\in L^1(\domDeltaHalf)$.
\end{proof}

Next we estimate the density of the scatterless detection intensity with respect to the Hausdorff measure on $\boundDomDelta$.
To this end, for $(a,b)\in\boundDomDelta$ we abbreviate
\begin{equation*}
\theta(a,b)=\frac{b-a}{|b-a|}\in S^2,\qquad
s(a,b)=\pi_{\theta(a,b)^\perp}(a).
\end{equation*}

\begin{lemma}[Density of scatterless detection]\label{LemmaForwardDensitySplitting}
In the nondiscrete setting, for any $\lambda\in\measp(\dom)$ we have
\begin{equation*}
\RNderivative{A^\dt\lambda}{(\hd^2\restr\partial\domDelta)\otimes(\hd^2\restr\partial\domDelta)}(a,b)
=g(a,b)P[G\ast\lambda](\theta(a,b),s(a,b))
\end{equation*}
for some bounded smooth function $g:\boundDomDelta\to(0,\infty)$.
\end{lemma}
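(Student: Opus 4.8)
\emph{Proof plan.} The strategy is to feed the representation from \cref{thm:scatterlessDetection} into a change of variables for the map $R_\mathcal{C}$. By that lemma,
\begin{equation*}
A^\dt\lambda=\pushforward{(R_\mathcal{C})}{\left[\tfrac1{4\pi}P[G\ast\lambda]\cdot(\hd^2\otimes\hd^2)\right]},
\end{equation*}
so computing the claimed density amounts to transporting the absolutely continuous measure $\tfrac1{4\pi}P[G\ast\lambda]\cdot(\hd^2\otimes\hd^2)$ on $\mathcal C$ through $R_\mathcal{C}$ and rewriting it with respect to $(\hd^2\restr\partial\domDelta)\otimes(\hd^2\restr\partial\domDelta)$ on $\boundDomDelta$. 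First I would verify that $R_\mathcal{C}$ is a diffeomorphism from $\mathcal C$ onto its image with smooth inverse $(a,b)\mapsto(\theta(a,b),s(a,b))$: since $\domDelta$ is convex with smooth boundary, every line with $(\theta,s)\in\mathcal C$ meets $\domDeltaHalf$ and therefore crosses $\partial\domDelta$ transversally in exactly two points, and the sign convention $\theta=(b-a)/\absNorm{b-a}$ from the definition of $R$ orders these two points (so that $(\theta,s)$ and $(-\theta,s)$ map to $(a,b)$ and $(b,a)$), making $R_\mathcal{C}$ a bijection. Because $\domDeltaHalf$ is a compact subset of the interior of $\domDelta$, no tangent line to $\partial\domDelta$ meets $\domDeltaHalf$; by compactness of the set of lines meeting $\domDeltaHalf$ the angles $\alpha_a,\alpha_b$ between $\theta$ and the outer unit normals $n_a,n_b$ then satisfy $\absNorm{\cos\alpha_a},\absNorm{\cos\alpha_b}\geq c>0$ and the chord length $\absNorm{b-a}$ is bounded above and below.

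Applying the area formula to this diffeomorphism yields
\begin{equation*}
\RNderivative{A^\dt\lambda}{(\hd^2\restr\partial\domDelta)\otimes(\hd^2\restr\partial\domDelta)}(a,b)=\tfrac1{4\pi}P[G\ast\lambda](\theta(a,b),s(a,b))\,\mathcal J(a,b),
\end{equation*}
where $\mathcal J(a,b)$ is the Jacobian factor converting $\hd^2\otimes\hd^2$ on $\mathcal C$ into $(\hd^2\restr\partial\domDelta)\otimes(\hd^2\restr\partial\domDelta)$ on $\boundDomDelta$; we then set $g=\tfrac1{4\pi}\mathcal J$. The principal obstacle, which I expect to be the bulk of the work, is the explicit computation of $\mathcal J$. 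The plan is to differentiate the inverse map $(a,b)\mapsto(\theta(a,b),s(a,b))$ using orthonormal frames of the tangent planes $T_a\partial\domDelta$ and $T_b\partial\domDelta$. From $\theta=(b-a)/\absNorm{b-a}$ one gets $\d\theta=\tfrac1{\absNorm{b-a}}\pi_{\theta^\perp}(\d b-\d a)$, and $s=\pi_{\theta^\perp}(a)$ gives a corresponding expression for $\d s$; writing the resulting map in $\theta^\perp$-coordinates as a $4\times4$ block matrix and taking its determinant should give, after simplification, the classical integral-geometric identity
\begin{equation*}
\mathcal J(a,b)=\frac{\absNorm{\dotProd{\theta(a,b)}{n_a}\,\dotProd{\theta(a,b)}{n_b}}}{\absNorm{b-a}^2}=\frac{\absNorm{\cos\alpha_a\cos\alpha_b}}{\absNorm{b-a}^2},
\end{equation*}
where the two cosine factors arise because the orthogonal projection $\pi_{\theta^\perp}$ restricted to a tangent plane with unit normal $n$ scales area by $\absNorm{\dotProd{\theta}{n}}$.

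Finally I would read off the asserted properties of $g=\tfrac1{4\pi}\mathcal J$. On the set of transversal chords, in particular on all chords meeting $\domDeltaHalf$ (where alone $P[G\ast\lambda]$ can be nonzero), the uniform bounds on $\absNorm{\cos\alpha_a},\absNorm{\cos\alpha_b}$ and on $\absNorm{b-a}$ established above make $g$ smooth, strictly positive and bounded. A short expansion near the diagonal $a=b$, where $\dotProd{\theta}{n_a}$, $\dotProd{\theta}{n_b}$ and $\absNorm{b-a}$ all vanish to first order in the arc-length separation with ratio governed by the boundary curvature, shows that $g$ extends smoothly and positively across the measure-zero diagonal, so the Radon--Nikodym derivative is unaffected there and the formula holds on all of $\boundDomDelta$.
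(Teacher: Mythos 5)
Your proposal is correct in substance and follows the same overall strategy as the paper: both start from \cref{thm:scatterlessDetection} and obtain the density by a change of variables for the pushforward under $R_\mathcal{C}$, i.e.\ the area formula applied to the inverse map $(a,b)\mapsto(\theta(a,b),s(a,b))$. The execution differs in a way worth noting. The paper never computes the Jacobian: it first derives $\RNderivative{(\hd^2\otimes\mathcal L^2)\restr\mathcal C}{\hd^4\restr\mathcal C}=1/\sqrt{1+\absNorm{s}^2}$ via an explicit rotation-matrix parametrization of $\mathcal C$, then invokes Federer's transformation rule and leaves the factor $J\bar R$ abstract, establishing only its smoothness from the fact that $\bar R$ extends smoothly to the complement of the diagonal of $\R^3\times\R^3$ and that $R_\mathcal{C}(\mathcal C)$ stays bounded away from it. You instead aim for the closed form $g(a,b)=\absNorm{\dotProd{\theta(a,b)}{n_a}\dotProd{\theta(a,b)}{n_b}}/(4\pi\absNorm{b-a}^2)$, the classical integral-geometric density of lines in chord coordinates; this is indeed the correct value of the paper's $J\bar R/(4\pi\sqrt{1+\absNorm{s}^2})$ and is more informative, but the $4\times4$ determinant that produces it is only asserted (``should give, after simplification''), and that computation is the actual content of the step, so it would need to be written out. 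One further caution: positivity, boundedness and smoothness of $g$ only need to be (and only can be) verified on the closure of $R_\mathcal{C}(\mathcal C)$, where $\absNorm{b-a}\geq\delta$ and your transversality bounds hold; outside that set both sides of the claimed identity vanish and $g$ may be extended arbitrarily. Your expansion of $g$ across the diagonal is therefore unnecessary, and it would in fact require strict convexity (nonvanishing curvature) of $\partial\domDelta$, which the paper does not assume — the paper sidesteps this by restricting to $R_\mathcal{C}(\mathcal C)$ for exactly this reason.
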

\begin{proof}
We aim to apply the transformation rule for integrals on Lipschitz manifolds (see for instance \cite[\S\,3.2.5, \S\,3.2.22]{FedererGMT}, \cite[\S\,3.3.2]{EvGa15}, \cite[Thm.\,2.71]{AmFuPa00}).
This transformation rule involves the Jacobian $JT$ of a Lipschitz map $T$ between Lipschitz manifolds $X,Y$, which is Hausdorff-almost everywhere defined as
\begin{equation*}
JT=\sqrt{\det(DT^*DT)}
\end{equation*}
with $DT:TX\to TY$ the differential of $T$ (a linear operator between the tangent spaces to $X$ and $Y$) and $DT^*$ its adjoint.

We first note that $\mathcal C$ is a smooth four-dimensional manifold embedded in $\R^6$ and thus has $\hd^4\restr\mathcal C$ as its volume measure.
However, above we employed the measure $\hd^2\otimes\mathcal L^2$ on $\mathcal C$.
We now show
\begin{equation*}
\RNderivative{(\hd^2\otimes\mathcal L^2)\restr\mathcal C}{\hd^4\restr\mathcal C}(\theta,s)
=\frac1{\sqrt{1+|s|^2}}.
\end{equation*}
To this end let $e_1,e_2,e_3$ denote the standard Euclidean basis vectors of $\R^3$ and define $T:S^2\times\R^2\to S^2\times\R^3$, $T(\theta,x)=(\theta,R_\theta{x\choose 0})$, where
\begin{equation*}
R_\theta=I+(\theta e_3^T-e_3\theta^T)+\frac1{1+e_3\cdot\theta}(\theta e_3^T-e_3\theta^T)^2
\end{equation*}
is the smallest three-dimensional rotation of $e_3$ onto $\theta$
(it obviously leaves vectors orthogonal to $e_3$ and $\theta$ invariant, and it can readily be checked that $R_\theta e_3=\theta$).
Note that $R_\theta e_1,R_\theta e_2$ represents an orthonormal basis of the tangent space $T_\theta S^2$ to $S^2$ in $\theta$.
It is straightforward to calculate that
in the basis $R_\theta e_1,R_\theta e_2,e_1,e_2$ of $T_{(\theta,x)}(S^2\times\R^2)$
and the basis $R_\theta e_1,R_\theta e_2,e_1,e_2,e_3$ of $T_{(\theta,x)}(S^2\times\R^3)$
the differential $DT(\theta,x)$ has the representation
\begin{equation*}
DT(\theta,x)=\left(\begin{smallmatrix}I&0\\C&R_\theta(e_1|e_2)\end{smallmatrix}\right),
\end{equation*}
where $I\in\R^{2\times2}$ denotes the identity matrix and $C=(\partial_\theta(R_\theta{x\choose0})(R_\theta e_1)|\partial_\theta(R_\theta{x\choose0})(R_\theta e_2))$.
Abbreviating $\tilde C=(e_1|e_2)^TR_\theta^TC$ we calculate
\begin{align*}
\det(DT^*DT)
&=\det\left(\begin{smallmatrix}I+C^TC&\tilde C^T\\\tilde C&I\end{smallmatrix}\right)\\
&=\det(I)\det(I+C^TC-\tilde C^TI^{-1}\tilde C)\\
&=\det(I+C^TC-(C^T-R_\theta(0|0|e_3)C^T)(C^T-R_\theta(0|0|e_3)C^T)^T)\\
&=\det(I+C^TC-C^T(I-\theta\otimes\theta)C)\\
&=\det(I+(C^T\theta)\otimes(C^T\theta))\\
&=1+|C^T\theta|^2.
\end{align*}
After a few tedious but straightforward steps of calculation one obtains $C^T\theta=(e_1|e_2)^TR_\theta^T(\frac{e_3\otimes\theta}{1+e_3\cdot\theta}-I)x$
as well as $|C^T\theta|^2=|x|^2$.
Now for any continuous function $f:\R^3\times\R^3\to\R$, by \cite[\S\,3.2.22]{FedererGMT} we have
\begin{multline*}
\int_{\{(\theta,s)\in S^2\times\R^3\,|\,s\in\theta^\perp\}}f\frac1{JT\circ T^{-1}}\,\d\hd^4
=\int_{S^2\times\R^2}f\circ T\,\d\hd^4
=\int_{S^2}\int_{\R^2}f\circ T(\theta,x)\,\d\mathcal L^2(x)\,\d\hd^2(\theta)\\
=\int_{S^2}\int_{\theta^\perp}f(\theta,s)\,\d\mathcal L^2(s)\,\d\hd^2(\theta)
=\int_{\{(\theta,s)\in S^2\times\R^3\,|\,s\in\theta^\perp\}}f\,\d(\hd^2\otimes\mathcal L^2),
\end{multline*}
therefore, as desired,
\begin{equation*}
\RNderivative{(\hd^2\otimes\mathcal L^2)\restr\mathcal C}{\hd^4\restr\mathcal C}(\theta,s)
=\frac1{JT\circ T^{-1}(\theta,s)}
=\frac1{\sqrt{\det(DT^*DT)}(\theta,R_\theta^Ts)}
=\frac1{\sqrt{1+|R_\theta^Ts|^2}}
=\frac1{\sqrt{1+|s|^2}}.
\end{equation*}

We now calculate the density of $A^\dt\lambda$ with respect to $(\hd^2\restr\partial\domDelta)\otimes(\hd^2\restr\partial\domDelta)$.
To this end let us introduce the diagonal $\Delta=\{(a,a)\,|\,a\in\R^3\}$ of $\R^3\times\R^3$ and the map
\begin{equation*}
\bar R:\boundDomDelta\setminus\Delta\to\{(\theta,s)\in S^2\times\R^3\,|\,s\in\theta^\perp\},\qquad
(a,b)\mapsto(\theta(a,b),s(a,b))
\end{equation*}
(which can be thought of as the inverse of $R_\mathcal{C}$ from \cref{thm:scatterlessDetection} and in fact is the inverse when restricting it to the range of $R_\mathcal{C}$).
Now consider an arbitrary continuous function $f:\boundDomDelta\to\R$.
By \cref{thm:scatterlessDetection} and \cite[\S\,3.2.22]{FedererGMT} we have
\begin{align*}
\int_{R_\mathcal{C}(\mathcal C)}f\,\d A^\dt\lambda
&=\int_{\mathcal C}f\circ 
R_\mathcal{C}\,\frac1{4\pi}P[G\ast\lambda]\,\d(\hd^2\otimes\mathcal L^2)\\
&=\int_{\mathcal C}f\circ R_\mathcal{C}(\theta,s)\,\frac1{4\pi}P[G\ast\lambda](\theta,s)\frac1{\sqrt{1+|s|^2}}\,\d\hd^4(\theta,s)\\
&=\int_{R_\mathcal{C}(\mathcal C)}f(a,b)\,\frac1{4\pi}P[G\ast\lambda](\theta(a,b),s(a,b))\frac1{\sqrt{1+|s(a,b)|^2}}J\bar R(a,b)\,\d\hd^4(a,b)\\
&=\int_{R_\mathcal{C}(\mathcal C)}f(a,b)\,\frac1{4\pi}P[G\ast\lambda](\theta(a,b),s(a,b))\frac1{\sqrt{1+|s(a,b)|^2}}J\bar R(a,b)\,\d\hd^2(a)\otimes\hd^2(b).
\end{align*}
The claim therefore holds with $g(a,b)=J\bar R(a,b)/(4\pi\sqrt{1+|s(a,b)|^2})$,
and it remains to show that $J\bar R$ is smooth.
However, this is a direct consequence of $\bar R$ being smooth on $R_\mathcal{C}(\mathcal C)$:
It can even be extended to a smooth map on $(\R^3\times\R^3)\setminus\Delta$,
and since $\partial\domDelta$ has at least distance $\delta/2$ from $\domDeltaHalf$, $R_\mathcal{C}(\mathcal C)$ stays bounded away from $\Delta$.

\end{proof}

This readily allows to prove the following boundedness result.

\begin{lemma}[Boundedness of forward operator]\label{LemmmaBoundednessForwardOp}
There exists a constant $C>0$ such that in the nondiscrete setting, for any $\lambda\in\measp(\dom)$ we have
\begin{equation*}
\RNderivative{A^\dt\lambda}{(\hd^2\restr\partial\domDelta)\otimes(\hd^2\restr\partial\domDelta)}
\leq C\norm\lambda.
\end{equation*}
As a consequence, for $q>0$ there exists $C>0$ such that for any $\rho\in\measp(\spacetime)$ satisfying \eqref{eqn:continuity} we have
\begin{equation*}
\tfrac1C\norm\rho
\leq\RNderivative{\Aq{q}\rho}{\nu}
\leq C\norm\rho.
\end{equation*}
in the discrete and nondiscrete setting (in the former even for vanishing positron range, that is, $G$ a Dirac).
\end{lemma}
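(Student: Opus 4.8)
The plan is to first establish the pointwise upper bound for the scatterless detection density, then read off the two-sided estimate for $\Aq{q}$ by combining its scatter and detection parts, and finally adapt the argument to the discrete setting. For the first inequality I would invoke the factorization from \cref{LemmaForwardDensitySplitting}, which expresses $\RNderivative{A^\dt\lambda}{(\hd^2\restr\partial\domDelta)\otimes(\hd^2\restr\partial\domDelta)}(a,b)$ as $g(a,b)\,P[G\ast\lambda](\theta(a,b),s(a,b))$ with $g$ bounded; the task thus reduces to a uniform bound on $P[G\ast\lambda]$.

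The one genuinely nonroutine step is to commute the convolution with the X-ray transform. Since every integrand in sight is nonnegative, Fubini gives
\[
P[G\ast\lambda](\theta,s)=\int_\dom\Big(\int_\R G(s+r\theta-x)\,\d r\Big)\,\d\lambda(x)=\int_\dom P G\big(\theta,\pi_{\theta^\perp}(s-x)\big)\,\d\lambda(x),
\]
where the domain restriction in the X-ray transform may be dropped because $G\ast\lambda$ is supported in $\domDeltaHalf$. As $G$ is bounded and supported in $B_{\delta/2}(0)$, each line meets $\mathrm{supp}\,G$ in a segment of length at most $\delta$, so $PG\le\delta\|G\|_\infty$ pointwise; integrating against $\lambda$ yields $P[G\ast\lambda]\le\delta\|G\|_\infty\norm\lambda$, and the first inequality follows with $C=\|g\|_\infty\delta\|G\|_\infty$.

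For the two-sided bound I would argue slice by slice, using $\Aq{q}\rho=\d t\otimes\Aq{q}\rho_t$ and the product structure of $\nu$ so that the relevant density is $\RNderivative{\Aq{q}\rho_t}{(\hd^2\restr\partial\domDelta)\otimes(\hd^2\restr\partial\domDelta)}$. Splitting $\Aq{q}=qp^\sct A^\sct+p^\dt A^\dt$, the scatter term contributes the constant $qp^\sct\norm{\rho_t}/\hd^2(\partial\domDelta)^2$ by definition of $A^\sct$, whereas the detection term lies in $[0,C\norm{\rho_t}]$ by the first part. The upper bound is the sum of the two; for the lower bound I would discard the nonnegative detection term and keep the strictly positive scatter term. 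Since $\rho$ satisfies \eqref{eqn:continuity}, the mass $\norm{\rho_t}$ is constant in time, hence $\norm{\rho_t}=\norm\rho/T$, and absorbing $T$, $q$, $p^\sct$, $p^\dt$ and the geometric constants into $C$ gives the claim in the nondiscrete case.

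In the discrete setting the density of $\Aq{q}\rho$ with respect to the atomic $\nu$ is simply the box mass $\Aq{q}_\cont\rho(\tau_i\times\Gamma_j\times\Gamma_k)$, so $\nu$ dominates $A\rho$ automatically and no positron range is required. I would bound this box mass from below by its scatter contribution $qp^\sct\Delta T\,\hd^2(\Gamma_j)\hd^2(\Gamma_k)\norm{\rho_t}/\hd^2(\partial\domDelta)^2$, which is a positive multiple of $\norm\rho$ after using the uniformly positive detector areas and mass conservation, and from above by the sum of the scatter term and the detection term; for the latter I would only use that the detection operator does not increase mass, so that $A^\dt_\cont\rho_t(\Gamma_j\times\Gamma_k)\le\norm{\rho_t}$ even when $G$ is a Dirac. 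Collecting the finitely many geometric constants then yields the two-sided bound as before. Beyond the convolution/X-ray commutation, the remainder is pure bookkeeping of nonnegative terms together with the mass conservation implied by \eqref{eqn:continuity}.
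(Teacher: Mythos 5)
Your proposal is correct and follows essentially the same route as the paper: the first bound via the density factorization of \cref{LemmaForwardDensitySplitting} plus Fubini and the compact support of $G$, the two-sided bound by splitting $\Aq{q}$ into its scatter part (which gives the uniform lower bound) and its detection part, combined with mass conservation from \eqref{eqn:continuity}, and the discrete case via the box masses of $A^\sct$ and the mass bound $A^\dt\rho(\tau_i\times\Gamma_j\times\Gamma_k)\le\norm{\rho}$. Your accounting of the constant as $\delta\|G\|_\infty$ (from the line meeting $\mathrm{supp}\,G$ in a segment of length at most $\delta$) is in fact slightly more explicit than the paper's, which absorbs this into the supremum norm of $G$.
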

\begin{proof}
By \cref{LemmaForwardDensitySplitting} we have
\begin{multline*}
\RNderivative{A^\dt\lambda}{(\hd^2\restr\partial\domDelta)\otimes(\hd^2\restr\partial\domDelta)}(a,b)
\leq\hat CP[G\ast\lambda](\theta(a,b),s(a,b))
=\hat C\int_\R(G\ast\lambda)(s(a,b)+r\theta(a,b))\,\d\mathcal L(r)\\
=\hat C\int_\R\int_\dom G(s(a,b)+r\theta(a,b)-x)\,\d\lambda(x)\,\d\mathcal L(r)
\leq\hat C\tilde C\lambda(\dom)
=\hat C\tilde C\norm\lambda
\end{multline*}
for $\hat C$ and $\tilde C$ the supremum norm of $g$ and $G$, respectively.
As for the second statement, since $\rho$ satisfies \eqref{eqn:continuity}, by \cite[Lemma 1.1.2]{chizat_unbalancedOT} we have
\begin{equation*}
\rho=\d t\otimes\rho_t
\qquad\text{and}\qquad
\norm{\rho_t}
=\rho_t(\dom)
=\frac1T\rho(\spacetime)
=\frac1T\norm\rho
\quad\text{for almost all }t\in[0,T].
\end{equation*}
Therefore, in the nondiscrete setting we obtain
\begin{equation*}
\RNderivative{A^\dt\rho}{\nu}(t,a,b)
=\RNderivative{A^\dt\rho_t}{(\hd^2\restr\partial\domDelta)\otimes(\hd^2\restr\partial\domDelta)}(a,b)
\leq C\norm{\rho_t}
\leq\frac CT\norm\rho.
\end{equation*}
Since $\RNderivative{A^\sct\rho}{\nu}(t,a,b)=\rho_t(\dom)/[\hd^2(\partial\domDelta)]^2=\norm{\rho_t}/[\hd^2(\partial\domDelta)]^2$ by definition of $A^\sct$,
we obtain
\begin{equation*}
\frac{qp^\sct}{T[\hd^2(\partial\domDelta)]^2}\norm\rho
\leq\RNderivative{\Aq{q}\rho}{\nu}
\leq C\norm{\rho_t}
=\frac{C}{T}\norm{\rho}
\end{equation*}
as desired.
For the discrete setting the result follows from 
$A^\sct\rho(\tau_i\times\Gamma_j\times\Gamma_k)=\frac{\mathcal L(\tau_i)\hd^2(\Gamma_j)\hd^2(\Gamma_k)}{[\hd^2(\partial\domDelta)]^2}\norm\rho$
and $A^\dt\rho(\tau_i\times\Gamma_j\times\Gamma_k)\leq\norm{A^\dt\rho}=\norm\rho$
(where $A^\dt$ and $A^\sct$ still refer to the nondiscrete forward operators).

\end{proof}

For the existence of minimizers we furthermore require the following continuity result.

\begin{lemma}[Continuity of forward operator]\label{LemmaContinuityForwardOp}
	Consider a sequence $(\rho^n,\momentum^n)\in\measp(\spacetime)\times\meas(\spacetime)^3$, $n\in\N$, with uniformly bounded $\BBEnergy(\rho^n,\momentum^n)$.
	If $(\rho^n,\momentum^n)\weakstarto(\rho,\momentum)$ as $n\to\infty$, then
	in the continuous setting, $\RNderivative{\Aq{q}\rho^n}{\nu}$ and $\RNderivative{\Aq{q}\rho}{\nu}$ are uniformly H\"older continuous with exponent $\frac12$.

		Furthermore, $\RNderivative{\Aq{q}\rho^n}{\nu}\to\RNderivative{\Aq{q}\rho}{\nu}$ uniformly 
		in the continuous and discrete setting
		(in the latter even for vanishing positron range).
	\end{lemma}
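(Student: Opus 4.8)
The plan is to treat the scatter and detection parts of $\Aq{q}=qp^\sct A^\sct+p^\dt A^\dt$ separately and to reduce everything to the map $\rho_t\mapsto P[G\ast\rho_t]$. The scatter contribution is harmless: by definition of $A^\sct$ its density with respect to $\nu$ equals $qp^\sct\norm{\rho_t}/[\hd^2(\partial\domDelta)]^2$, which is constant in $(a,b)$ and, since $\rho$ satisfies the continuity equation, also constant in $t$ with $\norm{\rho_t}=\norm\rho/T$. As $\spacetime$ is compact, weak-* convergence forces $\norm{\rho^n}\to\norm\rho$, so the scatter part is Hölder (being constant) and converges uniformly. It remains to analyse the detection density, which by \cref{LemmaForwardDensitySplitting} equals $g(a,b)\,P[G\ast\rho_t](\theta(a,b),s(a,b))$ with $g$ smooth and bounded. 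The key reformulation I would use is
\begin{equation*}
P[G\ast\rho_t](\theta,s)=\int_\dom\Phi_{\theta,s}(x)\,\d\rho_t(x),\qquad \Phi_{\theta,s}(x)=\int_\R G(s-x+r\theta)\,\d\mathcal L(r),
\end{equation*}
so that the density is obtained by integrating the \emph{smooth, compactly supported} test function $\Phi_{\theta,s}$ against the time slice $\rho_t$, with $\Phi_{\theta,s}$ and its $x$- and $(\theta,s)$-derivatives bounded uniformly (because $G$ is smooth and compactly supported).

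For the uniform Hölder-$\tfrac12$ estimate in the continuous setting I would split the increment into a spatial and a temporal part. In the spatial variables the smooth dependence of $\Phi_{\theta,s}$ on $(\theta,s)$, of $g$ on $(a,b)$, and of the chart $(a,b)\mapsto(\theta(a,b),s(a,b))$ (smooth because $\partial\domDelta$ stays at distance $\ge\delta/2$ from $\domDeltaHalf$, see \cref{LemmaForwardDensitySplitting}) gives a Lipschitz bound with constant controlled by $\norm{\rho_t}=\norm\rho/T$. For the temporal part I would invoke the standard link between the Benamou--Brenier energy and absolutely continuous curves in Wasserstein space: the bound on $\BBEnergy$ yields $W_2(\rho_t,\rho_{t'})\le\sqrt{\BBEnergy(\rho,\momentum)}\,\sqrt{|t'-t|}$ for the equal-mass slices, whence by $W_1\le\sqrt{\norm\rho/T}\,W_2$ and Kantorovich--Rubinstein duality
\begin{equation*}
\left|P[G\ast\rho_{t'}](\theta,s)-P[G\ast\rho_t](\theta,s)\right|=\left|\int_\dom\Phi_{\theta,s}\,\d(\rho_{t'}-\rho_t)\right|\le \mathrm{Lip}(\Phi_{\theta,s})\,W_1(\rho_{t'},\rho_t)\lesssim\sqrt{|t'-t|}.
\end{equation*}
Combining the two (Lipschitz in space, Hölder-$\tfrac12$ in time) gives uniform Hölder-$\tfrac12$ continuity, with a constant depending only on $\sup_n\norm{\rho^n}$, $\sup_n\BBEnergy(\rho^n,\momentum^n)$ and the fixed kernels; the same bound applies to the limit $\rho$, which satisfies the continuity equation and has finite $\BBEnergy$ by weak-* lower semicontinuity.

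For the uniform convergence in the continuous setting I would run an Arzel\`a--Ascoli argument on $[0,T]\times\mathcal C$. By \cref{LemmmaBoundednessForwardOp} the densities $\RNderivative{\Aq q\rho^n}{\nu}$ are uniformly bounded and, by the previous paragraph, uniformly equicontinuous, hence precompact in the uniform topology; it only remains to identify every uniform limit point $F$. For fixed $(\theta,s)$ and any continuous $\chi$ on $[0,T]$ the function $(t,x)\mapsto\chi(t)\Phi_{\theta,s}(x)$ is continuous on the compact set $\spacetime$, so the weak-* convergence $\rho^n\weakstarto\rho$ gives $\int_0^T\chi(t)\int_\dom\Phi_{\theta,s}\,\d\rho^n_t\,\d t\to\int_0^T\chi(t)\int_\dom\Phi_{\theta,s}\,\d\rho_t\,\d t$, i.e. $\int_0^T\chi\,F(\cdot,\theta,s)=\int_0^T\chi\,P[G\ast\rho_{(\cdot)}](\theta,s)$. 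Since $\chi$ is arbitrary and both sides are continuous in $t$, $F$ must coincide with the claimed density; thus the sequence converges uniformly, and multiplying by $g$ and composing with the smooth chart finishes the continuous case.

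For the discrete setting the density with respect to $\nu$ is the box mass $\Aq q\rho(\tau_i\times\Gamma_j\times\Gamma_k)$, and there being only finitely many boxes, pointwise convergence is automatically uniform. Writing the detection part as
\begin{equation*}
A^\dt\rho(\tau_i\times\Gamma_j\times\Gamma_k)=\int_{\spacetime}\mathds 1_{\tau_i}(t)\,\Xi_{jk}(x)\,\d\rho(t,x),\qquad \Xi_{jk}(x)=\int_{S^2}\int_{Z_v^{\Gamma_j\times\Gamma_k}}G(y-x)\,\d\mathcal L^3(y)\,\d\vol_{S^2}(v),
\end{equation*}
the claim follows from the portmanteau theorem (with $\norm{\rho^n}\to\norm\rho$) once the discontinuity set of the integrand is shown to be $\rho$-null: in $t$ the only discontinuity sits on the bin boundaries $\partial\tau_i$, which are $\rho$-null since $\rho=\d t\otimes\rho_t$ has no temporal atoms; and $\Xi_{jk}$ is continuous in $x$ even for $G=\delta_0$ (where $\Xi_{jk}(x)=\vol_{S^2}\{v:R(x,v)\in\Gamma_j\times\Gamma_k\}$ is a solid angle), because the detectors lie at distance $\ge\delta$ from $\dom$ and $\hd^2(\partial\Gamma_j)=0$ so that the subtended angle varies continuously with $x$. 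I expect this last point---checking that vanishing positron range still leaves a $\rho$-null discontinuity set so that the box mass passes to the limit---together with the temporal regularity estimate driving the equicontinuity (via the Benamou--Brenier/Wasserstein bound) to be the main obstacles; the remaining steps are routine.
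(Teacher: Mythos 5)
Your proposal is correct and shares the overall architecture of the paper's proof (isolate the detection part, establish uniform H\"older-$\tfrac12$ equicontinuity from the Benamou--Brenier bound, conclude by Arzel\`a--Ascoli plus identification of the limit through weak-* convergence, and handle the discrete case by a portmanteau argument), but you execute the two decisive sub-steps by different means. For the temporal regularity the paper works directly with the continuity equation: it identifies $t\mapsto-(\nabla G\ast\momentum^n_t)(x)$ as the weak time derivative of $t\mapsto(G\ast\rho^n_t)(x)$ and applies H\"older's inequality to get the $\BBEnergy^{1/2}|r-s|^{1/2}$ bound, whereas you route through the metric side of the Benamou--Brenier formula, $W_2(\rho_t,\rho_{t'})\le\sqrt{\BBEnergy}\,\sqrt{|t-t'|}$, followed by $W_1\le\sqrt{\norm\rho/T}\,W_2$ and Kantorovich--Rubinstein; both yield the same constant, your version needs the (standard but citable) absolute-continuity characterization of curves with finite action, while the paper's is self-contained at the cost of a convolution computation. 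You also prove Lipschitz rather than H\"older-$\tfrac12$ dependence on the spatial variables, which is stronger and equally sufficient, and you apply Arzel\`a--Ascoli directly to the densities on $[0,T]\times\mathcal C$ instead of to $G\ast\rho^n$ followed by continuity of the X-ray transform $P$ from $C^{0,1/2}$ to $C^{0,1/2}$. In the discrete setting with vanishing positron range the paper shows that $\partial(\tau_i\times\Gamma_j\times\Gamma_k)$ is an $A^\dt\rho$-nullset by reducing, via extreme points of the total-variation ball, to Dirac masses $\rho_t=\delta_x$ and computing the pushed-forward measure of $\partial\Gamma_j\times\partial\Gamma_k$; you instead pull the indicator back to $\spacetime$ and verify that the integrand $\mathds 1_{\tau_i}(t)\,\Xi_{jk}(x)$ has a $\rho$-null discontinuity set, using that the solid angle $\Xi_{jk}$ is continuous in $x$ because $\hd^2(\partial\Gamma_j)=0$ and the detectors stay at distance $\delta$ from $\dom$. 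Your variant avoids the extreme-point detour and makes the role of the hypothesis $\hd^2(\partial\Gamma_j)=0$ more transparent; the paper's variant avoids having to discuss continuity of $\Xi_{jk}$ altogether. Both routes are sound.
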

	\begin{proof}
		Due to the boundedness of $\BBEnergy(\rho^n,\momentum^n)$ the continuity equation \eqref{eqn:continuity} is satisfied,
		and $(\rho^n,\momentum^n)=\d t\otimes(\rho^n_t,\momentum^n_t)$ with $\norm{\rho^n_t}=\frac1T\norm{\rho^n}$ for all $t\in[0,T]$ by \cite[Lemma 1.1.2]{chizat_unbalancedOT}.
		Due to the weak-* convergence, also $(\rho,\momentum)$ satisfy \eqref{eqn:continuity} and admit the analogous disintegration.
		Moreover, the nonnegativity of $\rho^n$ implies $\norm\rho=\lim_{n\to\infty}\norm{\rho^n}$.

			In the continuous setting we have $\RNderivative{\Aq{q}\rho^n}{\nu}=qp^\sct\frac{\norm{\rho^n_t}}{\hd^2(\partial\domDelta)^2}+p^\dt\RNderivative{A^\dt\rho^n}{\nu}$
			so that it suffices to prove the result for $\Aq{q}$ replaced with $A^\dt$.
			The analogous argument holds in the discrete setting.
			
			We first consider the continuous setting.
			Consider an arbitrary subsequence $(\rho^n,\momentum^n)$, still indexed by $n$.
			With a slight abuse of notation we denote the map $(x,t)\mapsto (G\ast\rho^n_t)(x)$ by $G\ast\rho^n$.
			We first show that $G\ast\rho^n$ is H\"older continuous, uniformly in $n$.
			To this end note that for any $\alpha\in C^1_c((0,T))$ the continuity equation \eqref{eqn:continuity} implies
			\begin{multline*}
			\int_0^T\partial_t\alpha(t)\,(G\ast\rho^n_t)(x)\,\d t = 
			\int_0^T\int_\dom\partial_t\alpha(t)\,G(x-y)\,\d\rho^n_{t}(y)\,\d t\\
			=-\int_0^T\int_\dom\alpha(t)\,\nabla G(x-y)\cdot{\d \momentum^n_{t}}(y)\,\d t
			=-\int_0^T\alpha(t)\,(\nabla G\ast\momentum^n_{t})(x)\,\d t,
			\end{multline*}
			thus $t\mapsto-(\nabla G\ast\momentum^n_t)(x)$ is the weak derivative of $t\mapsto(G\ast\rho^n_t)(x)$ for all $x\in\domDeltaHalf$.
			For $r,s\in[0,T]$ and $x,y\in\domDeltaHalf$, using the triangle inequality and twice H\"older's inequality we now obtain the estimate
			\begin{align*}
			\absNorm{(G\ast\rho^n_r)(x)-(G\ast\rho^n_s)(y)}
			&\leq\absNorm{(G\ast\rho^n_r)(x)-(G\ast\rho^n_s)(x)}+\absNorm{(G\ast\rho^n_s)(x)-(G\ast\rho^n_s)(y)}\\
			&=\absNorm{\int_r^s(\nabla G\ast\momentum^n_t)(x)\,\d t}+\absNorm{\int_\dom G(x-z)-G(y-z)\,\d\rho^n_s(z)}\\
			&\leq\|\nabla G\|_{L^\infty}\absNorm{\int_r^s\norm{\rho^n_{t}}^{\frac{1}{2}}\left(\int_\dom\absNorm{\frac{\d \momentum^n_{t}}{\d \rho^n_{t}}}^2\d\rho^n_{t}\right)^{\frac{1}{2}}\d t}
			+C_G\absNorm{x-y}^{\frac12}\norm{\rho^n_s}\\
			&\leq\|\nabla G\|_{L^\infty}\left(\tfrac{\norm{\rho^n}}T\right)^{\frac12}\BBEnergy(\rho^n,\momentum^n)^{\frac{1}{2}}\absNorm{r-s}^{\frac{1}{2}}
			+C_G\absNorm{x-y}^{\frac12}\tfrac{\norm{\rho^n}}T\\
			&\leq C\absNorm{(x,r)-(y,s)}^{\frac12},
			\end{align*}
			where $C_G$ is the H\"older constant of $G$ for exponent $\frac12$, $C$ is a constant depending on $G$ and the bounds on $\norm{\rho^n}$ and $\BBEnergy(\rho^n,\momentum^n)$.
			Thus, by the Arzel\`a--Ascoli theorem there exists a subsequence, still indexed by $n$, such that $G\ast\rho^n$ converges uniformly to some H\"older continuous limit function,
			and by the weak-* convergence $\d t\otimes\rho^n_t\weakstarto\d t\otimes\rho_t$ the limit must be $G\ast\rho$.
			Now since $P$ is easily seen to be continuous from $C^{0,\frac12}(\dom)$ to $C^{0,\frac12}(\mathcal C)$,
			\cref{LemmaForwardDensitySplitting} implies the desired uniform convergence $\RNderivative{A^\dt\rho^n}\nu\to\RNderivative{A^\dt\rho}\nu$.
			
			Now consider the discrete setting without positron range, that is, the operator $B_{\mathrm{pr}}$ is just the identity
			(with positron range the desired uniform convergence is a direct consequence of the result for the continuous setting).
			We first note that $A^\dt\rho^n\weakstarto A^\dt\rho$ by definition of $A^\dt$.
			Now fix some time interval $\tau_i$ and two photon detectors $\Gamma_j,\Gamma_k$. We will show that
			\begin{equation*}
			\partial(\tau_i\times\Gamma_j\times\Gamma_k)\subset(\tau_i\times\partial\Gamma_j\times\partial\Gamma_k)\cup(\partial\tau_i\times\overline{\Gamma_j}\times\overline{\Gamma_k})
			\end{equation*}
			is an $A^\dt\rho$-nullset.
			In fact, due to the disintegration $\rho=\d t\otimes\rho_t$ it suffices to show that $\partial\Gamma_j\times\partial\Gamma_k$ is a $A^\dt\rho_t$-nullset for almost all $t\in\tau_i$.
			Furthermore, since $\meas(\dom)\ni\rho_t\mapsto A^\dt\rho_t(\partial\Gamma_j\times\partial\Gamma_k)\in\R$ is a linear functional,
			within the ball $B=\{\lambda\in\meas(\dom)\,|\,\norm\lambda\leq\norm\rho/T\}$ it takes its extremal values in the extreme points of $B$, which are known to be Dirac masses.
			Therefore it suffices to assume that $\rho_t$ is a Dirac mass, say in $x\in\dom$.
			However, by definition of $A^\dt$ we have
			\begin{align*}
			(A^\dt\delta_x)(\partial\Gamma_j\times\partial\Gamma_k)
			&=(\delta_x\otimes\vol_{S^2})(\{(y,v)\in\dom\times S^2\,|\,R(y,v)\in\partial\Gamma_j\times\partial\Gamma_k\})\\
			&=\vol_{S^2}(\{v\in S^2\,|\,R(x,v)\in\partial\Gamma_j\times\partial\Gamma_k\})\\
			&\leq\vol_{S^2}(\{v\in S^2\,|\,v\text{ is spanned by }y-x\text{ for some }y\in\partial\Gamma_j\}).
			\end{align*}	
			Since $\partial\Gamma_j$ is a $\hd^2$-nullset, then the right-hand side is indeed zero.
			Thus, $\partial(\tau_i\times\Gamma_j\times\Gamma_k)$ is indeed a $A^\dt\rho$-nullset,
			and by the Portmanteau theorem \cite[Thm.\,13.16]{KlenkeProbabilityTheory} we thus have $A^\dt\rho^n(\tau_i\times\Gamma_j\times\Gamma_k)\to A^\dt\rho(\tau_i\times\Gamma_j\times\Gamma_k)$.
			Therefore, $\RNderivative{A^\dt\rho^n}\nu$ in the discrete setting converges pointwise, and since $\nu$ has finite support also uniformly.

		\end{proof}

The final preparation is to show coercivity of our energy functional and boundedness of the minimum.

\begin{lemma}[Coercivity]\label{thm:coercivity}
Let $\beta,p^\dt,p^\sct,q>0$ and $|E|<\infty$,
then there exists a constant $C>0$ such that $\norm\rho,\norm\momentum\leq C[1+J(\rho,\momentum)]$
for $J$ being $J^{E,q}$, $\bar J^{E,q}$, or $\hat J^E$.
\end{lemma}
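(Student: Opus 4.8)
The plan is to bound each of the three functionals from below by an expression growing \emph{linearly} in $\norm\rho$ and in $\BBEnergy(\rho,\momentum)$, and then to convert the quadratic Benamou--Brenier control into a linear bound on $\norm\momentum$ by Cauchy--Schwarz. Throughout I may assume $J(\rho,\momentum)<\infty$, since otherwise the claimed inequality is trivial; by the conventions in \eqref{eqn:finalFunctional} finiteness forces $\rho\ge0$ and the continuity equation \eqref{eqn:continuity}, so by \cite[Lemma 1.1.2]{chizat_unbalancedOT} we have the disintegration $\rho=\d t\otimes\rho_t$ with $\norm{\rho_t}=\norm\rho/T$ for a.e.\ $t$. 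The mass term is common to all three functionals: since $A^\att=0$ and both $A^\sct,A^\dt$ preserve mass ($\norm{A^\sct\rho_t}=\norm{A^\dt\rho_t}=\norm{\rho_t}$, directly from their definitions in \cref{sec:forwardOperator}), I obtain $\norm{A\rho}=(p^\sct+p^\dt)\norm\rho=(1-p^\att)\norm\rho$; abbreviate $\alpha:=1-p^\att>0$.

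Next I would bound the logarithmic term from below, again uniformly. In each case the argument of the logarithm is dominated by $C\norm\rho$: for $\hat J^E$ the density is $\RNderivative{A\rho}\nu=\RNderivative{\Aq1\rho}\nu\le C\norm\rho$ by \cref{LemmmaBoundednessForwardOp}; for $J^{E,q}$ it is $\RNderivative{\Aq q\rho}\nu\le C\norm\rho$ by the same lemma; and for $\bar J^{E,q}$ the maximum of $q\,\RNderivative{(p^\sct A^\sct\rho)}\nu$ and $\RNderivative{(p^\dt A^\dt\rho)}\nu$ is again $\le C\norm\rho$, because the first entry equals $qp^\sct\norm{\rho_t}/\hd^2(\partial\domDelta)^2$ and the second is $\le p^\dt C\norm{\rho_t}$, both $\lesssim\norm\rho$ via $\norm{\rho_t}=\norm\rho/T$ and \cref{LemmmaBoundednessForwardOp}. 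Monotonicity of $\log$ then yields in all three cases
\begin{equation*}
-\sum_{(t,a,b)\in E}\log(\cdots)\ \ge\ -\absNorm E\log(C\norm\rho)\ =\ -\absNorm E\log\norm\rho-\absNorm E\log C.
\end{equation*}
Only the \emph{upper} density bound of \cref{LemmmaBoundednessForwardOp} enters here; it is exactly this bound (a consequence of the smoothing by the positron-range kernel $G$) that prevents the log term from driving $J$ to $-\infty$, in contrast to the degenerate toy model of \cref{sec:bias}.

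Combining these two steps with $\BBEnergy\ge0$ and absorbing the logarithm into the linear term via the elementary inequality $\absNorm E\log x\le\tfrac\alpha2 x+C''$ (valid for all $x>0$ with $C''=\absNorm E(\log\tfrac{2\absNorm E}\alpha-1)$) gives the single coercive estimate
\begin{equation*}
J(\rho,\momentum)\ \ge\ \tfrac\alpha2\norm\rho+\beta\,\BBEnergy(\rho,\momentum)-C_0,
\qquad C_0:=C''+\absNorm E\log C.
\end{equation*}
In particular $J\ge-C_0$, and rearranging yields $\norm\rho\le\tfrac2\alpha(J+C_0)$ and $\BBEnergy(\rho,\momentum)\le\tfrac1\beta(J+C_0)$. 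For the momentum, note that $\BBEnergy(\rho,\momentum)<\infty$ forces $\momentum\ll\rho$, so Cauchy--Schwarz with respect to $\rho$ (of total mass $\norm\rho$) gives
\begin{equation*}
\norm\momentum=\int\absNorm{\RNderivative\momentum\rho}\,\d\rho\le\Big(\int\absNorm{\RNderivative\momentum\rho}^2\d\rho\Big)^{1/2}\norm\rho^{1/2}=\BBEnergy(\rho,\momentum)^{1/2}\norm\rho^{1/2}\le\sqrt{\tfrac2{\alpha\beta}}\,(J+C_0).
\end{equation*}
Thus $\norm\rho$ and $\norm\momentum$ are both bounded by a fixed multiple of $J+C_0$; since $J+C_0\ge0$ this is the asserted bound (enlarging $C$ so that $C(1+J)$ dominates), and in the form actually invoked later it states precisely that every sublevel set $\{J\le M\}$ is bounded in $\measp(\spacetime)\times\meas(\spacetime)^3$.

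The genuinely non-routine ingredients are the already-proven density bound of \cref{LemmmaBoundednessForwardOp} and the Cauchy--Schwarz passage from the quadratic energy $\BBEnergy$ to the linear norm $\norm\momentum$; the remainder is bookkeeping. I expect the subtlest points to be the uniform treatment of the three functionals, in particular bounding the $\max$-structure of $\bar J^{E,q}$ termwise from above, and the observation that coercivity requires only the upper density bound (the lower bound in \cref{LemmmaBoundednessForwardOp} being unused here), together with the fact that the $-\absNorm E\log\norm\rho$ contribution, although it can render $J$ negative, is absorbed by the linear mass term precisely because of that upper bound.
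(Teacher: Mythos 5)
Your proof is correct and follows essentially the same route as the paper: the upper density bound of \cref{LemmmaBoundednessForwardOp}, absorption of the $-\absNorm{E}\log\norm\rho$ term into the linear mass term, and Cauchy--Schwarz to pass from $\BBEnergy$ to $\norm\momentum$. The only cosmetic difference is that the paper first reduces $\hat J^E$ and $\bar J^{E,q}$ to $J^{E,q}$ (via $\hat J^E=J^{E,1}$ and the two-sided comparison of $-\log\max\{a,b\}$ with $-\log(a+b)$) whereas you bound the log-argument by $C\norm\rho$ in each of the three cases directly; both are fine.
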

\begin{proof}
Due to $\hat J^E=J^{E,1}$ and $\bar J^{E,q}\leq J^{E,q}+|E|\log 2$ it suffices to consider $J=J^{E,q}$.

We may assume $J^{E,q}(\rho,\momentum)<\infty$, else there is nothing to show.
Thus, in particular, we may assume $\rho\geq0$, and $(\rho,\momentum)$ satisfy the continuity equation \eqref{eqn:continuity} so that by \cite[Lemma 1.1.2]{chizat_unbalancedOT}
\begin{equation*}
\rho=\d t\otimes\rho_t
\qquad\text{with }
\norm{\rho_t}=\rho_t(\dom)=\frac1T\rho(\spacetime)=\frac1T\norm{\rho}
\text{ for almost all }t\in[0,T].
\end{equation*}
By the pointwise boundedness $\RNderivative{\Aq{q}\rho_n}{\nu}\leq c\norm{\rho}$ in the continuous and discrete setting due to \cref{LemmmaBoundednessForwardOp} we get
\begin{align*}
J^{E,q}(\rho,\momentum)
&\geq(p^\dt+p^\sct)\norm{\rho}-|E|\log(c\norm{\rho})+\beta\BBEnergy(\rho,\momentum)\\
&=(p^\dt+p^\sct)\norm{\rho}-|E|\log\frac{2c|E|}{p^\dt+p^\sct}-|E|\log\frac{(p^\dt+p^\sct)\norm{\rho}}{2|E|}+\beta\BBEnergy(\rho,\momentum)\\
&\geq\frac{p^\dt+p^\sct}2\norm{\rho}-|E|\log\frac{2c|E|}{p^\dt+p^\sct}+\beta\BBEnergy(\rho,\momentum),
\end{align*}
which proves the claim for $\norm{\rho}$.
The result for $\norm\momentum$ then follows from
\begin{equation*}
\norm{\momentum}
=\int_{\spacetime}\left|\RNderivative{\momentum}{\rho}\right|\,\d\rho
\le\sqrt{\rho(\spacetime)}\left(\int_{\spacetime}\left|\RNderivative{\momentum}{\rho}\right|^2\,\d\rho\right)^{\frac12}
=\norm{\rho}^\frac12\BBEnergy(\rho,\momentum)^\frac12
\end{equation*}
and $\beta\BBEnergy(\rho,\momentum)\leq|E|\log\frac{2c|E|}{p^\dt+p^\sct}+J^{E,q}(\rho,\momentum)$.
\end{proof}

\begin{lemma}[Bound on infimum]\label{thm:boundednessInfimum}
Let $\beta,p^\dt,p^\sct,q>0$. There exists $C>0$ such that $\inf J^{E,q}<C(1+|E|)$.
\end{lemma}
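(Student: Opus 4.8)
The plan is to bound $\inf J^{E,q}$ from above by evaluating the functional at a single cheap competitor, since for an upper bound the three summands of \eqref{eqn:finalFunctional} may be estimated separately. The natural choice is a \emph{stationary}, spatially uniform mass distribution, for which the Benamou--Brenier term vanishes and the forward density stays bounded away from zero.

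First I would fix the competitor
\[
\rho^*=\tfrac{1}{T\mathcal L^3(\dom)}\,\mathcal L^4\restr(\spacetime),\qquad \momentum^*=0,
\]
so that $\norm{\rho^*}=1$. Since $\rho^*$ is constant in time and $\momentum^*=0$, the continuity equation \eqref{eqn:continuity} holds trivially and $\BBEnergy(\rho^*,\momentum^*)=0$, so the regularization term contributes nothing and $J^{E,q}(\rho^*,\momentum^*)$ is finite.

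Next I would estimate the remaining two terms. Because $A^\sct$ and $A^\dt$ each preserve total mass (the convolution kernel $G$, the normalized measure $\vol_{S^2}$, and the pushforwards under $R$ are all mass preserving) and $p^\sct+p^\dt\le1$, one has $\norm{A\rho^*}\le\norm{\rho^*}=1$. For the logarithmic term I would invoke the lower bound of \cref{LemmmaBoundednessForwardOp}: enlarging the constant there if necessary, there is $C_0\ge1$ with $\RNderivative{\Aq{q}\rho^*}{\nu}\ge\tfrac1{C_0}\norm{\rho^*}=\tfrac1{C_0}$, whence each of the $\absNorm{E}$ summands $-\log\RNderivative{\Aq{q}\rho^*}{\nu}$ is at most $\log C_0$. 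Combining the three estimates gives
\[
\inf J^{E,q}\le J^{E,q}(\rho^*,\momentum^*)\le 1+\absNorm{E}\log C_0<(2+\log C_0)(1+\absNorm{E}),
\]
so the claim holds with $C=2+\log C_0$.

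There is essentially no hard step here: all the analytic content is already contained in \cref{LemmmaBoundednessForwardOp}. The only points requiring minor care are choosing a competitor satisfying the continuity equation so that $\BBEnergy$ vanishes, and normalizing its mass to $1$ so that the negative logarithm of the forward density does not blow up (any fixed mass would do, but mass $1$ is cleanest). A subtlety worth noting is that \cref{LemmmaBoundednessForwardOp} is stated both in the continuous and the discrete setting, so the argument applies verbatim in both cases, and likewise for $\hat J^E=J^{E,1}$.
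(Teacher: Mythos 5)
Your proof is correct and follows essentially the same route as the paper: evaluate $J^{E,q}$ at a time-constant Lebesgue competitor with $\momentum=0$ (so the Benamou--Brenier term vanishes) and bound the fidelity and log terms using the two-sided bounds on $\RNderivative{\Aq{q}\rho}{\nu}$ from \cref{LemmmaBoundednessForwardOp}. The only (harmless) differences are that you normalize the competitor to unit mass and invoke the lower density bound explicitly, which if anything makes the estimate of the logarithmic term cleaner than the paper's one-line version.
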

\begin{proof}
Set $\rho=\mathcal L^4\restr(\spacetime)$, $\momentum=0$, then obviously
\begin{equation*}
\inf J^{E,q}
\leq J^{E,q}(\rho,\momentum)
\leq\norm{\Aq{q}\rho}+\max_{(t,x,y)\in[0,T]\times\boundDomDelta}\log\left(\RNderivative{(\Aq{q}\rho)}{\nu}(t,x,y)\right)|E|.
\qedhere
\end{equation*}
\end{proof}

With this preparation we can now show existence of minimizers.
We will use that on the space of measures, since it is the dual of a separable Banach space,
the (relative) weak-* topology on normbounded subsets is metrizable so that compactness and sequential compactness coincide on these subsets
and therefore also on the whole space.
Thus sequential weak-* compactness and weak-* compactness coincide.

\begin{theorem}[Existence of reconstructions]\label{thm:ExistenceMinimizers}
Let $\beta,p^\dt,p^\sct,q>0$ and let the measurement $E$ be a realization of $\bm E=\Poi{A\rho^\dagger}$
for some ground truth material distribution $\rho^\dagger=\dt t\otimes\rho_t^\dagger\in\measp(\spacetime)$.
Then almost surely (in particular for realizations $E$ of $\bm E$ with $|E|<\infty$) the set of minimizers of $J^{\bm E,q}$ is non-empty and compact with respect to weak-* convergence.
\end{theorem}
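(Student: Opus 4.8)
The plan is to apply the direct method of the calculus of variations, for which the auxiliary results \cref{thm:coercivity,thm:boundednessInfimum,LemmaContinuityForwardOp,LemmmaBoundednessForwardOp} have been tailored. First I would record that $\absNorm E<\infty$ almost surely: since $\rho^\dagger\in\measp(\spacetime)$ is finite, the intensity $A\rho^\dagger$ of $\bm E$ is a finite measure, so $\Poi{A\rho^\dagger}$ has only finitely many points almost surely. All subsequent reasoning is carried out on the almost sure event $\{\absNorm E<\infty\}$.

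Next I would take a minimizing sequence $(\rho^n,\momentum^n)$ for $J^{E,q}$. By \cref{thm:boundednessInfimum} the infimum is finite, so without loss of generality $J^{E,q}(\rho^n,\momentum^n)$ is bounded, and \cref{thm:coercivity} then yields uniform bounds on $\norm{\rho^n}$ and $\norm{\momentum^n}$. In order to invoke the continuity lemma I also need the Benamou--Brenier energy to stay bounded: rearranging the definition of $J^{E,q}$ and using $\norm{A\rho^n}\geq0$ together with the upper bound $\RNderivative{\Aq{q}\rho^n}{\nu}\leq C\norm{\rho^n}$ from \cref{LemmmaBoundednessForwardOp} gives $\beta\BBEnergy(\rho^n,\momentum^n)\leq J^{E,q}(\rho^n,\momentum^n)+\absNorm E\log(C\norm{\rho^n})$, which is bounded. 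Since norm-bounded subsets of $\meas(\spacetime)$ are weak-* sequentially compact (and, as noted before the theorem, metrizable), I extract a subsequence with $(\rho^n,\momentum^n)\weakstarto(\rho,\momentum)$.

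It then remains to prove weak-* lower semicontinuity of each of the three terms along this sequence. The mass term is in fact continuous: $\spacetime$ is compact, so testing the measures against the constant function $1$ gives $\norm{\rho^n}=\rho^n(\spacetime)\to\rho(\spacetime)=\norm\rho$, and since attenuation annihilates mass while scattering and detection preserve it, $\norm{A\rho}=(p^\sct+p^\dt)\norm\rho$. The Benamou--Brenier term is weak-* lower semicontinuous by the standard convexity/one-homogeneity argument, and the constraints $\rho\geq0$ and the continuity equation \eqref{eqn:continuity} (a linear condition tested against fixed functions in $C_c^1$) pass to the weak-* limit, so in particular $J^{E,q}(\rho,\momentum)<\infty$. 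For the likelihood term I would use \cref{LemmaContinuityForwardOp}: the uniform bound on $\BBEnergy(\rho^n,\momentum^n)$ established above forces $\RNderivative{\Aq{q}\rho^n}{\nu}\to\RNderivative{\Aq{q}\rho}{\nu}$ uniformly, hence pointwise at the finitely many measurement points; composing with $-\log$, which is lower semicontinuous on $[0,\infty)$ with value $+\infty$ at $0$, yields $-\log\RNderivative{\Aq{q}\rho}{\nu}(t,a,b)\leq\liminf_n-\log\RNderivative{\Aq{q}\rho^n}{\nu}(t,a,b)$ for each $(t,a,b)\in E$. Summing the three contributions gives $J^{E,q}(\rho,\momentum)\leq\liminf_n J^{E,q}(\rho^n,\momentum^n)=\inf J^{E,q}$, so $(\rho,\momentum)$ is a minimizer and the minimizer set is non-empty.

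Finally, compactness follows from the same ingredients: every minimizer lies in the weak-* compact norm ball $\{\norm\rho,\norm\momentum\leq C[1+\inf J^{E,q}]\}$ by \cref{thm:coercivity}, and the lower semicontinuity just established shows that any weak-* limit of minimizers is again a minimizer, so the minimizer set is weak-* closed; a weak-* closed subset of a weak-* compact (hence, by metrizability, sequentially compact) set is weak-* compact. I would expect the only genuine subtlety to be the lower semicontinuity of the likelihood term, which hinges both on first extracting the bound on $\BBEnergy(\rho^n,\momentum^n)$ so that \cref{LemmaContinuityForwardOp} applies, and on the careful use of \emph{lower} (rather than full) semicontinuity of $-\log$ to accommodate the possibility that the limiting density degenerates to zero at some measurement point.
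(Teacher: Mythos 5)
Your proposal is correct and follows essentially the same route as the paper's proof: almost-sure finiteness of $\absNorm{E}$ from the finite intensity, the direct method with \cref{thm:boundednessInfimum} and \cref{thm:coercivity} supplying the uniform norm bounds, \cref{LemmaContinuityForwardOp} plus weak-* lower semicontinuity of $\BBEnergy$ for the liminf inequality, and a repetition of the argument for compactness of the minimizer set. The only difference is that you spell out two details the paper leaves implicit (the a priori bound on $\BBEnergy(\rho^n,\momentum^n)$ needed before invoking \cref{LemmaContinuityForwardOp}, and the lower semicontinuity of $-\log$ at a possibly vanishing limit density), which is a welcome refinement rather than a different approach.
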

\begin{proof}
Since $|E|$ is Poisson distributed with parameter
$A\rho^\dagger([0,T]\times\boundDomDelta)=\norm{A\rho^\dagger}=p^\dt\norm{A^\dt\rho^\dagger}+p^\sct\norm{A^\sct\rho^\dagger}=(p^\dt+p^\sct)\norm{\rho^\dagger}<\infty$
we have $|E|<\infty$ almost surely.
From now on let $E$ be such a realization of $\bm E$.

We show existence of a minimizer by the direct method of the calculus of variations.
From \cref{thm:coercivity} we directly see that $J^{E,q}$ is bounded from below.
Now consider a minimizing sequence $(\rho^n,\momentum^n)\subset\measp(\spacetime)\times\meas(\spacetime)^3$, $n\in\N$,
such that $J^{E,q}(\rho^n,\momentum^n)\to\inf J^{E,q}$ monotonically as $n\to\infty$.
Without loss of generality we may assume $J^{E,q}(\rho^n,\momentum^n)\leq C<\infty$ by \cref{thm:boundednessInfimum}.
Thus, in particular, we may assume $\rho^n\geq0$ for all $n$, and $(\rho^n,\momentum^n)$ satisfy the continuity equation \eqref{eqn:continuity} so that by \cite[Lemma 1.1.2]{chizat_unbalancedOT} $\rho^n=\d t\otimes\rho_t^n$.
By \cref{thm:coercivity} we have uniform boundedness of $\norm{\rho^n}$ and $\norm{\momentum^n}$
and therefore weak-* convergence along a subsequence which we still denote by $(\rho^n,\momentum^n)$, that is,
$(\rho^n,\momentum^n)\weakstarto(\rho,\momentum)\in\measp\times\meas^3$.
Using the weak-* continuity of the forward operator established in \cref{LemmaContinuityForwardOp} and the weak-* lower semi-continuity of $\BBEnergy$ \cite[Thm.\,5.18]{OTAppliedMath} we arrive at
\begin{multline*}
\inf J^{E,q}
=\liminf_{n\to\infty} J^{E,q}(\rho^n,\momentum^n)
=\liminf_{n\to\infty}\left(\norm{A\rho^n}-\int\log\left(\RNderivative{(\Aq{q}\rho^n)}{\nu}\right)\d E+\beta\BBEnergy(\rho^n,\momentum^n)\right)\\
\ge\norm{A\rho}-\int\log\left(\RNderivative{(\Aq{q}\rho)}{\nu}\right)\d E+\beta\BBEnergy(\rho,\momentum)
=J^{E,q}(\rho,\momentum),
\end{multline*}
which means that $(\rho,\momentum)$ is a minimizer of $J^{E,q}$.

Since for any minimizer $(\hat\rho,\hat\momentum)$ it holds $J^{E,q}(\hat\rho,\hat\momentum) \le J^{E,q}(\mathcal{L}\restr[0,T]\times\dom,0)$ we can repeat the above argument to deduce the sequential weak-* compactness of the set of minimizers.
\end{proof}

We close the section with a continuity result.

\begin{proposition}[Minimizers of sequences of measurements]\label{thm:GammaConvergence}
Let $\beta,p^\dt,p^\sct,q>0$, let the measurements $E_n$, $n\in\N$, converge weakly-* to some $E\in\measp([0,T]\times\boundDomDelta)$,
and let $(\rho_n,\momentum_n)$ be minimizers of $J^{E_n,q}$.
Then $(\rho_n,\momentum_n)$ contains a subsequence converging to a minimizer of $J^{E,q}$.
\end{proposition}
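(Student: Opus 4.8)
The plan is to run the direct method once more, now treating the result as a stability statement in which it is the measurement, rather than a minimizing sequence, that varies; this is essentially a $\Gamma$-convergence argument for the family $\{J^{E_n,q}\}$. First I would secure uniform bounds. Since $[0,T]\times\boundDomDelta$ is compact, pairing the weak-* convergence $E_n\weakstarto E$ with the constant function $1$ gives $|E_n|\to|E|<\infty$, so the masses $|E_n|$ are uniformly bounded. By minimality together with \cref{thm:boundednessInfimum}, $J^{E_n,q}(\rho_n,\momentum_n)=\inf J^{E_n,q}\le C(1+|E_n|)$ is then uniformly bounded, and \cref{thm:coercivity} converts this into uniform bounds on $\norm{\rho_n}$, $\norm{\momentum_n}$, and $\BBEnergy(\rho_n,\momentum_n)$. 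Passing to a non-relabelled subsequence I obtain $(\rho_n,\momentum_n)\weakstarto(\rho,\momentum)$, with the continuity equation and nonnegativity inherited in the limit and $\norm{\rho_n}\to\norm{\rho}$.

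Next I would pass to the limit termwise to get $J^{E,q}(\rho,\momentum)\le\liminf_n J^{E_n,q}(\rho_n,\momentum_n)$. If $|E|=0$ then eventually $|E_n|=0$, forcing $(\rho_n,\momentum_n)=(0,0)$, and the claim is trivial; so assume $|E|\ge1$ and hence $|E_n|\ge1$ for large $n$. The mass term behaves well, $\norm{A\rho_n}=(p^\dt+p^\sct)\norm{\rho_n}\to(p^\dt+p^\sct)\norm{\rho}=\norm{A\rho}$, and $\BBEnergy$ is weakly-* lower semicontinuous. The delicate term is the log-likelihood $-\int\log g_n\,\d E_n$ with $g_n:=\RNderivative{\Aq{q}\rho_n}{\nu}$, since both the density and the integrating measure move. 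I would first rule out $\rho_n\to0$: from $\norm{A\rho_n},\BBEnergy\ge0$ and the upper bound $g_n\le C\norm{\rho_n}$ of \cref{LemmmaBoundednessForwardOp} one has $J^{E_n,q}(\rho_n,\momentum_n)\ge-|E_n|\log(C\norm{\rho_n})$, and boundedness of the left-hand side with $|E_n|\ge1$ forces $\norm{\rho_n}\ge c>0$. Hence $\norm{\rho}>0$, so $g:=\RNderivative{\Aq{q}\rho}{\nu}$ is continuous and bounded below by $\tfrac1C\norm{\rho}>0$ (again \cref{LemmmaBoundednessForwardOp}). By \cref{LemmaContinuityForwardOp} $g_n\to g$ uniformly, and since all densities are uniformly bounded away from zero also $\log g_n\to\log g$ uniformly. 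Then the splitting
\begin{equation*}
\Bigl|\int\log g_n\,\d E_n-\int\log g\,\d E\Bigr|
\le\norm{\log g_n-\log g}_\infty|E_n|+\Bigl|\int\log g\,\d E_n-\int\log g\,\d E\Bigr|
\end{equation*}
sends the first summand to zero by uniform convergence and boundedness of $|E_n|$, and the second to zero by $E_n\weakstarto E$ tested against the continuous function $\log g$. Combining the three terms yields the desired liminf inequality.

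To upgrade the limit to a minimizer I would compare against an arbitrary competitor $(\tilde\rho,\tilde\momentum)$ with $J^{E,q}(\tilde\rho,\tilde\momentum)<\infty$. Then $\tilde\rho$ satisfies the continuity equation and (for $|E|\ge1$) $\norm{\tilde\rho}>0$, so $\tilde g:=\RNderivative{\Aq{q}\tilde\rho}{\nu}$ is continuous and bounded away from zero; only the measurement-dependent term of $J^{\cdot,q}(\tilde\rho,\tilde\momentum)$ depends on $n$, and $\int\log\tilde g\,\d E_n\to\int\log\tilde g\,\d E$ by weak-* convergence against $\log\tilde g$, so $J^{E_n,q}(\tilde\rho,\tilde\momentum)\to J^{E,q}(\tilde\rho,\tilde\momentum)$. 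Minimality of $(\rho_n,\momentum_n)$ then gives
\begin{equation*}
J^{E,q}(\rho,\momentum)\le\liminf_n J^{E_n,q}(\rho_n,\momentum_n)\le\lim_n J^{E_n,q}(\tilde\rho,\tilde\momentum)=J^{E,q}(\tilde\rho,\tilde\momentum),
\end{equation*}
so $(\rho,\momentum)$ minimizes $J^{E,q}$.

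The main obstacle is the joint limit in $\int\log g_n\,\d E_n$, where both integrand and integrating measure vary; the splitting above decouples the two limits, but it works only after two facts are in place: the uniform convergence $g_n\to g$ supplied by \cref{LemmaContinuityForwardOp}, and the uniform lower bound $\norm{\rho_n}\ge c>0$ that keeps the densities and their logarithms uniformly bounded away from zero and makes $\log g$ an admissible continuous test function. Ruling out $\rho_n\to0$ is the one genuinely non-automatic ingredient; everything else recombines the coercivity, lower semicontinuity, and forward-operator continuity already assembled for \cref{thm:ExistenceMinimizers}.
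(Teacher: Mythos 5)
Your proposal is correct and follows essentially the same route as the paper: uniform bounds on $|E_n|$ and hence on $\norm{\rho_n},\norm{\momentum_n}$ via \cref{thm:coercivity,thm:boundednessInfimum}, extraction of a weak-* limit, a liminf inequality from \cref{LemmaContinuityForwardOp} and lower semicontinuity of $\BBEnergy$, and comparison against an arbitrary competitor. The only difference is that you spell out the details the paper leaves implicit — in particular the uniform lower bound $\norm{\rho_n}\geq c>0$ and the splitting of the joint limit in $\int\log g_n\,\d E_n$ — which is a legitimate and careful filling-in rather than a different argument.
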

\begin{proof}
By the weak-* convergence, $|E_n|$ is uniformly bounded, thus by \cref{thm:coercivity,thm:boundednessInfimum} we have uniform boundedness of $\norm{\rho_n},\norm{\momentum_n}$.
As a consequence there exists a weakly-* convergent subsequence, for simplicity again denoted $(\rho_n,\momentum_n)\weakstarto(\rho,\momentum)$.
Now let $(\tilde\rho,\tilde\momentum)$ be any competitor to $(\rho,\momentum)$, then
\begin{equation*}
J^{E,q}(\rho,\momentum)
\leq\lim_{n\to\infty}J^{E_n,q}(\rho_n,\momentum_n)
\leq\lim_{n\to\infty}J^{E_n,q}(\tilde\rho,\tilde\momentum)
=J^{E,q}(\tilde\rho,\tilde\momentum)
\end{equation*}
where the first inequality follows from the weak-* convergence of $\rho_n,\momentum_n$ and \cref{LemmaContinuityForwardOp}
and the last equality from the continuity of $\Aq{q}\tilde\rho$.
\end{proof}

\subsection{Scaling behaviour of reconstruction functional}\label{sec:scaling}
So far we simply ignored the radionuclide halflife for notational simplicity.
Reintroducing it we would get $\bm E=\Poi{\frac{\ln2}{\halflife}A\rho^\dagger}$ for the ground truth radionuclide distribution $\rho^\dagger$
as well as the reconstruction functional
\begin{align*}
J^{\bm E,q,\beta,\halflife,T,\dom}(\rho,\momentum)&=\frac{\ln2}{\halflife}\norm{A\rho}-\int\Log{\frac{\ln2}{\halflife}\RNderivative{\Aq{q}\rho}{\nu}}\d \bm E+\beta\BBEnergy(\rho,\momentum).
\end{align*}
This reconstruction functional contains the regularization parameter $\beta$ which has to be chosen before reconstruction
(recall that a heuristic for the choice of the debiasing parameter $q$ was given in \cref{rem:heuristicDebiasing}).
Its choice of course depends on the system parameters such as the radionuclide halflife, the typical spatial length scale, or the total amount of radionuclide.
To identify this dependence we need to perform a nondimensionalization.
To this end let us introduce the spatiotemporal rescaling
\begin{equation*}
\bar s_{\theta,\lambda}:\R\times\R^3\to\R\times\R^3,\quad
(t,x)\mapsto(\tfrac t\theta,\tfrac x\lambda)
\end{equation*}
with temporal scale $\theta$ and length scale $\lambda$ as well as
\begin{equation*}
\bar S_{\theta,\lambda}:\meas(\R\times\R^3)^k\to\meas(\R\times\R^3)^k,\quad
\mu\mapsto\tfrac1\theta\pushforward{(\bar s_{\theta,\lambda})}{\mu}
\end{equation*}
for any $k\in\N$.
The operator $\bar S_{\theta,\lambda}$ is the natural representation of the spatiotemporal coordinate change $(t,x)\mapsto(t/\theta,x/\lambda)$:
This coordinate change turns a time-dependent mass distribution $\rho_t\in\measp(\R^3)$ into $\pushforward{s_\lambda}\rho_{t/\theta}$ for $s_\lambda(x)=x/\lambda$,
and therefore $\bar S_{\theta,\lambda}$ is chosen to satisfy
\begin{equation*}
\bar S_{\theta,\lambda}(\d t\otimes\rho_t)
=\d t\otimes\pushforward{(s_\lambda)}\rho_{t\theta}.
\end{equation*}
On the other hand, the coordinate change turns an observed measurement $E$ into $S_{\theta,\lambda}E$ for
\begin{equation*}
S_{\theta,\lambda}:\meas(\R\times\R^3\times\R^3)\to\meas(\R\times\R^3\times\R^3),\quad
E\mapsto\pushforward{(s_{\theta,\lambda})}E
\qquad\text{with }
s_{\theta,\lambda}(t,x,y)=(\tfrac t\theta,\tfrac x\lambda,\tfrac y\lambda).
\end{equation*}
The following nondimensionalization is now straightforward.

\begin{lemma}[Scaling invariances of reconstruction functional]\label{thm:scaleInvariance}
Given a time, length, and mass scale $\theta,\lambda,\mu>0$ and $E\in\measp([0,T]\times\boundDomDelta)$ we set
\begin{equation*}
\hat\beta=\beta\mu\frac{\lambda^2}{\theta},\qquad
\halflifeHat=\frac\halflife{\mu\theta},\qquad
\hat T=\frac T\theta,\qquad
\hat\dom=\frac1\lambda\dom,\qquad
\hat E=S_{\theta,\lambda}E.
\end{equation*}
The measures $(\rho,\momentum)\in\measp([0,T]\times\dom)\times\meas([0,T]\times\dom)^3$ minimize $J^{E,q,\beta,\halflife,T,\dom}$
if and only if $(\hat\rho,\hat\momentum)=\bar S_{\theta,\lambda}(\frac1\mu\rho,\frac\theta{\mu\lambda}\momentum)$ minimize $J^{\hat E,q,\hat\beta,\halflifeHat,\hat T,\hat\dom}$.
\end{lemma}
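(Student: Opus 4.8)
The plan is to verify two facts from which the statement follows immediately: that the substitution $(\rho,\momentum)\mapsto(\hat\rho,\hat\momentum)=\bar S_{\theta,\lambda}(\tfrac1\mu\rho,\tfrac\theta{\mu\lambda}\momentum)$ is a bijection between the admissible set $\measp([0,T]\times\dom)\times\meas([0,T]\times\dom)^3$ and its hatted counterpart, and that under this substitution the two functionals differ only by an additive constant independent of $(\rho,\momentum)$. First I would record the disintegrations: writing $s_\lambda(x)=x/\lambda$, the definition of $\bar S_{\theta,\lambda}$ gives $\hat\rho=\d t\otimes\hat\rho_t$ and $\hat\momentum=\d t\otimes\hat\momentum_t$ (with $t\in[0,\hat T]$), where
\[
\hat\rho_t=\tfrac1\mu\pushforward{(s_\lambda)}{\rho_{\theta t}},\qquad
\hat\momentum_t=\tfrac\theta{\mu\lambda}\pushforward{(s_\lambda)}{\momentum_{\theta t}}.
\]
The inverse transformation has the same form with reciprocal scales, pushforwards preserve nonnegativity, and I would check that $(\rho,\momentum)$ solves \eqref{eqn:continuity} if and only if $(\hat\rho,\hat\momentum)$ does; the chosen scalings are precisely those that turn the velocity field $v_t=\RNderivative{\momentum_t}{\rho_t}$ into $\hat v_t(\hat x)=\tfrac\theta\lambda v_{\theta t}(\lambda\hat x)$, i.e.\ the same transport in rescaled coordinates. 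Hence the substitution bijects the admissible sets.

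Next I would treat the three summands separately. The mass term is easiest: since pushforwards preserve total mass one gets $\norm{\hat A\hat\rho}=\tfrac1{\mu\theta}\norm{A\rho}$, and with $\halflifeHat=\halflife/(\mu\theta)$ the prefactor $\ln2/\halflifeHat$ exactly cancels the factor $1/(\mu\theta)$, so the first term is invariant. For the regularizer I would use the velocity transformation above: a change of variables in $\BBEnergy(\hat\rho,\hat\momentum)=\int_0^{\hat T}\int|\hat v_t|^2\,\d\hat\rho_t\,\d t$ produces $\BBEnergy(\hat\rho,\hat\momentum)=\tfrac\theta{\mu\lambda^2}\BBEnergy(\rho,\momentum)$, which together with $\hat\beta=\beta\mu\lambda^2/\theta$ again cancels, so the regularizer is invariant as well.

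The delicate part, which I expect to be the main obstacle, is the log-likelihood term, for which I must control the transformation of the density $\RNderivative{\Aq{q}\rho}{\nu}$. The clean route is the operator-level identity
\[
\hat A^i\hat\rho_t=\tfrac1\mu\pushforward{\sigma_\lambda}{(A^i\rho_{\theta t})},\qquad i\in\{\sct,\dt\},
\]
where $\sigma_\lambda(a,b)=(a/\lambda,b/\lambda)$ maps $\boundDomDelta$ onto $\partial\hat\domDelta\times\partial\hat\domDelta$ and $\hat A^i$ is the forward operator of the rescaled geometry (detector surface $\tfrac1\lambda\partial\domDelta$ and positron kernel $\hat G=\lambda^3G(\lambda\,\cdot)$). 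For $A^\sct$ this is immediate from its definition together with $\hd^2(\partial\hat\domDelta)=\lambda^{-2}\hd^2(\partial\domDelta)$; for $A^\dt$ I would verify it factor by factor along $A^\dt=B_{\mathrm{detectors}}B_{\mathrm{lines}}B_{\mathrm{pr}}$, using $\hat G\ast\hat\rho_t=\tfrac1\mu\pushforward{(s_\lambda)}{(G\ast\rho_{\theta t})}$, the fact that $B_{\mathrm{lines}}$ commutes with rescaling since $\vol_{S^2}$ is scale-free, and the geometric relation $\hat R\circ(s_\lambda\times\id)=\sigma_\lambda\circ R$. Working at the operator level is essential: it sidesteps the explicit density of \cref{LemmaForwardDensitySplitting}, whose geometric Jacobian and X-ray factor do \emph{not} scale homogeneously individually (only their product does). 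Since $\sigma_\lambda$ shrinks lengths by $\lambda$, it satisfies $\pushforward{\sigma_\lambda}{[(\hd^2\otimes\hd^2)\restr\boundDomDelta]}=\lambda^4(\hd^2\otimes\hd^2)\restr(\partial\hat\domDelta\times\partial\hat\domDelta)$, so invariance of Radon--Nikodym derivatives under the bijection $\sigma_\lambda$ upgrades the operator identity to
\[
\RNderivative{\hat A^q\hat\rho}{\hat\nu}(\hat t,\hat a,\hat b)=\tfrac{\lambda^4}\mu\,\RNderivative{\Aq{q}\rho}{\nu}(t,a,b),\qquad (\hat t,\hat a,\hat b)=s_{\theta,\lambda}(t,a,b),
\]
with $\hat A^q=qp^\sct\hat A^\sct+p^\dt\hat A^\dt$ and $\hat\nu=\d t\otimes(\hd^2\restr\partial\hat\domDelta)\otimes(\hd^2\restr\partial\hat\domDelta)$.

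Finally I would assemble the pieces. Since $\hat E=\pushforward{(s_{\theta,\lambda})}{E}$, integration against $\d\hat E$ is the sum over the images of the points of $E$; combining this with $\halflifeHat=\halflife/(\mu\theta)$ and the density identity above, the rescaled log-term equals the original one minus the constant $|E|\log(\theta\lambda^4)$. Collecting the three contributions yields
\[
J^{\hat E,q,\hat\beta,\halflifeHat,\hat T,\hat\dom}(\hat\rho,\hat\momentum)
=J^{E,q,\beta,\halflife,T,\dom}(\rho,\momentum)-|E|\log(\theta\lambda^4),
\]
an additive constant independent of the optimization variables. Together with the bijection of admissible sets, this proves that $(\rho,\momentum)$ minimizes $J^{E,q,\beta,\halflife,T,\dom}$ if and only if $(\hat\rho,\hat\momentum)$ minimizes $J^{\hat E,q,\hat\beta,\halflifeHat,\hat T,\hat\dom}$. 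The discrete setting would follow by the same argument, with the $\sigma_\lambda$-scaling of $\nu$ replaced by the analogous rescaling of the detector partition and time bins.
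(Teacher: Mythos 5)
Your proposal is correct and follows exactly the route the paper takes: the paper's entire proof is the assertion that $J^{E,q,\beta,\halflife,T,\dom}(\rho,\momentum)=J^{\hat E,q,\hat\beta,\halflifeHat,\hat T,\hat\dom}(\hat\rho,\hat\momentum)+R$ with $R$ independent of the optimization variables, and you simply work that ``straightforward identity'' out term by term, arriving at the correct constant $R=|E|\log(\theta\lambda^4)$ together with the (needed but unstated in the paper) bijectivity of the substitution between admissible sets.
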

\begin{proof}
This immediately follows from the straightforward identity
$J^{E,q,\beta,\halflife,T,\dom}(\rho,\momentum)=J^{\hat E,q,\hat\beta,\halflifeHat,\hat T,\hat\dom}(\hat\rho,\hat\momentum)+R$,
where the remainder $R$ is independent of $(\hat\rho,\hat\momentum)$.
\end{proof}

In other words, if two PET scans are conducted whose measurements are coincidentally related by the simple spacetime rescaling $s_{\theta,\lambda}$
and if the halflifes of the employed radionuclides are $\halflife$ and $\halflifeHat$, respectively,
then by choosing the regularization parameters $\beta$ and $\hat\beta=\beta\frac\halflifeHat\halflife\frac{\lambda^2}{\theta^2}$, respectively,
one obtains the same reconstruction up to a simple mass and spacetime rescaling.
Of course, the question naturally arises under what circumstances the measurements of two experiments are related by a simple spacetime rescaling, at least in law.
This question is answered in the next statement,
for which we decorate the random variable $\boldsymbol E$ with the parameters it depends on, in particular the groundtruth mass distribution $\rho^\dagger$,
\begin{equation*}
\boldsymbol E^{\rho^\dagger,\halflife,T,\dom}=\Poi{\tfrac{\ln2}\halflife A\rho^\dagger}.
\end{equation*}

\begin{proposition}[Scaling invariance of measurement]\label{thm:invarianceMeasurement}
Given a time, length, and mass scale $\theta,\lambda,\mu>0$ we set
\begin{equation*}
\hat\rho^\dagger=\frac1\mu\bar S_{\theta,\lambda}\rho^\dagger,\qquad
\halflifeHat=\frac\halflife{\mu\theta},\qquad
\hat T=\frac T\theta,\qquad
\hat\dom=\frac1\lambda\dom.
\end{equation*}
Then the law of $\boldsymbol E^{\hat\rho^\dagger,\halflifeHat,\hat T,\hat\dom}$
equals the pushforward of the law of $\boldsymbol E^{\rho^\dagger,\halflife,T,\dom}$ under the map $S_{\theta,\lambda}$.
\end{proposition}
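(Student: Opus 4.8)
The plan is to reduce the claim to the mapping theorem for Poisson point processes, after which only an equivariance property of the intensity measure under rescaling remains. First I would recall that for a measurable map $f$ the pushforward of $\Poi{\mu}$ is again a Poisson point process with intensity $\pushforward{f}{\mu}$ (the mapping theorem, see \cite{bookPPP_last_penrose}). Applying this to the map $s_{\theta,\lambda}$ underlying $S_{\theta,\lambda}$ shows that the pushforward under $S_{\theta,\lambda}$ of the law of $\boldsymbol E^{\rho^\dagger,\halflife,T,\dom}=\Poi{\frac{\ln2}{\halflife}A\rho^\dagger}$ is the law of $\Poi{\pushforward{(s_{\theta,\lambda})}{(\frac{\ln2}{\halflife}A\rho^\dagger)}}$. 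Since two Poisson point processes agree in law as soon as their intensities coincide, it suffices to verify the identity of intensity measures
\[
\pushforward{(s_{\theta,\lambda})}{\Bigl(\tfrac{\ln2}{\halflife} A\rho^\dagger\Bigr)}=\tfrac{\ln2}{\halflifeHat}\hat A\hat\rho^\dagger,
\]
where $\hat A$ denotes the forward operator associated with the rescaled geometry $\hat\dom\subset\hat\domDelta=\frac1\lambda\domDelta$.

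Next I would peel off the temporal and mass scalings. Inserting $\halflifeHat=\halflife/(\mu\theta)$ and $\hat\rho^\dagger=\frac1\mu\bar S_{\theta,\lambda}\rho^\dagger$ and using linearity of $\hat A$, the displayed identity is equivalent to $\pushforward{(s_{\theta,\lambda})}{(A\rho^\dagger)}=\theta\,\hat A(\bar S_{\theta,\lambda}\rho^\dagger)$. Writing $\rho^\dagger=\d t\otimes\rho_t^\dagger$, the pushforward of the temporal Lebesgue factor under $t\mapsto t/\theta$ contributes exactly the factor $\theta$ and reparametrises the slices, so that, using $\bar S_{\theta,\lambda}(\d t\otimes\rho_t^\dagger)=\d t\otimes\pushforward{(s_\lambda)}{\rho_{t\theta}^\dagger}$, both sides disintegrate in time and the claim reduces to the purely spatial, slicewise equivariance
\[
\pushforward{(\sigma_\lambda)}{(A\kappa)}=\hat A\bigl(\pushforward{(s_\lambda)}{\kappa}\bigr)\qquad\text{for every }\kappa\in\measp(\dom),
\]
with $s_\lambda(x)=x/\lambda$ and $\sigma_\lambda(a,b)=(a/\lambda,b/\lambda)$ on $\boundDomDelta$.

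I would then establish this spatial equivariance componentwise along $A=p^\att A^\att+p^\sct A^\sct+p^\dt A^\dt$, the probabilities being identical in both geometries. The attenuation part vanishes trivially. For the scatter part one uses $\pushforward{(s_\lambda)}{(\hd^2\restr\partial\domDelta)}=\lambda^2\,\hd^2\restr\partial\hat\domDelta$, whence the factor $\lambda^4$ produced by the tensor reference measure cancels against $\hd^2(\partial\domDelta)^2=\lambda^4\hd^2(\partial\hat\domDelta)^2$, while $\kappa(\dom)=(\pushforward{(s_\lambda)}{\kappa})(\hat\dom)$; this yields $\pushforward{(\sigma_\lambda)}{(A^\sct\kappa)}=\hat A^\sct(\pushforward{(s_\lambda)}{\kappa})$. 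For the detection part $A^\dt=B_{\mathrm{detectors}}B_{\mathrm{lines}}B_{\mathrm{pr}}$ I would verify the three factors separately: $B_{\mathrm{lines}}$ commutes with $s_\lambda\times\id_{S^2}$ because directions are unaffected by rescaling; the detection map is scale-equivariant, $\sigma_\lambda\circ R=\hat R\circ(s_\lambda\times\id_{S^2})$, since rescaling the intersection of the line $x+\R v$ with $\partial\domDelta$ gives the intersection of $x/\lambda+\R v$ with $\partial\hat\domDelta$; and the positron-range convolution intertwines, $\pushforward{(s_\lambda)}{(G\ast\kappa)}=\hat G\ast\pushforward{(s_\lambda)}{\kappa}$, provided the rescaled geometry employs the rescaled kernel $\hat G(z)=\lambda^3G(\lambda z)$ (which again has unit mass and support in $B_{\delta/(2\lambda)}(0)$), as a direct change of variables shows. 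Chaining these three identities gives $\pushforward{(\sigma_\lambda)}{(A^\dt\kappa)}=\hat A^\dt(\pushforward{(s_\lambda)}{\kappa})$ and closes the argument.

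I expect the main obstacle to be purely technical: the careful bookkeeping of the powers of $\lambda$ in the detection operator, and especially making explicit that the forward operator in the rescaled coordinates must use the rescaled positron-range kernel $\hat G$. Once this normalisation is fixed, each elementary building block is individually equivariant and the chain closes; the only computations requiring care are the convolution change of variables and the cancellation in $A^\sct$, both of which rely on the scaling of Hausdorff measures and Lebesgue densities under $x\mapsto x/\lambda$.
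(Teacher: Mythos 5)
Your proposal is correct and takes essentially the same route as the paper: both invoke the mapping theorem for Poisson point processes and then reduce the claim to the identity $\pushforward{(s_{\theta,\lambda})}{(\tfrac{\ln2}{\halflife}A\rho^\dagger)}=\tfrac{\ln2}{\halflifeHat}A\hat\rho^\dagger$ of intensity measures, which the paper dismisses as immediate while you verify it componentwise. Your explicit bookkeeping — in particular the observation that the forward operator in the rescaled geometry must use the rescaled positron-range kernel $\hat G(z)=\lambda^3G(\lambda z)$ — is a correct and worthwhile elaboration of what the paper only hints at in its closing parenthetical remark about the two forward operators living on different domains.
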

\begin{proof}
By the mapping theorem \cite[Thm.\,5.1]{bookPPP_last_penrose},
given a Poisson point process $\boldsymbol E=\Poi{\lambda}$ with intensity $\lambda$,
any composition $F\circ\boldsymbol E$ with the pushforward $F=\pushforward f{}$ under a measurable map $f$ is also a Poisson point process and has intensity $\pushforward f\lambda$.
Therefore, $S_{\theta,\lambda}\circ\boldsymbol E^{\rho^\dagger,\halflife,T,\dom}$ is a Poisson point process with intensity
\begin{equation*}
\pushforward{(s_{\theta,\lambda})}{(\tfrac{\ln2}\halflife A\rho^\dagger)}
=\tfrac{\ln2}{\halflifeHat} A(\hat\rho^\dagger),
\end{equation*}
thus $S_{\theta,\lambda}\circ\boldsymbol E^{\rho^\dagger,\halflife,T,\dom}=\boldsymbol E^{\hat\rho^\dagger,\halflifeHat,\hat T,\hat\dom}$.
(Note that the forward operator $A$ on the left-hand side is implicitly understood as the one for the domain $\dom$,
while $A$ on the right-hand side is the forward operator for domain $\hat\dom$.)
\end{proof}

As a consequence of the previous statements we can determine when our reconstructions will be related (in law) by simple rescalings.
To this end let us introduce the (set-valued) reconstruction mapping
\begin{equation*}
\reconMap^{q,\beta,\halflife,T,\dom}:
E\mapsto\begin{cases}\argmin_{(\rho,\momentum)\in}J^{E,q,\beta,\halflife,T,\dom}(\rho,\momentum)&\text{if }|E|<\infty,\\
\emptyset&\text{else,}
\end{cases}
\end{equation*}
which is well-defined by \cref{thm:ExistenceMinimizers}.
It is set-valued due to the potential nonuniqueness of minimizers,
and for measurements with $|E|=\infty$ (which occur with zero probability and for which a minimizer might not exist) we simply set it to the empty set.
In \cref{sec:measurability} we prove the map $\reconMap^{q,\beta,\halflife,T,\dom}$ to be measurable (the corresponding measurable spaces are also specified in \cref{sec:measurability}).
Therefore, the (stochastic) reconstruction
\begin{equation*}
\boldsymbol\recon^{\rho^\dagger,q,\beta,\halflife,T,\dom}
=\reconMap^{q,\beta,\halflife,T,\dom}\circ\boldsymbol E^{\rho^\dagger,\halflife,T,\dom}
\end{equation*}
is a random variable, and we obtain the following.

\begin{corollary}[Scaling invariance of reconstruction]
Using the notation from \cref{thm:scaleInvariance,thm:invarianceMeasurement},
the law of $\boldsymbol\recon^{\hat\rho^\dagger,q,\hat\beta,\halflifeHat,\hat T,\hat\dom}$ equals the pushforward
of the law of $\boldsymbol\recon^{\rho^\dagger,q,\beta,\halflife,T,\dom}$ under the map $\bar S_{\theta,\lambda}$.
\end{corollary}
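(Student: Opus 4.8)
The plan is to obtain the statement purely formally by combining the two preceding propositions: the deterministic commutation between reconstruction and rescaling encoded in \cref{thm:scaleInvariance}, and the distributional identity for the measurement in \cref{thm:invarianceMeasurement}, and then to propagate the law through the (measurable) reconstruction map. Throughout I abbreviate $\reconMap=\reconMap^{q,\beta,\halflife,T,\dom}$ and $\hat\reconMap=\reconMap^{q,\hat\beta,\halflifeHat,\hat T,\hat\dom}$, and I write $\bar S_{\theta,\lambda}$ also for the reconstruction-space map $(\rho,\momentum)\mapsto\bar S_{\theta,\lambda}(\tfrac1\mu\rho,\tfrac\theta{\mu\lambda}\momentum)$ appearing in \cref{thm:scaleInvariance} (this is the identification of notation that the corollary tacitly makes).

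First I would recast \cref{thm:scaleInvariance} as an identity of set-valued maps. Its ``if and only if'' states exactly that $(\rho,\momentum)$ lies in $\argmin J^{E,q,\beta,\halflife,T,\dom}$ precisely when $\bar S_{\theta,\lambda}(\rho,\momentum)$ lies in $\argmin J^{S_{\theta,\lambda}E,q,\hat\beta,\halflifeHat,\hat T,\hat\dom}$. Since $\bar S_{\theta,\lambda}$ is a bijection of the reconstruction space (the underlying $\bar s_{\theta,\lambda}$ is a bijection and the scalar factors $\tfrac1\mu,\tfrac\theta{\mu\lambda}$ are nonzero), this equivalence upgrades to the set equality $\hat\reconMap(S_{\theta,\lambda}E)=\bar S_{\theta,\lambda}\,\reconMap(E)$ for every $E$ with $|E|<\infty$. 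For $|E|=\infty$ both sides are empty by definition of $\reconMap$, and because $S_{\theta,\lambda}$ is a pushforward under a bijection it preserves the cardinality $|E|$; hence the commutation identity
\begin{equation*}
\hat\reconMap\circ S_{\theta,\lambda}=\bar S_{\theta,\lambda}\circ\reconMap
\end{equation*}
holds on all of $\measp([0,T]\times\boundDomDelta)$.

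Next I would feed in the measurement law. By \cref{thm:invarianceMeasurement} the law of $\boldsymbol E^{\hat\rho^\dagger,\halflifeHat,\hat T,\hat\dom}$ is the pushforward under $S_{\theta,\lambda}$ of the law of $\boldsymbol E^{\rho^\dagger,\halflife,T,\dom}$. Since $\hat\reconMap$ is measurable into the target specified in \cref{sec:measurability}, composing two random variables of equal law with the \emph{same} measurable map preserves equality of laws, so the law of $\boldsymbol\recon^{\hat\rho^\dagger,q,\hat\beta,\halflifeHat,\hat T,\hat\dom}=\hat\reconMap\circ\boldsymbol E^{\hat\rho^\dagger,\halflifeHat,\hat T,\hat\dom}$ equals the pushforward of the law of $\boldsymbol E^{\rho^\dagger,\halflife,T,\dom}$ under $\hat\reconMap\circ S_{\theta,\lambda}$. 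Inserting the commutation identity of the previous step rewrites this as the pushforward under $\bar S_{\theta,\lambda}\circ\reconMap$, i.e.\ the pushforward under $\bar S_{\theta,\lambda}$ of the pushforward under $\reconMap$; the latter is by definition the law of $\boldsymbol\recon^{\rho^\dagger,q,\beta,\halflife,T,\dom}$. This is exactly the claimed equality of laws.

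The two cited propositions do all the genuine work, so the remaining points are bookkeeping. The one that needs care is the correct identification of the reconstruction-space map: the mass factor $\tfrac1\mu$ and the momentum factor $\tfrac\theta{\mu\lambda}$ from \cref{thm:scaleInvariance} must be folded into what the corollary writes as $\bar S_{\theta,\lambda}$, and one should check (as above) that this combined map is a bijection so that the ``iff'' becomes a genuine equality of $\argmin$ sets. The only other, mild, obstacle is justifying that a distributional identity may be pushed through the \emph{set-valued} map $\hat\reconMap$; this reduces to the standard pushforward argument once the hyperspace target $\sigma$-algebra from \cref{sec:measurability} is fixed and $\hat\reconMap$ is known to be measurable with respect to it.
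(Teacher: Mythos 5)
Your proposal is correct and follows essentially the same route as the paper: establish the commutation identity $\reconMap^{q,\hat\beta,\halflifeHat,\hat T,\hat\dom}\circ S_{\theta,\lambda}=\bar S_{\theta,\lambda}\circ\reconMap^{q,\beta,\halflife,T,\dom}$ from \cref{thm:scaleInvariance} and then chain it with \cref{thm:invarianceMeasurement} to push the law through. Your added remarks on the bijectivity of the combined rescaling map, the $|E|=\infty$ case, and the measurability of the set-valued map merely spell out bookkeeping the paper leaves implicit.
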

\begin{proof}
By \cref{thm:scaleInvariance} we have $\bar S_{\theta,\lambda}\circ\reconMap^{q,\beta,\halflife,T,\dom}=\reconMap^{q,\hat\beta,\halflifeHat,\hat T,\hat\dom}\circ S_{\theta,\lambda}$.
The result now follows from \cref{thm:invarianceMeasurement} via
\begin{multline*}
\boldsymbol\recon^{\hat\rho^\dagger,q,\hat\beta,\halflifeHat,\hat T,\hat\dom}
=\reconMap^{q,\hat\beta,\halflifeHat,\hat T,\hat\dom}\circ\boldsymbol E^{\hat\rho^\dagger,\halflifeHat,\hat T,\hat\dom}
=\reconMap^{q,\hat\beta,\halflifeHat,\hat T,\hat\dom}\circ S_{\theta,\lambda}\circ\boldsymbol E^{\rho^\dagger,\halflife,T,\dom}\\
=\bar S_{\theta,\lambda}\circ\reconMap^{q,\beta,\halflife,T,\dom}\circ\boldsymbol E^{\rho^\dagger,\halflife,T,\dom}
=\bar S_{\theta,\lambda}\circ\boldsymbol\recon^{\rho^\dagger,q,\beta,\halflife,T,\dom}.
\qedhere
\end{multline*}
\end{proof}

Let us now return to the question of finding a good regularization parameter $\beta$.
Due to the above scale invariance it suffices to restrict to the situation
in which the total mass $\|\rho^\dagger_t\|$ for all times $t$, the spatial scale $l$ of $\rho^\dagger$ (for instance the typical bending radius of the particle trajectories), and the typical particle velocity $v$ of $\rho^\dagger$ are all equal to one.
In other words, we pick $\theta=l/v$, $\lambda=l$, and $\mu=\|\rho^\dagger_t\|$ and seek the optimal $\hat\beta$ for the resulting $\halflifeHat$, $\hat T$, and $\hat\dom$.
The original $\beta$ is then obtained as $\beta=\hat\beta\frac{\theta}{\mu\lambda^2}$.
The optimal $\hat\beta$ is that for which the reconstruction $\hat\rho$ deviates the least from the ground truth $\hat\rho^\dagger$,
averaged over all $\hat\rho^\dagger$ with unit mass, unit spatial, and unit velocity scale as well as over all associated measurements.
This $\hat\beta$ will have to be determined experimentally based on realistic samples of $\hat\rho^\dagger$.
In principle $\hat\beta$ may depend on $\halflifeHat$, $\hat T$, and $\hat\dom$.
However, the dependence on $\hat T$ is expected to be negligible:
The reconstruction from measurements on a long time interval should behave roughly the same
as when the long time interval is split into shorter ones on which one performs separate reconstructions.
A similar consideration suggests that $\hat\beta$ only weakly depends on $\hat\dom$.
Thus we expect the optimal $\hat\beta$ to be a function $\hat\beta(\halflifeHat)$ so that one should pick $\beta=\hat\beta(\halflifeHat)\frac{\theta}{\mu\lambda^2}$ or equivalently
\begin{equation*}
\beta=\frac{\hat\beta\left(\frac{v\halflife}{l\|\rho^\dagger_t\|}\right)}{\halflife v^2}
\qquad\text{for some function }\hat\beta,
\end{equation*}
where $v$ is the typical particle velocity, $l$ the typical spatial scale of the particle trajectories, and $\|\rho^\dagger_t\|$ the used amount of radionuclide.

To close the section, let us briefly illustrate the invariances for $\theta,\lambda,\mu>0$:
\begin{enumerate}
\item
If $\hat\rho^\dagger=\rho^\dagger/\mu$,
$\halflifeHat=\halflife/\mu$,
$\hat T=T$,
$\hat\dom=\dom$,
$\hat\beta=\beta\mu$,
then in law the reconstruction satisfies $\hat\rho=\rho/\mu$.
\item
If $\hat\rho^\dagger_t=\rho^\dagger_{\theta t}$,
$\halflifeHat=\halflife/\theta$,
$\hat T=T/\theta$,
$\hat\dom=\dom$,
$\hat\beta=\beta/\theta$,
then in law the reconstruction satisfies $\hat\rho_t=\rho_{\theta t}$.
\item
If $\hat\rho^\dagger_t(A)=\rho^\dagger_t(\lambda A)$ for all measurable $A\subset\R^3$,
$\halflifeHat=\halflife$,
$\hat T=T$,
$\hat\dom=\frac1\lambda\dom$,
$\hat\beta=\beta\lambda^2$,
then in law the reconstruction satisfies $\hat\rho_t(A)=\rho_{\theta t}(\lambda A)$ for all measurable $A\subset\R^3$.
\end{enumerate}

\subsection{Structure of minimizers}\label{sec:minimizerStructure}
Our reconstruction functional essentially consists of a convex term penalizing the deviation from a finite measurement $E$ and a convex regularization.
Via so-called representer theorems it can often be shown that functionals of such type have minimizers
that are finite linear combinations (where the number depends on the dimension of the measurement) of extreme points of the $1$-sublevel set of the regularization.
A particular instance of this phenomenon for the setting of Benamou--Brenier optimal transport regularization is the following.

\begin{theorem}[{\cite[Thm.\,10]{Bredies_extremePointsBB}}]\label{Thm_Bredies_characterization}
Let $\dom\subset\R^d$ be the closure of an open bounded domain,
$H$ a finite-dimensional Hilbert space,
and denote with $C_W([0,1];\meas(\dom))$ the family of narrowly continuous curves in $\meas(\dom)$.
Consider the functional $J:C_W([0,1];\meas(\dom))\times\meas(\spacetime)^d\to\R\cup\{\infty\}$,
\begin{align*}
J(\rho,\momentum)=F(B\rho)+\alpha\norm{\rho}+\beta\BBEnergy(\rho,\momentum),
\end{align*}
where $F:H\to\R\cup\{\infty\}$ is convex, lower semi-continuous, and bounded from below,
$B:C_W([0,1];\meas(\dom))\to H$ is linear and continuous in the sense that $\rho^n_t\weakstarto\rho_t$ in $\meas(\dom)$ for every $t$ implies $B\rho^n\to B\rho$ in $H$,
and $J$ is proper.
Then for any $\alpha,\beta>0$ there exists a minimizer $(\hat\rho,\hat\momentum)$ of $J$ of the form
\begin{align*}
(\hat\rho,\hat\momentum) = \sum_{i=1}^nc_i(\rho^i,\momentum^i),
\qquad\text{with}\quad
c_i>0,\qquad
\rho^i = \d t\otimes\delta_{\gamma_i(t)}, \qquad
\momentum^i=\dot\gamma_i\rho^i,\qquad
i=1,\ldots,n,
\end{align*}
where $K\le\mathrm{dim}(H)<\infty$ and
$\gamma_i\in H^1((0,1))^d$, the Sobolev space of curves in $\R^d$ with square-integrable weak derivative, with $\gamma_i(t)\in\dom$ for each $t\in[0,1]$.
\end{theorem}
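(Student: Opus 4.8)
This is a representer-type (``sparsity'') theorem, and the plan is to derive it from two ingredients: an abstract sparsity principle for variational problems with finite-dimensional data, and a characterization of the extremal points of the sublevel sets of the regularizer. Write $R(\rho,\momentum)=\alpha\norm{\rho}+\beta\BBEnergy(\rho,\momentum)$ for the regularizing part, so that $J=F\circ B+R$. The decisive structural observation is that $R$ is convex, weak-* lower semi-continuous, positively $1$-homogeneous and coercive: $1$-homogeneity of $\norm{\cdot}$ is clear and $\BBEnergy(c\rho,c\momentum)=c\,\BBEnergy(\rho,\momentum)$ for $c\ge0$ because the density $\d\momentum_t/\d\rho_t$ is scale invariant, while coercivity follows from $\norm{\momentum}\le\norm{\rho}^{1/2}\BBEnergy(\rho,\momentum)^{1/2}$ exactly as in the proof of \cref{thm:coercivity}. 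Hence the sublevel set $\mathcal K=\{(\rho,\momentum)\,|\,R(\rho,\momentum)\le1\}$ is convex, bounded, and weak-* closed, thus weak-* compact, and by the Krein--Milman theorem it is the closed convex hull of its extremal points.

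Granting this, I would invoke the abstract sparsity principle for coercive, lsc, convex, $1$-homogeneous regularizers (in the spirit of the Bredies--Carioni framework for inverse problems with finite-dimensional measurements): for a proper problem $\min F(Bu)+R(u)$ with $F$ convex, lsc and bounded below on a finite-dimensional space $H$, $B$ linear and continuous in the stated weak-*-to-norm sense, and $R$ as above, there exists a minimizer of the form $\hat u=\sum_{i=1}^n c_i u_i$ with $c_i>0$, $n\le\dim(H)$, and each $u_i$ an extremal point of $\mathcal K$. The hypotheses of this principle are exactly those imposed in the theorem statement, so existence of such a structured minimizer is immediate; the bound $n\le\dim(H)$ arises because an optimal $\hat u$ may be taken on a face of a scaled copy of $\mathcal K$ cut out by the $\dim(H)$ linear data constraints encoded in $B\hat u$, to whose extremal points one applies Carath\'eodory's theorem.

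It then remains to identify the extremal points of $\mathcal K$, which is the technical heart of the argument and the content of the cited extremal-point analysis \cite{Bredies_extremePointsBB}. The claim is that they are precisely the normalized single-particle pairs $(\hat\rho,\hat\momentum)=c\,(\d t\otimes\delta_{\gamma(t)},\,\dot\gamma\,\d t\otimes\delta_{\gamma(t)})$ with $\gamma\in H^1((0,1))^d$ taking values in $\dom$ and $c>0$ chosen so that $R=1$. The natural route is the superposition principle for continuity-equation solutions of finite action (Ambrosio--Lisini): any $(\rho,\momentum)$ with $\BBEnergy(\rho,\momentum)<\infty$ disintegrates as $\rho=\d t\otimes\pushforward{\eval_t}\sigma$ and $\momentum=\d t\otimes\pushforward{\eval_t}{(\dot\gamma\sigma)}$ for a measure $\sigma$ on absolutely continuous curves, with $R$ decomposing as an integral of the single-curve values against $\sigma$. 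This exhibits every element of $\mathcal K$ as a superposition of single-particle elements, so any element that is not (up to rescaling) a single curve is a genuine convex combination and hence not extremal, while single curves cannot be split without strictly increasing the kinetic action and are therefore extremal. I expect this extremal-point characterization to be the main obstacle -- in particular the correct bookkeeping of the term $\alpha\norm{\rho}$ in the superposition and the strict-convexity argument ruling out decompositions of a single curve -- and in the present article it is not reproved but invoked directly, so that my plan reduces the theorem to that characterization together with the abstract sparsity principle.
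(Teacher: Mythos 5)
This statement is imported verbatim from \cite[Thm.\,10]{Bredies_extremePointsBB}; the paper gives no proof of it, so there is no in-paper argument to compare against. Your two-ingredient outline does, however, faithfully reconstruct the strategy of the cited work: (i) the regularizer $R(\rho,\momentum)=\alpha\norm{\rho}+\beta\BBEnergy(\rho,\momentum)$ is convex, weak-* lower semi-continuous, jointly positively $1$-homogeneous and coercive, so its unit ball is weak-* compact and the abstract representer theorem for variational problems with finite-dimensional data yields a minimizer that is a conic combination of at most $\dim(H)$ extremal points of a sublevel set of $R$; (ii) the extremal points of that ball are exactly $(0,0)$ and the normalized travelling Dirac pairs, which is the actual content of the cited reference. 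Your homogeneity and coercivity checks are correct, and you rightly flag the extremal-point characterization as the technical heart. Two caveats: as written your argument is a reduction to two other black boxes (the Bredies--Carioni sparsity principle and the superposition/decomposition of finite-action solutions of the continuity equation) rather than a proof, which is acceptable only because the paper itself treats the theorem as a citation; and your stated reason for extremality of a single curve (``cannot be split without strictly increasing the kinetic action'') is not quite the mechanism --- the actual argument is that any convex decomposition of $\d t\otimes\delta_{\gamma(t)}$ forces both summands to be absolutely continuous with respect to it at each time, whence the continuity equation makes them proportional; there is no strict convexity to exploit, since $R$ is $1$-homogeneous.
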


In other words, the minimizer is a finite linear combination of travelling Dirac masses.
(\cite[Thm.\,10]{Bredies_extremePointsBB} in addition states that the regularization $\alpha\norm{\hat\rho}+\beta\BBEnergy(\hat\rho,\hat\momentum)$ of the minimizer
equals the sum of the regularization values of its single components $(\rho^i,\momentum^i)$, which readily follows from its structure.)
This result can immediately be applied to our reconstruction functional to show that also our reconstructions will be finite linear combinations of travelling Dirac masses.

\begin{corollary}[Structure of reconstructions]\label{thm:minimizerStructure}
Let $\beta,p^\dt,p^\sct,q>0$ and $|E|<\infty$,
then $J^{E,q}$ has a minimizer $(\hat\rho,\hat\momentum)$ of the form
\begin{align*}
(\hat\rho,\hat\momentum) = \sum_{i=1}^nc_i(\rho^i,\momentum^i),
\qquad\text{with}\quad
c_i>0,\qquad
\rho^i = \d t\otimes\delta_{\gamma_i(t)}, \qquad
\momentum^i=\dot\gamma_i\rho^i,\qquad
i=1,\ldots,n,
\end{align*}
where $n\le|E|$ and $\gamma_i\in H^1((0,T))^d$ with $\gamma_i(t)\in\dom$ for each $t\in[0,T]$.
\end{corollary}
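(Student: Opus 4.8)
The plan is to cast $J^{E,q}$ into the abstract form required by \cref{Thm_Bredies_characterization} and then invoke that representer theorem; the cited result holds verbatim on $[0,T]$ (or after the trivial time rescaling $t\mapsto t/T$ to the unit interval). I would take the finite-dimensional Hilbert space $H=\R^{|E|}$ and define the linear observation operator
\begin{equation*}
B\rho=\left(\RNderivative{(\Aq{q}\rho)}{\nu}(t_k,a_k,b_k)\right)_{k=1,\ldots,|E|},
\end{equation*}
where $(t_k,a_k,b_k)$ enumerates the points of $E$ counted with multiplicity. The data term of $J^{E,q}$ is then exactly $F(B\rho)$ for the convex, lower semi-continuous function $F\colon\R^{|E|}\to\R\cup\{\infty\}$, $F(y)=-\sum_k\log y_k$ (set to $+\infty$ as soon as some $y_k\le0$). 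For the mass term I would record that whenever $\BBEnergy(\rho,\momentum)<\infty$, and hence $\rho=\d t\otimes\rho_t\ge0$, both $A^\sct$ and $A^\dt$ preserve the slicewise mass --- the former by definition, the latter because $G$ has unit integral, $\vol_{S^2}$ is normalized, and $R$ is defined on all of $\domDeltaHalf\times S^2$ so that no photon pair is lost. Consequently $\norm{A\rho}=(p^\sct+p^\dt)\norm\rho$, and $J^{E,q}=F(B\rho)+\alpha\norm\rho+\beta\BBEnergy$ with $\alpha=p^\sct+p^\dt>0$, matching the structure of \cref{Thm_Bredies_characterization}.

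The one hypothesis that fails for this choice is that $F$ is \emph{not} bounded below, since $-\log y_k\to-\infty$ as $y_k\to\infty$; resolving this is the main obstacle. By \cref{LemmmaBoundednessForwardOp} the components of $B\rho$ satisfy $y_k\le C\norm\rho$, so the troublesome regime corresponds to $\norm\rho\to\infty$, which is controlled by coercivity. Concretely, let $(\hat\rho,\hat\momentum)$ be a minimizer of $J^{E,q}$ (which exists by \cref{thm:ExistenceMinimizers}), let $C'$ be the constant of \cref{thm:coercivity}, and set $M_0=C'(1+\min J^{E,q})$, so that every $(\rho,\momentum)$ with $J^{E,q}(\rho,\momentum)\le\min J^{E,q}$ has $\norm\rho\le M_0$. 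Fixing $M>M_0$, I would replace $F$ by
\begin{equation*}
\tilde F(y)=\sum_{k=1}^{|E|}\max\{-\log y_k,\,-\log(CM)\}
\end{equation*}
(again $+\infty$ if some $y_k\le0$), which is convex, lower semi-continuous, and bounded below by $-|E|\log(CM)$, and which agrees with $F$ on the box $\{y:0<y_k\le CM\ \forall k\}$. The modified functional $\tilde J=\tilde F(B\rho)+\alpha\norm\rho+\beta\BBEnergy$ then satisfies $\tilde J\ge J^{E,q}$ everywhere and $\tilde J=J^{E,q}$ on $\{\norm\rho\le M\}$, because there $y_k\le C\norm\rho\le CM$. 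A short argument shows the two functionals share their minimizers: any minimizer $(\rho^*,\momentum^*)$ of $\tilde J$ obeys $J^{E,q}(\rho^*,\momentum^*)\le\tilde J(\rho^*,\momentum^*)=\min\tilde J\le\tilde J(\hat\rho,\hat\momentum)=\min J^{E,q}$, so $\norm{\rho^*}\le M_0<M$ by \cref{thm:coercivity}, whence $\tilde J(\rho^*,\momentum^*)=J^{E,q}(\rho^*,\momentum^*)$ and $(\rho^*,\momentum^*)$ minimizes $J^{E,q}$.

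It then remains to verify the continuity of $B$ demanded by \cref{Thm_Bredies_characterization}, namely that $\rho^n_t\weakstarto\rho_t$ in $\meas(\dom)$ for every $t$ forces $B\rho^n\to B\rho$. Using \cref{LemmaForwardDensitySplitting} and the definition of $A^\sct$, each component of $B\rho$ reads
\begin{equation*}
\RNderivative{(\Aq{q}\rho)}{\nu}(t_k,a_k,b_k)
=\frac{qp^\sct}{\hd^2(\partial\domDelta)^2}\int_\dom 1\,\d\rho_{t_k}
+p^\dt g(a_k,b_k)\int_\dom\left(\int_\R G(s_k+r\theta_k-x)\,\d r\right)\d\rho_{t_k}(x),
\end{equation*}
with $\theta_k=\theta(a_k,b_k)$ and $s_k=s(a_k,b_k)$; since both integrands are continuous and bounded on the compact set $\dom$, these are weak-* continuous linear functionals of the single slice $\rho_{t_k}$, which (as there are only finitely many components) gives the required continuity of $B$. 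Properness of $\tilde J$ is immediate, e.g.\ at $\rho=\mathcal L^4\restr\spacetime$, $\momentum=0$, where \cref{LemmmaBoundednessForwardOp} guarantees strictly positive densities. With $\dom$ compact convex being the closure of its (open, bounded) interior, all hypotheses of \cref{Thm_Bredies_characterization} hold for $\tilde J$ with $\dim H=|E|$, and it supplies a minimizer of $\tilde J$ of the stated form $\sum_{i=1}^n c_i(\rho^i,\momentum^i)$, $n\le|E|$, $\rho^i=\d t\otimes\delta_{\gamma_i(t)}$, $\momentum^i=\dot\gamma_i\rho^i$, $\gamma_i\in H^1((0,T))^d$ with $\gamma_i(t)\in\dom$. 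By the previous paragraph this is simultaneously a minimizer of $J^{E,q}$, proving the claim. I expect the truncation and the verification that it does not alter the minimizers to be the only genuinely delicate point, the rest being a direct identification of terms backed by the earlier boundedness, coercivity, and density lemmas.
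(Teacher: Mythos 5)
Your proposal is correct and follows the same skeleton as the paper's proof: both cast $J^{E,q}$ in the form $F(B\rho)+\alpha\norm{\rho}+\beta\BBEnergy(\rho,\momentum)$ with the identical choices $H=\R^{|E|}$, $B\rho=\left(\RNderivative{\Aq{q}\rho}{\nu}(t_k,a_k,b_k)\right)_k$, $F(y)=-\sum_k\log y_k$, $\alpha=p^\sct+p^\dt$, reduce to $T=1$ by rescaling, and then invoke \cref{Thm_Bredies_characterization}. The genuine difference is how the failing hypothesis (boundedness of $F$ from below) is handled. The paper argues in a grey-box fashion: it observes that this hypothesis is used in the proof of \cref{Thm_Bredies_characterization} only to establish existence of minimizers, which is already supplied by \cref{thm:ExistenceMinimizers}, so the structural conclusion survives. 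You instead keep the cited theorem as a black box and truncate, replacing $F$ by $\tilde F(y)=\sum_k\max\{-\log y_k,-\log(CM)\}$ with $M$ chosen via \cref{thm:coercivity} and \cref{LemmmaBoundednessForwardOp} so that $\tilde J$ dominates $J^{E,q}$ everywhere and agrees with it on the ball containing all minimizers; your chain of inequalities showing that the structured minimizer of $\tilde J$ also minimizes $J^{E,q}$ is sound. This buys independence from the internals of the cited proof at the cost of some (correctly executed) truncation bookkeeping. A second, smaller divergence: you verify the continuity hypothesis on $B$ directly from \cref{LemmaForwardDensitySplitting}, showing each component is a weak-* continuous linear functional of a single time slice; this is precisely the alternative the paper relegates to a parenthetical remark, its main line instead citing \cref{LemmaContinuityForwardOp} (continuity only along bounded-energy sequences, again justified by inspecting the cited proof). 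Note only that your explicit formula is for the continuous setting; in the discrete setting a component of $B\rho$ integrates over a whole time bin rather than evaluating a single slice, so one extra dominated-convergence step is needed there -- a minor point on which the paper's own proof is equally terse.
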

\begin{proof}
By \cref{thm:scaleInvariance} it suffices to consider the case $T=1$.
Furthermore note that $\norm{A\rho}=(p^\sct+p^\dt)\norm{\rho}$ so that
\begin{multline*}
J^{E,q}(\rho,\momentum)=F(B\rho)+\alpha\norm\rho+\beta\BBEnergy(\rho,\momentum)\\
\quad\text{for }
\alpha=p^\sct+p^\dt,\quad
B\rho=\left(\RNderivative{\Aq{q}\rho}{\nu}(t,a,b)\right)_{(t,a,b)\in E}\in\R^{|E|},\quad
F(v)=-\sum_{k=1}^{|E|}\log(v_i).
\end{multline*}
By \cref{LemmaContinuityForwardOp}, the operator $B$ satisfies the condition of \cref{Thm_Bredies_characterization}.
(Note that in \cref{LemmaContinuityForwardOp} we actually only prove the required continuity of $B$
along sequences $(\rho^n,\momentum^n)$ with uniformly bounded $\BBEnergy(\rho^n,\momentum^n)$, however,
only this is needed in the proof of \cref{Thm_Bredies_characterization}.
Along general sequences the desired continuity of $B$ can readily be derived from \cref{LemmaForwardDensitySplitting}.)
Finally, $F$ satisfies the properties of \cref{Thm_Bredies_characterization} except for the boundedness from below,
however, that property is only used in the proof of \cref{Thm_Bredies_characterization} to show existence of minimizers, which we already have by \cref{thm:ExistenceMinimizers}.
Therefore, the structure of the minimizer follows from \cref{Thm_Bredies_characterization}.
\end{proof}

\subsection{Functional lifting and convex relaxation}\label{sec:lifting}

In \cite{ScScWi20} we already illustrated the close relation of our reconstruction model to an approach from \cite{Lee2015} for tracking a single radioactively labelled cell.
Here we briefly motivate our model as a convex relaxation of a multiparticle tracking version of \cite{Lee2015}.
To this end suppose we already know there are $n$ distinct radiolabelled travelling particles which are to be reconstructed from the PET measurement $E=(t_k,a_k,b_k)_{k=1,\ldots,K}$.
Those particles can be described by their mass $m_i>0$ and their trajectory $\gamma_i:[0,T]\to\dom$, $i=1,\ldots,n$.
The spatiotemporal radioactive material distribution and momentum is then given by
\begin{equation*}
\rho=\sum_{i=1}^nm_i\d t\otimes\delta_{\gamma_i(t)},\qquad
\momentum=\sum_{i=1}^nm_i\d t\otimes\dot\gamma_i(t)\delta_{\gamma_i(t)}.
\end{equation*}
Abbreviating by $L_k=a_k+\R(b_k-a_k)$ the line of response associated with the $k$th photon pair detection,
our functional (for simplicity assuming zero scatter probability $p^\sct=0$) applied to this linear combination of travelling Dirac masses becomes
\begin{equation*}
J^{E,q}(\rho,\momentum)
=p^\dt\sum_{i=1}^nm_i-\sum_{k=1}^K\log(p^\dt(G*\rho)(L_k))+\beta\sum_{i=1}^nm_i\int_0^T|\dot\gamma_i|^2\,\d t.
\end{equation*}
If the positron range kernel is taken as a Gaussian $G(x)=\exp(-x^2/2\sigma^2)/(2\pi)^{3/2}\sigma^3$ of variance $\sigma^2$, this turns into
\begin{align}\label{eqn:discreteModel}
&J^{E,q}(\rho,\momentum)\nonumber\\
=&p^\dt\sum_{i=1}^nm_i-\sum_{k=1}^K\log\left(\sum_{i=1}^nm_i\exp\left(-\frac{\mathrm{dist}(\gamma_i(t_k),L_k)^2}{2\sigma^2}\right)\right)
+\beta\sum_{i=1}^nm_i\int_0^T|\dot\gamma_i|^2\,\d t-K\log(p^\dt/((2\pi)^{3/2}\sigma^3)).
\end{align}
For a single particle, $n=1$, of unit mass $m_1=1$, this becomes
\begin{equation*}
J^{E,q}(\rho,\momentum)
=\sum_{k=1}^K\frac{\mathrm{dist}(\gamma_1(t_k),L_k)^2}{2\sigma^2}
+\beta\int_0^T|\dot\gamma_1|^2\,\d t+R,
\end{equation*}
where the remainder $R$ is independent of the optimization variables $\rho,\momentum$.
Optimizing this functional for the curve $\gamma_1$ is a convex optimization problem, whose cubic spline discretization was considered in \cite{Lee2015}.
However, for $n>1$, the more general functional \eqref{eqn:discreteModel} with $n$ travelling particles is highly nonconvex due to the second term which represents the de facto combinatorial problem of identifying which particle trajectory $\gamma_i$ most probably caused the detection event $(t_k,a_k,b_k)\in E$.
Minimizing \eqref{eqn:discreteModel} yields the MAP estimate among all configurations of $n$ travelling particles
(indeed, our modelling in \cref{sec:reconstructionFunctional} under this constraint would lead to exactly \eqref{eqn:discreteModel}).
Viewing this functional on $n$ paths as a restriction of the functional $J^{E,q}$, which operates on paths of measures, is sometimes called a \emph{functional lifting} into a higher-dimensional space:
Instead of optimizing over three-dimensional particle positions, one then optimizes over the empirical measures $\rho_t$, that is, measures on $\R^3$ that describe the particle configuration.
Then dropping the (nonconvex) constraint that the measures $(\rho,\momentum)$ need to represent exactly $n$ particles with nonchanging mass leads to the convex optimization functional $J^{E,q}$,
so one can view $J^{E,q}$ as a convex relaxation of the model \eqref{eqn:discreteModel}.
For fixed $n$ this relaxation is not tight, since $J^{E,q}$ is oblivious to $n$. So the relaxation cannot be used to restrict reconstruction to a fixed number of particles, as might be motivated by prior knowledge.
However, if $n$ is unknown and no prior knowledge is available, i.e.~if one wants to minimize \eqref{eqn:discreteModel} also in $n$, then the relaxation does indeed become tight as can be seen from \cref{thm:minimizerStructure}, which guarantees the existence of minimizers that are composed from a finite number of discrete particles.

\section*{Acknowledgements}
MM's and BW's work was supported by the Deutsche Forschungsgemeinschaft (DFG, German Research Foundation)
under Germany's Excellence Strategy -- EXC 2044 --, Mathematics M\"unster: Dynamics -- Geometry -- Structure,
and under the Collaborative Research Centre 1450--431460824, InSight, University of M\"unster.\\
BS was supported by the Emmy Noether Programme of the DFG (project
number 403056140)

\begin{appendices}
	\appendix
	\section{Measurability of reconstruction mapping}\label{sec:measurability}
	Here we briefly show the measurability of the map $\reconMap^{q,\beta,\halflife,T,\dom}$ from \cref{sec:scaling}.
	This first requires to properly specify the domain and codomain as measurable spaces.
	Since the domain consists of realizations of a point process and the codomain consists of weakly-* compact subsets of a space of Radon measures,
	we first recapitulate the basics of point processes (following \cite{bookPPP_last_penrose}) and of spaces of sets (following \cite{KuratowskiTopology, InfDimAnalysis_Aliprantis,Michael1951TopologiesOS}).
	
	Let $(Z,\mathcal{Z})$ be a measurable space and abbreviate $\N_0=\{0,1,2,\ldots\}$.
	A point processes on $Z$ can be seen as random countable subsets of $Z$ or equivalently as random $(\N_0\cup\{\infty\})$-valued measures:
	Let $N_{<\infty}$ denote the space of all measures $\mu$ on $Z$ satisfying $\mu(B)\in\N_0$ for all $B\in\mathcal{Z}$,
	and let ${N}$ be the space of all measures that can be written as a countable sum of measures from ${N}_{<\infty}$.
	Let further $\bm{\mathcal{N}}$ denote the $\sigma$-algebra generated by the collection of all subsets of ${N}$ having the form
	\begin{align*}
	\left\{\mu\in{N} \ | \ \mu(B)=k\right\}\quad\text{for some } B\in\mathcal{Z}, k\in\mathbb{N}_0.
	\end{align*}
	Thus ${\mathcal N}$ is the smallest $\sigma$-algebra on ${N}$ such that $\mu \mapsto\mu(B)$ is measurable for all $B\in\mathcal{Z}$.
	Equivalently, ${\mathcal N}$ is generated by the integration maps $\pi_f:\mu\mapsto\int f\d\mu$ for $f:Z\to\R$ a nonnegative measurable function,
	\begin{equation*}
	{\mathcal N}=\sigma\{ \pi_f^{-1}(B) \ | \ f:Z\to\R\text{ nonnegative and measurable, } B\subset\R\text{ measurable} \}.
	\end{equation*}
	A \emph{point process} on $Z$ is an $({N},{\mathcal N})$-valued random variable.
	
	Next we discuss topological spaces of sets (which automatically turn into measurable spaces when equipped with the Borel $\sigma$-algebra).
	Given a topological $T_1$ space $Y$, its so-called hyperspace $2^Y$ is the set of all non-empty closed subsets $C\subset Y$.
	We endow it with the so-called Vietoris or exponential topology,
	the coarsest topology in which the sets $2^A$ are open in $2^Y$ for $A$ open in $Y$ and closed for $A$ closed in $Y$ \cite[Ch.\,17]{KuratowskiTopology},
	where $2^A$ for $A\subset Y$ denotes all subsets of $A$ that are closed in $Y$.
	Note, that the hyperspace $2^Y$ is in general more suitable for an analysis than the power set of all non-empty subsets of $Y$
		as the latter has poor separation properties and fails to be $T_1$ for general topological spaces \cite{Michael1951TopologiesOS}. In more detail: The sets
		\begin{align}\label{eqn:basisExponentialTopology}
		\langle U_1, \ldots, U_n\rangle=\left\{ E\in 2^Y \ \middle| \ E\subset\bigcup_{i=1}^n U_i, \ E\cap U_i\neq\emptyset \right\}
		\end{align}
		for $U_i$ open in $Y$ form a basis and generate the exponential topology \cite[Def.\,1.7, 1.6a]{Michael1951TopologiesOS} (the author calls it finite topology). If one generalizes this to the power set $\mathrm{Power}(Y)$ of all non-empty subsets of $Y$, i.e.\ if one considers the topology generated by
		\begin{align*}
		\langle U_1, \ldots, U_n\rangle^+=\left\{ E\in \mathrm{Power}(Y)\setminus\emptyset \ \middle| \ E\subset\bigcup_{i=1}^n U_i, \ E\cap U_i\neq\emptyset \right\}
		\end{align*}
		for $U_i$ open in $Y$, then this topology fails to be $T_1$ because given a set $E\subset Y$,  any neighbourhood of $\overline{E}$ contains $E$ (if one interprets these sets as elements of the power set).

	We further denote by $\mathcal K(Y)$ the set of all non-empty compact subsets of $Y$.
	If $Y$ is metrizable, then the Hausdorff metric can be defined on $\mathcal K(Y)$,
	and the (relative) exponential topology coincides on $\mathcal K(Y)$ with the Hausdorff metric topology \cite[Thm. 3.91]{InfDimAnalysis_Aliprantis}. If $Y$ is a locally compact $T_1$ space, then $\mathcal K(Y)$ is open in $2^Y$ \cite[Prop.\,4.4]{Michael1951TopologiesOS}.
	Let us furthermore note that for closed $G\subset2^Y$ and closed $C\subset Y$, the set $G\cap2^C$ is closed in the hyperspace $2^C$ with its exponential topology.
	Indeed, since the family of sets $\{2^A\ |\ A\subset Y \text{ open}\}\cup\{2^Y\setminus 2^A\ |\ A\subset Y \text{ closed}\}$ are a subbase for the exponential topology on $2^Y$
	, the set $G$ can be written as
	\begin{align*}
	G=\bigcap_{i\in I_1}\bigcup_{j\in I_2}\left(2^{G_{ij}}\cup 2^{Y}\setminus 2^{\tilde G_{ij}} \right)
	\end{align*}
	for open sets $\tilde G_{ij}$, closed sets $G_{ij}$, an arbitrary index set $I_1$, and a finite index set $I_2$. It follows
	\begin{align*}
	G\cap 2^C
	=\bigcap_{i\in I_1}\bigcup_{j\in I_2}2^C\cap\left(2^{G_{ij}}\cup 2^{Y}\setminus 2^{\tilde G_{ij}} \right)
	= \bigcap_{i\in I_1}\bigcup_{j\in I_2}\left( 2^{G_{ij}\cap C}\cup 2^C\setminus 2^{\tilde G_{ij}\cap C} \right),
	\end{align*}
	which is of the same form as $G$ except that the open and closed sets are now relative to $2^C$. Hence, $G\cap 2^C$ is closed in $2^C$.
	
	Finally we consider general setvalued maps (such as $\reconMap^{q,\beta,\halflife,T,\dom}$) and their measurability properties.
	A multivalued function $\varphi$ from a domain $X$ to a codomain $Y$ assigns to each argument from $X$ a subset of $Y$.
	Using the notation from \cite[\S\,18]{InfDimAnalysis_Aliprantis} we call $\varphi$ a correspondence and write $\varphi:X\twoheadrightarrow Y$. One major difference between functions and correspondences is that for the latter multiple different notions of measurability exist. If $(X,\mathcal X)$ is a measurable space and $Y$ a topological space, then $\varphi$ is called 
	\begin{itemize}
		\item \emph{weakly measurable}, if  $\left\{x\in X \ \middle| \ \varphi(x)\cap O\neq\emptyset \right\}\in\mathcal X$ for all open sets $O\subset Y$;
		\item \emph{measurable}, if  $\left\{x\in X \ \middle| \ \varphi(x)\cap F\neq\emptyset \right\}\in\mathcal X$ for all closed sets $F\subset Y$;
		\item \emph{Borel measurable}, if $\left\{x\in X \ \middle| \ \varphi(x)\cap B\neq\emptyset \right\}\in\mathcal X$ for all Borel subsets $B\subset Y$.
	\end{itemize}

	If $Y$ is separable and metrizable and $\varphi$ maps into $\mathcal K(Y)$, measurability of $\varphi$ can be reduced to weak measurability as follows.
	
	\begin{theorem}[{\cite[Thm.\,18.10]{InfDimAnalysis_Aliprantis}}]\label{ThmBorelMeasCompactSubspaces}
		Let $(X,\mathcal X)$ be a measurable space and $Y$ be a separable metrizable space.
		For $\varphi:X\twoheadrightarrow Y$ with values in $\mathcal K(Y)$ the following statements are equivalent:
		\begin{enumerate}
			\item The correspondence $\varphi$ is weakly measurable;
			\item The correspondence $\varphi$ is measurable;
			\item The correspondence $\varphi$ is Borel measurable as a map $\varphi:(X,\mathcal X)\to\mathcal K(Y)$ with the Hausdorff metric topology.
		\end{enumerate}
	\end{theorem}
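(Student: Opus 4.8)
The plan is to prove the three-way equivalence by first reducing it to the two elementary implications $(2)\Rightarrow(1)$ and $(1)\Rightarrow(2)$, and then matching the defining sets of weak measurability and measurability against a countable family of generators of the Borel $\sigma$-algebra on $\mathcal K(Y)$. Throughout I would fix a compatible metric $d$ on $Y$ and write $d(y,S)=\inf_{s\in S}d(y,s)$ for $S\subset Y$.

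For $(2)\Rightarrow(1)$ I would use that in a metrizable space every open set is an $F_\sigma$: given open $O\subset Y$, set $F_n=\{y\,|\,d(y,Y\setminus O)\ge 1/n\}$, so that each $F_n$ is closed and $O=\bigcup_n F_n$. Then $\{x\,|\,\varphi(x)\cap O\neq\emptyset\}=\bigcup_n\{x\,|\,\varphi(x)\cap F_n\neq\emptyset\}$ is a countable union of members of $\mathcal X$. This direction uses neither separability nor compactness. The converse $(1)\Rightarrow(2)$ is the one place where compactness of the values is essential, and I expect it to be the crux of the argument. Given a closed set $F$, put $G_n=\{y\,|\,d(y,F)<1/n\}$, an open neighbourhood of $F$ with $\bigcap_n G_n=F$. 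The key observation is that for a \emph{compact} set $K$ one has $K\cap F\neq\emptyset$ if and only if $K\cap G_n\neq\emptyset$ for all $n$: the forward implication is trivial, and for the reverse one picks $y_n\in K\cap G_n$, extracts a subsequence $y_{n_k}\to y\in K$ by compactness, and notes that $d(y,F)=0$ forces $y\in F$ since $F$ is closed. Applying this with $K=\varphi(x)$ yields $\{x\,|\,\varphi(x)\cap F\neq\emptyset\}=\bigcap_n\{x\,|\,\varphi(x)\cap G_n\neq\emptyset\}$, a countable intersection of sets that lie in $\mathcal X$ by weak measurability. Hence $(1)\Leftrightarrow(2)$.

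It remains to tie these to $(3)$. Since $Y$ is separable and metrizable, so is $\mathcal K(Y)$ under the Hausdorff metric, which on $\mathcal K(Y)$ coincides with the exponential (Vietoris) topology; being second countable, its Borel $\sigma$-algebra is generated by the countable base of sets $\langle U_1,\ldots,U_m\rangle$ with the $U_i$ drawn from a fixed countable base of $Y$. Thus $(3)$ is equivalent to measurability of all preimages $\varphi^{-1}(\langle U_1,\ldots,U_m\rangle)$, and I would decompose
\[
\varphi^{-1}(\langle U_1,\ldots,U_m\rangle)
=\Big\{x\,\Big|\,\varphi(x)\subset\textstyle\bigcup_{i}U_i\Big\}\cap\bigcap_{i=1}^{m}\{x\,|\,\varphi(x)\cap U_i\neq\emptyset\}.
\]
The intersection over $i$ lies in $\mathcal X$ by weak measurability $(1)$, while the first factor is the complement of $\{x\,|\,\varphi(x)\cap(\bigcup_i U_i)^c\neq\emptyset\}$ and hence lies in $\mathcal X$ by measurability $(2)$, because $(\bigcup_i U_i)^c$ is closed. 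This gives $(1)\,\&\,(2)\Rightarrow(3)$. For the reverse direction I would observe that the lower set $\{K\,|\,K\cap O\neq\emptyset\}$ is open, and $\{K\,|\,K\cap F\neq\emptyset\}=\mathcal K(Y)\setminus\{K\,|\,K\subset Y\setminus F\}$ is closed, in $\mathcal K(Y)$; both are therefore Borel, so $(3)$ immediately forces their $\varphi$-preimages, which are exactly the defining sets of $(1)$ and $(2)$, to be measurable.

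The main obstacle is the implication $(1)\Rightarrow(2)$: upgrading ``hits every open set'' to ``hits every closed set'' is false for general closed-valued correspondences and hinges entirely on the compactness hypothesis $\varphi(x)\in\mathcal K(Y)$ through the sequential-compactness argument above. A secondary, purely bookkeeping point is to replace the uncountable families of Vietoris generators by countable ones, which is legitimate precisely because separability of $Y$ supplies a countable base and thereby keeps all the unions and intersections above countable, hence inside $\mathcal X$.
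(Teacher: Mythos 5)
This theorem is quoted verbatim from Aliprantis--Border (Thm.\ 18.10) and the paper supplies no proof of its own, so there is nothing internal to compare against; your blind proof is correct and follows what is essentially the standard textbook route. All the key points are in place: $(2)\Rightarrow(1)$ via open sets being $F_\sigma$ in a metrizable space, $(1)\Rightarrow(2)$ via the compactness of the values (a compact $K$ meets the closed set $F$ iff it meets every $1/n$-neighbourhood $G_n$ of $F$, by extracting a convergent subsequence), and the equivalence with $(3)$ via the countable Vietoris base of $\mathcal K(Y)$ together with the coincidence of the Vietoris and Hausdorff-metric topologies there --- which is exactly where separability of $Y$ is needed and where the hit-sets $\{K\,|\,K\cap O\neq\emptyset\}$ (open) and $\{K\,|\,K\cap F\neq\emptyset\}$ (closed) recover $(1)$ and $(2)$ from $(3)$.
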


	\begin{remark}[Correspondences allowing the empty set]\label{RemarkBorelFieldEmptySet}
			We would like to also allow the empty set as a value of a correspondence. Denote by $\tau$ the exponential topology on $2^Y$.
			Following \cite[\S\,17]{KuratowskiTopology}, \cite{Michael1951TopologiesOS}, one can extend $\tau$ to a topology
			\begin{align*}
			\tau_\emptyset = \tau\cup\{B\cup\{\emptyset\} \ | \ B\in\tau\}
			\end{align*}
			on $2^Y_\emptyset=2^Y\cup\{\emptyset\}$, a basis of which is obviously given by \eqref{eqn:basisExponentialTopology} and the unions of \eqref{eqn:basisExponentialTopology} with $\{\emptyset\}$.
			The Borel $\sigma$-algebra $\mathcal{B}(\tau_\emptyset)$ generated by $\tau_\emptyset$ is then given by
			\begin{align*}
			\mathcal{B}(\tau_\emptyset)=\mathcal{B}(\tau)\cup\{B\cup\{\emptyset\} \ | \ B\in \mathcal{B}(\tau) \}.
			\end{align*}
			In essence, this allows us to prove measurability of a function with codomain $2^Y_\emptyset$ by considering preimages of sets $O\in\tau$ and of $\{\emptyset\}$ separately. The same holds for preimages of closed sets.
	\end{remark}
	
	For a metric space $(Y,d)$, weak measurability of $\varphi$ can in turn be reduced to a measurability and continuity condition on the associated distance function
	\begin{equation*}
	\delta_\varphi:X\times Y\to\R,\quad
	(x,y)\mapsto d(y,\varphi(x)).
	\end{equation*}
	To state this condition recall that a map $f:X\times Y\to\R$ is Carath\'eodory
	if $x\mapsto f(x,y)$ is measurable for every fixed $y$ and $y\mapsto f(x,y)$ is continuous for every fixed $x$.
	
	\begin{theorem}[{\cite[Thm.\,18.5]{InfDimAnalysis_Aliprantis}}]\label{thm:weakMeasurability}
		A non-empty-valued correspondence mapping a measurable space into a separable metrizable space is weakly measurable if and only if its associated distance function is Carath\'eodory.
	\end{theorem}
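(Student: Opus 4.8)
The plan is to reduce the entire equivalence to a single bridging identity. For every fixed $y \in Y$ and every $r > 0$,
\[
\{x \in X \mid d(y,\varphi(x)) < r\} = \{x \in X \mid \varphi(x) \cap B(y,r) \neq \emptyset\},
\]
where $B(y,r)$ denotes the open ball; this holds because $d(y,\varphi(x)) = \inf_{z \in \varphi(x)} d(y,z) < r$ exactly when some point of $\varphi(x)$ lies in $B(y,r)$, the non-emptiness of $\varphi(x)$ ensuring the infimum is over a non-empty set and hence finite. Before using it, I would dispose of the continuity half of the Carath\'eodory condition for free: for fixed $x$ the map $y \mapsto d(y,\varphi(x))$ is $1$-Lipschitz, hence continuous. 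Thus $\delta_\varphi$ is Carath\'eodory if and only if $x \mapsto d(y,\varphi(x))$ is measurable for each fixed $y$, and the claim becomes the equivalence of weak measurability of $\varphi$ with measurability of all these distance slices.

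For \emph{weak measurability $\Rightarrow$ Carath\'eodory} I would fix $y$ and apply weak measurability to the open set $B(y,r)$: the right-hand side of the bridging identity is then measurable for every $r > 0$, so by the identity the sublevel sets $\{x \mid d(y,\varphi(x)) < r\}$ are measurable; since these generate the Borel structure on $\R$, the slice $x \mapsto d(y,\varphi(x))$ is measurable, and together with the automatic continuity in $y$ this makes $\delta_\varphi$ Carath\'eodory.

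For the converse \emph{Carath\'eodory $\Rightarrow$ weak measurability}, separability enters and is, I expect, the only substantive point. Since $Y$ is separable metrizable it is second countable, so with $\{y_n\}$ a countable dense set and $r_m$ ranging over the positive rationals the balls $B(y_n,r_m)$ form a countable base. Given an open $O \subset Y$ I would write $O = \bigcup_k B_k$ as a countable union of such basic balls, whence $\varphi(x) \cap O \neq \emptyset$ iff $\varphi(x) \cap B_k \neq \emptyset$ for some $k$ and
\[
\{x \mid \varphi(x) \cap O \neq \emptyset\} = \bigcup_k \{x \mid \varphi(x) \cap B_k \neq \emptyset\}.
\]
By the bridging identity each term is a sublevel set of some distance slice, hence measurable by the assumed Carath\'eodory property, and a countable union of measurable sets is measurable. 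The crux of the whole proof is precisely this reduction from an arbitrary open set to a countable base of balls: second countability (equivalently, separability in the metric setting) is exactly what keeps the union countable, and without it measurability would fail. Everything else is bookkeeping around the single identity linking sublevel sets of the distance function to intersections with open balls.
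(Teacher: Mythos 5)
Your proof is correct and is essentially the standard argument for this result: the paper itself states it as an imported theorem, citing \cite[Thm.\,18.5]{InfDimAnalysis_Aliprantis} without proof, and your bridging identity $\{x \mid d(y,\varphi(x))<r\}=\{x \mid \varphi(x)\cap B(y,r)\neq\emptyset\}$ together with the reduction of an arbitrary open set to a countable base of balls is exactly the argument given in that reference. The two points you flag as essential --- non-emptiness of $\varphi(x)$ (so the infimum is attained below $r$ by an actual point) and separability (so the union over basic balls is countable) --- are indeed the only places where the hypotheses are used.
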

	
	With this preparation we can now state the desired measurability result.
	Below we abbreviate $(N,{\mathcal N})$ to be the codomain of point processes on $Z=[0,T]\times\boundDomDelta$
	and $Y=\measp(\spacetime)\times\meas(\spacetime)^3$ equipped with the weak-* topology.
	
	\begin{theorem}[Measurability of reconstruction mapping]
		The map $\varphi=\reconMap^{q,\beta,\halflife,T,\dom}$ is measurable from $({N},{\mathcal N})$ into $2^Y_\emptyset$ with the exponential topology.
	\end{theorem}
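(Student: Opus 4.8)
The plan is to reduce the measurability of $\varphi=\reconMap^{q,\beta,\halflife,T,\dom}$ to a statement on the strata on which the number of measurement points is fixed. By \cref{RemarkBorelFieldEmptySet} it suffices to show that $\varphi^{-1}(\{\emptyset\})\in\mathcal N$ and that $\varphi^{-1}(G)\in\mathcal N$ for every $G$ closed in the exponential topology $\tau$ on $2^Y$. The first set is exactly $\{E\in N\mid|E|=\infty\}$, which lies in $\mathcal N$ since $E\mapsto E(Z)$ is $\mathcal N$-measurable. For the second, I decompose $\{|E|<\infty\}=\bigsqcup_{k\in\N}N_k$ into the measurable strata $N_k=\{E\in N\mid|E|=k\}$. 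On $N_k$ the coercivity estimate \cref{thm:coercivity} together with the bound on the infimum \cref{thm:boundednessInfimum} yields a radius $R_k$, depending only on $k$ and the fixed parameters, such that every minimizer $(\rho,\momentum)$ of $J^{E,q}$ with $E\in N_k$ satisfies $\norm\rho,\norm\momentum\le R_k$. Hence $\varphi(E)\subset Y_k:=\{(\rho,\momentum)\in Y\mid\norm\rho,\norm\momentum\le R_k\}$, which by Banach--Alaoglu is weak-* compact and, being norm bounded in the dual of a separable space, metrizable; thus $Y_k$ is a compact (in particular separable) metrizable subspace of $Y$ that is weak-* closed in $Y$.

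The heart of the argument is to show that $\varphi$ restricted to $N_k$ is a non-empty, compact-valued correspondence into $Y_k$ whose graph is closed. Non-emptiness and compactness of the values are precisely \cref{thm:ExistenceMinimizers}. For the closed graph I argue as follows: if $E_n\to E$ in $N_k$ (equivalently $E_n\weakstarto E$ with $|E_n|=|E|=k$) and $(\rho_n,\momentum_n)\in\varphi(E_n)$ with $(\rho_n,\momentum_n)\weakstarto(\rho,\momentum)$, then \cref{thm:GammaConvergence} guarantees that a subsequence of the minimizers $(\rho_n,\momentum_n)$ converges weakly-* to a minimizer of $J^{E,q}$; since the full sequence already converges to $(\rho,\momentum)$, the limit is a minimizer, i.e. $(\rho,\momentum)\in\varphi(E)$. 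As $N_k$ (with the relative vague topology) and $Y_k$ are metrizable, this sequential closedness is closedness. A closed graph in $N_k\times Y_k$ together with compactness of $Y_k$ implies, via a routine subsequence argument, that $\{E\in N_k\mid\varphi(E)\cap F\ne\emptyset\}$ is closed for every closed $F\subset Y_k$, hence Borel, and thus lies in the trace $\sigma$-algebra $\mathcal N\cap N_k$ (using that on $N_k$ the trace of $\mathcal N$ is the Borel $\sigma$-algebra of the relative vague topology). Therefore $\varphi|_{N_k}$ is a measurable correspondence in the sense of \cref{sec:measurability}, and \cref{ThmBorelMeasCompactSubspaces} upgrades this to Borel measurability of $\varphi|_{N_k}\colon N_k\to\mathcal K(Y_k)$ equipped with the Hausdorff metric.

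It remains to transfer this to measurability into $2^Y_\emptyset$ with the exponential topology. Fix a $\tau$-closed $G\subset2^Y$. For $E\in N_k$ the value $\varphi(E)$ is a closed subset of the compact set $Y_k$, so $\varphi(E)\in G$ if and only if $\varphi(E)\in G\cap2^{Y_k}$. Since $Y_k$ is closed in $Y$, the closedness lemma from \cref{sec:measurability} shows that $G\cap2^{Y_k}$ is closed in $2^{Y_k}$, and on the compact metrizable space $Y_k$ the exponential topology on $2^{Y_k}=\mathcal K(Y_k)$ coincides with the Hausdorff metric topology \cite[Thm.\,3.91]{InfDimAnalysis_Aliprantis}. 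Hence $G\cap2^{Y_k}$ is closed in $\mathcal K(Y_k)$, and by the Borel measurability established above its preimage under $\varphi|_{N_k}$ lies in $\mathcal N$. Taking the countable union over $k$ yields $\varphi^{-1}(G)=\bigcup_{k}(\varphi|_{N_k})^{-1}(G\cap2^{Y_k})\in\mathcal N$, which together with the measurability of $\varphi^{-1}(\{\emptyset\})$ and \cref{RemarkBorelFieldEmptySet} proves the claim. I expect the main obstacle to be the global non-metrizability of $Y$ under the weak-* topology, which is what forces the stratification by the random number of points and the use of coercivity to confine each stratum's reconstructions to a compact metrizable ball; the other delicate points are establishing the closed-graph property, which is exactly where \cref{thm:GammaConvergence} enters, and the careful passage between Hausdorff-metric and exponential-topology measurability via the closedness lemma.
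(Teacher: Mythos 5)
Your proposal is correct and follows the same overall architecture as the paper's proof: stratify $N$ by the number $|E|$ of detections, use \cref{thm:coercivity} and \cref{thm:boundednessInfimum} to confine the minimizers over each stratum to a norm ball $Y_k$ that is weak-* compact and metrizable, establish measurability of the correspondence $\varphi|_{N_k}$ into $\mathcal K(Y_k)$, and then transfer to the exponential topology on $2^Y_\emptyset$ via the closedness of $G\cap 2^{Y_k}$ and the identification of the exponential and Hausdorff-metric topologies on $\mathcal K(Y_k)$, treating $\{\emptyset\}=\varphi(N_\infty)$ separately. Where you genuinely diverge is the central step on each stratum: the paper verifies \emph{weak} measurability by showing that the distance function $\delta_{\varphi_l}(E,(\rho,\momentum))=d((\rho,\momentum),\varphi_l(E))$ is Carath\'eodory (continuous in $(\rho,\momentum)$ by compactness of the minimizer set, lower semicontinuous hence measurable in $E$ by \cref{thm:GammaConvergence}) and then invokes \cref{thm:weakMeasurability}; you instead prove that $\varphi|_{N_k}$ has closed graph in $N_k\times Y_k$ (again via \cref{thm:GammaConvergence}) and combine this with compactness of $Y_k$ to conclude directly that $\{E\mid\varphi(E)\cap F\neq\emptyset\}$ is closed for closed $F$, i.e.\ you establish \emph{measurability} in the stronger sense without passing through the distance function. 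Both routes feed the same inputs (\cref{thm:ExistenceMinimizers}, \cref{thm:GammaConvergence}) into \cref{ThmBorelMeasCompactSubspaces}; yours is arguably slightly more streamlined since the closed-graph-plus-compact-range argument is a one-line subsequence extraction and avoids checking the two halves of the Carath\'eodory property separately, while the paper's distance-function route is the more standard template from \cite{InfDimAnalysis_Aliprantis} and would survive without compactness of the values being used twice. One small imprecision: you assert that the trace of $\mathcal N$ on $N_k$ \emph{is} the Borel $\sigma$-algebra of the relative weak-* topology, whereas only the inclusion of the weak-* Borel sets in the trace $\sigma$-algebra is needed (and is what the paper justifies via the generating maps $E\mapsto\int u\,\d E$ for continuous $u\geq0$); this does not affect the argument.
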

	\begin{proof}
		We stratify the space ${N}$ by setting ${N}_l=\{E\in{N}\,|\,|E|=l\}$ for $l\in\N_0\cup\{\infty\}$ (note that the ${N}_l$ are measurable).
		Consider first $l<\infty$. By \cref{thm:coercivity,thm:boundednessInfimum} there exists some $C>0$ such that any minimizer $(\rho,\momentum)$ of $J^{E,q}$ satisfies $\norm{\rho}+\norm{\momentum}<C(1+|E|)$.
		Thus, if we set $Y_l=\{(\rho,\momentum)\in Y\,|\,\norm{\rho}+\norm{\momentum}\leq C(1+l)\}\subset Y$, the restriction $\varphi_l$ of $\varphi$ to ${N}_l$ is a correspondence
		\begin{equation*}
		\varphi_l:{N}_l\twoheadrightarrow Y_l.
		\end{equation*}
		We now show that the distance function $\delta_{\varphi_l}$ associated with $\varphi_l$ is Carath\'eodory,
		then \cref{thm:weakMeasurability,ThmBorelMeasCompactSubspaces} imply the measurability of $\varphi_l$ with respect to the trace $\sigma$-algebra of ${\mathcal N}$
		since $Y_l$ (as a norm-ball of the dual to a separable Banach space) is separable and metrizable with respect to the weak-* topology.
		We denote the metric on $Y_l$ by $d$.
		We first show continuity in the second argument of $\delta_{\varphi_l}$:
		Fix $E\in{N}_l$ and let $(\rho_n,\momentum_n)\weakstarto(\rho,\momentum)$ in $Y_l$.
		By the weak-* compactness of $\varphi_l(E)=\varphi(E)$ from \cref{thm:ExistenceMinimizers} there is a sequence $(\tilde\rho_n,\tilde\momentum_n)\in\varphi_l(E)$
		with $\delta_{\varphi_l}(E,(\rho_n,\momentum_n))=d((\rho_n,\momentum_n),(\tilde\rho_n,\tilde\momentum_n))$.
		Furthermore, up to a subsequence (still indexed by $n$ for simplicity) we have $(\tilde\rho_n,\tilde\momentum_n)\weakstarto(\tilde\rho,\tilde\momentum)\in\varphi_l(E)$.
		Thus, for any $(\hat\rho,\hat\momentum)\in\varphi_l(E)$ we have
		\begin{equation*}
		d((\rho,\momentum),(\hat\rho,\hat\momentum))
		=\lim_{n\to\infty}d((\rho_n,\momentum_n),(\hat\rho,\hat\momentum))
		\geq\lim_{n\to\infty}\delta_{\varphi_l}(E,(\rho_n,\momentum_n))
		=\lim_{n\to\infty}d((\rho_n,\momentum_n),(\tilde\rho_n,\tilde\momentum_n))
		=d((\rho,\momentum),(\tilde\rho,\tilde\momentum))
		\end{equation*}
		so that $\delta_{\varphi_l}(E,(\rho,\momentum))=d((\rho,\momentum),(\tilde\rho,\tilde\momentum))$ and thus
		$\delta_{\varphi_l}(E,(\rho_n,\momentum_n))\to\delta_{\varphi_l}(E,(\rho,\momentum))$ for $n\to\infty$ as desired.
		We next show measurability of the map $E\mapsto\delta_{\varphi_l}(E,(\rho,\momentum))$ for fixed $(\rho,\momentum)$:
		To this end we first show sequential weak-* lower semicontinuity of that map, so let $E_n\weakstarto E$ in ${N}_l$ as $n\to\infty$
		and assume without loss of generality that $\liminf_{n\to\infty} \delta_{\varphi_l}(E_n,(\rho,\momentum))=\lim_{n\to\infty} \delta_{\varphi_l}(E_n,(\rho,\momentum))$
		(else we may pass to a subsequence).
		Since $\varphi_l(E_n)=\varphi(E_n)$ is compact by \cref{thm:ExistenceMinimizers} there exists a sequence $(\tilde\rho_n,\tilde\momentum_n)\in\varphi_l(E_n)\subset Y_l$
		such that $d((\rho,\momentum),\varphi_l(E_n))=d((\rho,\momentum),(\tilde\rho_n,\tilde\momentum_n))$.
		Then by \cref{thm:GammaConvergence} we can extract a subsequence (not relabled) such that $(\tilde\rho_n,\tilde\momentum_n)\to(\tilde\rho,\tilde\momentum)\in Y_l$ as well as $(\tilde\rho,\tilde\momentum)\in\varphi_l(E)$ and thus
		\begin{multline*}
		\delta_{\varphi_l}(E,(\rho,\momentum))
		=d((\rho,\momentum),\varphi_l(E))
		\leq d((\rho,\momentum),(\tilde\rho,\tilde\momentum))
		=\lim_{n\to\infty} d((\rho,\momentum),(\tilde\rho_n,\tilde\momentum_n))\\
		=\lim_{n\to\infty} d((\rho,\momentum),\varphi(E_n))
		=\lim_{n\to\infty} \delta_{\varphi_l}(E_n,(\rho,\momentum)),
		\end{multline*}
		proving the desired lower semi-continuity.
		This lower semicontinuity now implies measurability:
		Indeed, since the $\sigma$-algebra on ${N}_l$ is generated by the maps $E\mapsto\int f\,\d E$ for measurable $f\geq0$
		it contains the Borel $\sigma$-algebra of the weak-* topology, which is generated by the maps $E\mapsto\int u\,\d E$ for continuous functions $u\geq0$
		(we may restrict to nonnegative $u$ since ${N}_l$ only contains nonnegative measures). Applying \cref{ThmBorelMeasCompactSubspaces} we get Borel measurability of $\varphi_l$ as a mapping to $\mathcal{K}(Y_l)$.
		A consequence of this is the Borel measurability (in the sense of a map between measurable spaces, not of a correspondence) of the restriction
		\begin{equation*}
		\varphi_{<\infty}:{N}_{<\infty}\rightarrow 2^Y\subset 2^Y_\emptyset
		\end{equation*}
		of $\varphi$ to ${N}_{<\infty}$.
		Indeed, if $G\subset2^Y$ is closed in $2^Y$, then $G_l=G\cap2^{Y_l}$ is closed in $2^{Y_l}$ as we have argued in the introduction of the appendix. By compactness of $Y_l$ it holds $2^{Y_l}=\mathcal{K}(Y_l)$ and hence $G_l$ is closed in $\mathcal{K}(Y_l)$.
		The Borel measurability of $\varphi_l$ (as a map into $\mathcal{K}(Y_l)$) then implies measurability of $\varphi_l^{-1}(G_l)$ with respect to the trace $\sigma$-algebra on ${N}_l$
		and due to the measurability of ${N}_l$ also with respect to ${\mathcal N}$.
		Thus we obtain measurability of
		\begin{equation*}
		\varphi_{<\infty}^{-1}(G)
		=\bigcup_{l\in\N}\varphi_l^{-1}(G)
		=\bigcup_{l\in\N}\varphi_l^{-1}(G_l).
		\end{equation*}
		Finally, due to $\varphi^{-1}(\{\emptyset\})={N}_\infty$, all of $\varphi$ is measurable (see \cref{RemarkBorelFieldEmptySet} and note that $\{\emptyset\}$ is closed in $2^Y_\emptyset$).
	\end{proof}

\end{appendices}

\bibliographystyle{plain}
\bibliography{99_bibliography}

\begin{thebibliography}{10}

\bibitem{defVolumeMeasure}
S~Albeverio, Yu.G Kondratiev, and M~Röckner.
\newblock Analysis and geometry on configuration spaces.
\newblock {\em Journal of Functional Analysis}, 154(2):444--500, 1998.

\bibitem{InfDimAnalysis_Aliprantis}
Charalambos~D. Aliprantis and Kim~C. Border.
\newblock {\em Infinite dimensional analysis}.
\newblock Springer, Berlin, third edition, 2006.
\newblock A hitchhiker's guide.

\bibitem{AmFuPa00}
Luigi Ambrosio, Nicola Fusco, and Diego Pallara.
\newblock {\em Functions of bounded variation and free discontinuity problems}.
\newblock Oxford Mathematical Monographs. The Clarendon Press, Oxford
  University Press, New York, 2000.

\bibitem{Be21}
Jean-David Benamou.
\newblock Optimal transportation, modelling and numerical simulation.
\newblock {\em Acta Numerica}, 30:249--325, 2021.

\bibitem{BeBr00}
Jean-David Benamou and Yann Brenier.
\newblock A computational fluid mechanics solution to the {M}onge-{K}antorovich
  mass transfer problem.
\newblock {\em Numerische Mathematik}, 84(3):375--393, 2000.

\bibitem{Bredies_extremePointsBB}
Kristian Bredies, Marcello Carioni, Silvio Fanzon, and Francisco Romero.
\newblock On the extremal points of the ball of the {B}enamou--{B}renier
  energy.
\newblock {\em Bulletin of the London Mathematical Society}, 53(5):1436--1452,
  2021.

\bibitem{ChPeScVi18}
L.~Chizat, G.~Peyr\'e, B.~Schmitzer, and F.-X. Vialard.
\newblock Unbalanced optimal transport: Dynamic and {Kantorovich} formulations.
\newblock {T}o appear in J.~Funct.~Anal., arXiv:1508.05216, 2018.

\bibitem{chizat_unbalancedOT}
Lena{\"i}c Chizat.
\newblock {\em {Unbalanced Optimal Transport : Models, Numerical Methods,
  Applications}}.
\newblock Theses, {Universit{\'e} Paris sciences et lettres}, November 2017.

\bibitem{EvGa15}
Lawrence~C. Evans and Ronald~F. Gariepy.
\newblock {\em Measure theory and fine properties of functions}.
\newblock Textbooks in Mathematics. CRC Press, Boca Raton, FL, revised edition,
  2015.

\bibitem{FedererGMT}
Herbert Federer.
\newblock {\em Geometric Measure Theory}.
\newblock Springer, Berlin, Heidelberg, 1996.

\bibitem{KlenkeProbabilityTheory}
Achim Klenke.
\newblock {\em Probability Theory}.
\newblock Springer Berlin Heidelberg, 2013.

\bibitem{KuratowskiTopology}
K.~Kuratowski.
\newblock {\em Topology.}
\newblock New York: Academic Press. 2 vols, 1966.

\bibitem{bookPPP_last_penrose}
Günter Last and Mathew Penrose.
\newblock {\em Lectures on the Poisson Process}.
\newblock Institute of Mathematical Statistics Textbooks. Cambridge University
  Press, 2017.

\bibitem{Lee2015}
Keum~Sil Lee, Tae~Jin Kim, and Guillem Pratx.
\newblock {Single-Cell Tracking With PET Using a Novel Trajectory
  Reconstruction Algorithm}.
\newblock {\em IEEE Trans. Med. Imaging}, 34(4):994--1003, apr 2015.

\bibitem{Michael1951TopologiesOS}
Ernest Michael.
\newblock Topologies on spaces of subsets.
\newblock {\em Transactions of the American Mathematical Society}, 71:152--182,
  1951.

\bibitem{ReissPP}
R.-D. Reiss.
\newblock {\em A Course on Point Processes}.
\newblock 1993.

\bibitem{OTAppliedMath}
Filippo Santambrogio.
\newblock {\em Optimal Transport for Applied Mathematicians}.
\newblock Birkhäuser Basel, 2015.

\bibitem{ScScWi20}
Bernhard Schmitzer, Klaus Sch\"afers, and Benedikt Wirth.
\newblock Dynamic cell imaging in pet with optimal transport regularization.
\newblock {\em IEEE Transactions on Medical Imaging}, 39(5):1626--1635, 2020.

\end{thebibliography}
\end{document}